\newtheorem{theorem}{Theorem}[section]
\newtheorem{lemma}[theorem]{Lemma}
\newtheorem{proposition}[theorem]{Proposition}
\newtheorem{corollary}[theorem]{Corollary}
\theoremstyle{definition}
\newtheorem{definition}[theorem]{Definition}
\theoremstyle{remark}
\begin{document}

\title[Harmonic Besov Spaces with Small Exponents]{ Harmonic Besov Spaces with small exponents}

\author{\"{O}mer Faruk Do\u{g}an}
\address{Department of Mathematics,  Tek$\dot{\hbox{\i}}$rda\u{g} Namik Kemal University, Namik Kemal Mahalles$\dot{\hbox{\i}}$, Kamp\"{u}s caddes$\dot{\hbox{\i}}$ No:1,
59030 S\"{u}leymanpa\c{s}a/Tek$\dot{\hbox{\i}}$rda\u{g}, Turkey}
\email{ofdogan@nku.edu.tr}

\subjclass[2010]{Primary 31B05, 31B10; Secondary  26A33, 42B35, 46E22, 46E15}

\keywords{Harmonic Besov space, Harmonic Bloch space, Duality, Boundary growth, Atomic decomposition}

\begin{abstract}
We study harmonic Besov spaces $b^p_\alpha$  on the unit ball of $\mathbb{R}^n$, where $0<p<1$ and $\alpha\in\mathbb{R}$. We provide characterizations in terms of partial and radial derivatives and certain radial differential operators that are more compatible with reproducing
kernels of harmonic Bergman-Besov spaces. We show that the dual of harmonic Besov space $b^p_\alpha$  is weighted Bloch space $b_\beta^{\infty}$  under certain volume integral pairing for $0<p<1$ and $\alpha,\beta\in\mathbb{R}$. Our other results are about growth at the boundary and atomic decomposition.
\end{abstract}

\date{\today}

\maketitle

\section{Introduction}\label{s-introduction}

Let $\mathbb{B}$ be the open unit ball and $\mathbb{S}$ be the unit sphere in $\mathbb{R}^n$  for $n\geq 2$. We denote the normalized Lebesgue volume measure on $\mathbb{B}$ by $d\nu$ normalized as $\nu(\mathbb{B})=1$. For $\alpha\in \mathbb{R}$, we define on $\mathbb{B}$ the weighted volume measures $d\nu_\alpha$  by
\[
d\nu_\alpha(x)=\frac{1}{V_\alpha} (1-|x|^2)^\alpha d\nu(x).
\]
These measures are finite when $\alpha>-1$ and in this case we choose $V_\alpha$ so that $\nu_\alpha(\mathbb{B})=1$. For $\alpha\leq -1$, we set $V_\alpha=1$.  We denote the Lebesgue classes with respect to $\nu_\alpha$ by $L^p_\alpha=L^p(d\nu_\alpha)$, $0<p<\infty$. For any $f \in L^p_\alpha$, we write
\begin{equation*}
\|f\|_{L^{p}_\alpha} = \left(\int_{\mathbb{B}} |f|^p d\nu_{\alpha} \right)^{1/p}.
\end{equation*}
Note that when $1\leq p<\infty$ the space $L^p_\alpha$ is a Banach space with the above norm. When $0<p<1$, the space $L^p_\alpha$ is a quasi-Banach space; i.e. it is a complete metric space with metric
\begin{equation*}
d(f,g)=\|f-g\|_{L^{p}_\alpha}^{p}
\end{equation*}
which satisfies the properties $d(f,g)=d(f-g,0)$ and $d(\lambda f, 0)=|\lambda|^{p}d(f,0)$ for $\lambda\in \mathbb{C}$.

In this paper we consider two-parameter family of harmonic Besov spaces $b^p_\alpha$ with $0<p<1$ and $\alpha \in \mathbb{R}$ which are sometimes called Bergman-Sobolev spaces or Bergman-Besov spaces. We study in a detailed and systematic way the properties of this family. For the case $1\leq p<\infty$ and $\alpha \in \mathbb{R}$  the spaces $b^p_\alpha$ are studied in   \cite{GKU1,GKU2}. The holomorphic counterpart of this family of spaces for the full range of parameters  have been studied  in \cite{ZZ}.

Let $h(\mathbb{B})$ be the space of all complex-valued harmonic functions on $\mathbb{B}$ with the topology of uniform convergence on compact
subsets. The weighted harmonic Bergman space $b^p_\alpha$ for $\alpha>-1$ and $0<p<\infty$ is $b^p_\alpha=h(\mathbb{B}) \cap L^p_\alpha$. When $\alpha=0$ we denote the ordinary unweighted harmonic Bergman spaces by $b^p$. The space $b^2_\alpha$ $(\alpha> -1)$ is a reproducing kernel Hilbert space  with reproducing kernel $R_\alpha(x,y)$.  When $p \geq 1$, the space $b^p_\alpha$ is a Banach space, while for $p < 1 $ the norm inherited from $L^{p}_\alpha$
defines only a quasinorm, under which $b^p_{\alpha}$ is complete.

The aim of this work is to extend the definition of $b^p_\alpha$ when $0<p<1$ to the case in which $\alpha$ is any real number and develop a theory for the extended family of spaces. This family of spaces are already extended to all $\alpha\in \mathbb{R}$ for the case $1\leq p<\infty$ in \cite{GKU1, GKU2} where also the reproducing kernels $R_\alpha(x,y)$ are extended to the whole range $\alpha\in \mathbb{R}$. We will give a review of these in Section
\ref{kernels-dst}

To extend the definition of $b^p_\alpha$ to the range $\alpha \leq -1$, we need to consider growth rates of derivatives of $u\in h(\mathbb{B})$. To describe the functions in Besov spaces, we will employ three different types of differentiation. For a multi-index $m=(m_1,\dots,m_n)$ where $m_1,\dots,m_n$ are non-negative integers
\begin{equation*}
 \partial^m u= \frac{\partial^{|m|} u}{\partial x_1^{m_1}\cdots\partial x_n^{m_n}},
\end{equation*}
is the usual partial derivative for smooth $u$, where $|m|=m_1+\dots+m_n$.

For any $u\in h(\mathbb{B})$ there exist homogeneous harmonic polynomials $u_k$ of degree $k$ such that
$u=\sum_{k=0}^{\infty} u_k$,  the series converging absolutely and uniformly  on compact subsets of $\mathbb{B}$ which is called the homogeneous expansion of $u$ (see \cite{ABR}). The radial derivative $\mathcal{R} u$ of $u\in h(\mathbb{B})$ is defined as
\begin{equation}\label{Radial-Derivative}
  \mathcal{R}u (x)=x\cdot \nabla u(x) = \sum_{k=0}^{\infty} k u_k (x),
\end{equation}
in which $\nabla$ denotes the usual gradient. More generally, we define
\begin{equation*}
  \mathcal{R}^N u(x) = \mathcal{R} \mathcal{R}^{N-1} u(x) = \sum_{k=0}^\infty k^N u_k(x), \quad N=2,3, \ldots .
\end{equation*}

In addition to partial and radial derivatives we will extensively use certain radial differential operators $D^t_s$, $(s,t \in \mathbb{R})$ introduced in \cite{GKU1} and \cite{GKU2}. These operators are defined in terms of reproducing kernels of harmonic Besov spaces and are specific to these spaces, but still mapping  $h(\mathbb{B})$ onto itself. Thus using the operators $D^t_s$ instead of  partial or radial derivatives is more advantageous. The properties of $D^t_s$ will be reviewed in Section \ref{kernels-dst}. We only note for now that $t$ determines the order of the  differentiation and $s$ plays a minor role.

The following theorem will enable us to define harmonic Besov space $b^p_\alpha$ when $0<p<1$ for the whole range $\alpha \in \mathbb{R}$. Here, $\mathbb{N}=\{0,1,2,\ldots\}$ denotes the set of natural numbers with $0$ included.

\begin{theorem}\label{Theorem-Equiv-Bloch}
Let $0<p<1$,  $\alpha \in \mathbb{R}$ and $u\in h(\mathbb{B})$. The following are equivalent:
\begin{enumerate}
  \item[(a)] For every $N\in \mathbb{N}$ with $\alpha+pN>-1$, we have $(1-|x|^2)^N \partial^m u \in L^p_{\alpha}$ for every multi-index $m$ with $|m|=N$.
  \item[(b)] There exists an $N\in \mathbb{N}$ with $\alpha+pN>-1$ such that $(1-|x|^2)^N \partial^m u \in L^p_{\alpha}$ for every multi-index $m$ with $|m|=N$.
  \item[(c)] For every $N\in \mathbb{N}$ with $\alpha+pN>-1$, we have $(1-|x|^2)^N \mathcal{R}^N u \in L^p_{\alpha}$.
  \item[(d)] There exists an $N\in \mathbb{N}$ with $\alpha+pN>-1$ such that $(1-|x|^2)^N \mathcal{R}^N u \in L^p_{\alpha}$.
  \item[(e)] For every $s,t \in \mathbb{R}$ with $\alpha+pt>-1$, we have $(1-|x|^2)^t D_s^t u \in L^p_{\alpha}$.
  \item[(f)] There exist $s,t \in \mathbb{R}$ with $\alpha+pt>-1$ such that $(1-|x|^2)^t D_s^t u \in L^p_{\alpha}$.
\end{enumerate}
Moreover, if $\alpha+pN>-1$ and $\alpha+pt>-1$, then
\begin{equation}\label{Norm}
\begin{split}
\| (1-|x|^2)^t D^t_s u \|_{L^p_\alpha} &\sim |u(0)| + \| (1-|x|^2)^N \mathcal{R}^N u \|_{L^p_\alpha} \\
&\sim \sum_{|m| \leq N-1}|(\partial^m u)(0)| + \sum_{|m|=N} \| (1-|x|^2)^N \partial^m u \|_{L^p_\alpha}.
\end{split}
\end{equation}
\end{theorem}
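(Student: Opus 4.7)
\medskip

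\noindent\textbf{Proof plan.} The strategy is to use the radial operators $D^t_s$ as the pivot, since the composition rules from \cite{GKU1,GKU2} allow the parameters $(s,t)$ to be shifted freely while preserving the relevant function class. The implications (a)$\Rightarrow$(b), (c)$\Rightarrow$(d), (e)$\Rightarrow$(f) are tautological; (b)$\Rightarrow$(f) for the specific choice $t=N$ with $s$ arbitrary, and (a)$\Rightarrow$(c) from the multinomial expansion of $\mathcal{R}^N=(x\cdot\nabla)^N$, are also straightforward from pointwise bounds. The real content is that the chosen parameter is immaterial.

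\medskip

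\noindent\emph{Step 1: (e)$\Leftrightarrow$(f).} Starting from (f) with some $(s_0,t_0)$ satisfying $\alpha+pt_0>-1$, I would use a reproducing formula of Bergman--Besov type (available once the reproducing kernels $R_\alpha(x,y)$ are extended to all $\alpha\in\mathbb{R}$ in Section~\ref{kernels-dst}) together with the algebraic identities for $D^t_s$ to write
\[
(1-|x|^2)^{t'} D^{t'}_{s'} u(x) = \int_{\mathbb{B}} K(x,y)\,(1-|y|^2)^{t_0} D^{t_0}_{s_0} u(y)\, d\nu(y)
\]
for any $(s',t')$ with $\alpha+pt'>-1$, where $K(x,y)$ carries the two weights and a derivative of a reproducing kernel, hence admits standard Forelli--Rudin pointwise bounds in $|x-y|$. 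The boundedness of this integral operator on $L^p_\alpha$ for $0<p<1$ is the main obstacle (see below).

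\medskip

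\noindent\emph{Step 2: (d)$\Leftrightarrow$(f).} Take $t=N\in\mathbb{N}$ in (f). On the homogeneous expansion $u=\sum u_k$, one has $\mathcal{R}^N u=\sum k^N u_k$ and $D^N_s u=\sum \gamma_k(s,N) u_k$ where $\gamma_k(s,N)=k^N+O(k^{N-1})$; explicitly this yields
\[
D^N_s = \mathcal{R}^N + \sum_{j=0}^{N-1} c_j(s)\,\mathcal{R}^j
\]
as operators on $h(\mathbb{B})$. The lower-order pieces $(1-|x|^2)^N \mathcal{R}^j u$ with $j<N$ are absorbed by applying Step~1 at smaller exponents $t=j$ and using $(1-|x|^2)^{N-j}\leq 1$; at $j=0$ the contribution is a constant multiple of $u(0)$ plus terms controlled by the $D^t_s$-norm. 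This both gives the equivalence and produces the $|u(0)|$ correction in \eqref{Norm}.

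\medskip

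\noindent\emph{Step 3: (b)$\Leftrightarrow$(d) and the partial-derivative norm.} The forward direction uses $(x\cdot\nabla)^N=\sum_{|m|\le N}x^{\beta(m)}\partial^m$ to bound $|\mathcal{R}^N u|$ by the partial derivatives plus lower-order multiples of $\partial^{m'}u$, $|m'|<N$, which are absorbed as in Step~2. The reverse direction relies on the interior estimate for harmonic functions: on the ball $B(x,(1-|x|)/4)$ the harmonic derivatives of order $N$ are mutually comparable in $L^p$ mean, so
\[
|\partial^m u(x)|^p \lesssim (1-|x|)^{-n}\!\!\int_{B(x,(1-|x|)/4)}\!\!\!|\mathcal{R}^N u|^p\, d\nu + \text{lower order},
\]
and the standard Bergman-ball averaging argument upgrades this to the $L^p_\alpha$ estimate. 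The lower-order derivatives at $0$ appear in \eqref{Norm} for the same reason as $|u(0)|$ did in Step~2.

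\medskip

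\noindent\emph{Main obstacle.} The technical heart is the $L^p_\alpha$-boundedness of the integral operators in Step~1 in the small-exponent range $0<p<1$: Schur's test and Minkowski's inequality are unavailable, and one must combine the pointwise kernel bounds with the $p$-subadditivity $|a+b|^p\leq |a|^p+|b|^p$ to reduce the $L^p_\alpha\to L^p_\alpha$ bound to a single Forelli--Rudin type integral over $\mathbb{B}$. Throughout, every parameter shift must respect the constraints $\alpha+pt>-1$ and $\alpha+pt'>-1$, which is what forces the hypothesis on $N$ in (a)--(d) and is exactly the condition that makes the relevant Bergman kernel integrals convergent.
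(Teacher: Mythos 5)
Your high-level architecture (use $D^t_s$ as the pivot, lift the weight exponent until it exceeds $-1$, and reduce everything to reproducing formulas plus Forelli--Rudin kernel estimates) matches the paper's, but two of your key mechanisms fail as stated. The first is your ``main obstacle'' paragraph: you propose to push the exponent $p$ inside the reproducing integral by the $p$-subadditivity $|a+b|^p\le|a|^p+|b|^p$. That inequality is valid for sums, not for integrals: for $0<p<1$ one does \emph{not} have $\bigl|\int f\,d\mu\bigr|^p\le\int|f|^p\,d\mu$ in general (test $f\equiv1$ on a set of small measure). The paper's actual device is Lemma \ref{Lemma-Special}: because the integrand $R_s(x,y)\,u(y)$ is a product of two functions harmonic in $y$, the subharmonic behaviour of $|uv|^p$ for harmonic $u,v$ (Pavlovi\'c's theorem, via Lemma \ref{twoLemma-Special}) gives the pointwise bound $|u(y)v(y)|\lesssim\|uv\|_{L^p_\alpha}(1-|y|^2)^{-(n+\alpha)/p}$, and writing $|uv|=|uv|^p|uv|^{1-p}$ converts $\int_{\mathbb{B}}|R_s(x,y)u(y)|(1-|y|^2)^{s}\,d\nu(y)$, with $s=(n+\alpha')/p-n$, into $\bigl(\int_{\mathbb{B}}|R_s(x,y)|^p|u(y)|^p(1-|y|^2)^{\alpha'}\,d\nu(y)\bigr)^{1/p}$. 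Without this (or an equivalent substitute) your Step 1 and the implication (a)$\Rightarrow$(b) have no proof; this lemma is the genuinely new ingredient of the small-exponent case and it is absent from your outline.

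The second gap is the reverse direction of your Step 3: $N$-th order partial derivatives of a harmonic function are \emph{not} controlled, even in local $L^p$ mean on Bergman balls, by $\mathcal{R}^N u$ alone. Already for $u(x)=x_1$ and $x$ near the boundary with $x_1=0$ one has $|\partial_1 u(x)|=1$, while the mean of $|\mathcal{R}u|^p=|y_1|^p$ over $B(x,(1-|x|)/4)$ is $O((1-|x|)^p)$, so no local comparability holds. Recovering $u$ (and hence its partials) from $\mathcal{R}u$ is inherently global; the paper uses $u(x)-u(0)=\int_0^1\tau^{-1}\mathcal{R}u(\tau x)\,d\tau$, represents $\mathcal{R}u$ by the reproducing formula, and estimates the resulting kernel $G(x,y)=\int_0^1\tau^{-1}\bigl(R_s(\tau x,y)-1\bigr)\,d\tau$ via Lemma \ref{Lemma Estimate Integral wrt t}, then obtains the partial derivatives from $u$ itself by differentiating the reproducing formula. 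Your Step 2 (writing $D^N_s$ as $\mathcal{R}^N$ plus lower-order radial derivatives) is workable in spirit, though the paper instead proves the $D^t_s$ characterization independently for $\alpha>-1$ (Lemma \ref{Lemma Dst}, with the inverse $D^{-t}_{s+t}$ handling one direction) and then passes among (b), (d), (f) for general $\alpha$ by commuting $D^t_s$ with $\mathcal{R}^{N}$ and invoking the $\alpha>-1$ lemmas at the lifted weights $\alpha+pN_0$ or $\alpha+pt_0$.
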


For the case $1\leq p<\infty$, the above theorem is proved in \cite[Theorem 1.2]{GKU2}. Thus these theorems complete the picture about the interchangeability of various kinds of derivatives in defining the two-parameter harmonic Besov space family for the full range of parameters.  For the holomorphic analogues   with $0< p<\infty$  see \cite{KHZ}  when $\alpha >-1$ and see \cite{ZZ} when $\alpha \in \mathbb{R}$ . Also our presentation here  is influenced by the methods in that \cite{ZZ,KHZ}, but the technical details are different.

\begin{definition}\label{DefinitionAllAlpha}
Let $0<p<1$ and $\alpha \in \mathbb{R}$. The harmonic Besov space $b^{p}_{\alpha}$ consists of those $u\in h(\mathbb{B})$ such that any one of the equivalent conditions of Theorem \ref{Theorem-Equiv-Bloch} is satisfied.
\end{definition}

The space $b^{p}_{\alpha}$ do not depend on the choice of $N$ or $s,t$ as long as $\alpha+pN>-1$ and $\alpha+pt>-1$ is satisfied. When
$\alpha>-1$ ($\alpha=0$), one can choose N = 0 in part (b) of the above theorem and observe that Definition \ref{DefinitionAllAlpha} is consistent with the definition of weighted(unweighted) harmonic Bergman spaces. Notice that we have the same definition for harmonic Besov spaces $b^{p}_{\alpha}$ when $1\leq p<\infty$, see \cite{GKU2} where it is also shown that the choice of $N$ or $s,t$ is irrelevant
as long as $\alpha+pN>-1$ and $\alpha+pt>-1$  is satisfied.

We mention a few immediate consequences of Definition \ref{DefinitionAllAlpha}. First, if $u\in h(\overline{\mathbb{B}})$, then $u\in b^{p}_{\alpha}$ for each $0<p<1$ and $\alpha \in \mathbb{R}$; in particular every $b^{p}_{\alpha}$ is non-trivial since it clearly contains harmonic polynomials. Also the following simple inclusion property holds:
\begin{equation}\label{Inclusion}
 b^{p}_\alpha \subset b^{p}_{\beta} \qquad (\text{for\ } \alpha < \beta).
\end{equation}

The inclusion relations between harmonic besov spaces not only for $0<p<1$ but also for the full range of parameters is completely
determineted by Theorem 1.1 and Theorem 1.2 in \cite{DU2}. 

When $0<p<1$ and $\alpha>-1$ we have a standard quasinorm on $b^{p}_\alpha$, but when $\alpha \leq -1$ we do not. For $\alpha\in \mathbb{R}$, if we pick any $N\in\mathbb{N}$ with $\alpha+pN>-1$ or pick $s,t\in \mathbb{R}$ with $\alpha+pt>-1$, each term in (\ref{Norm}) is a quasinorm on $b^{p}_\alpha$. Since all these quasinorms are equivalent, there is no essential difference in choosing any one of them; and we will denote any one of these quasinorms by $\| \cdot \|_{b^{p}_\alpha}$ without indicating the dependence on $N$ or $s,t$.

For $s,t \in \mathbb{R}$ consider the linear transformations $I^t_s$ defined for $u\in h(\mathbb{B})$ by
\begin{equation*}
  I^t_s u(x) := (1-|x|^2)^t D^t_s u(x).
\end{equation*}
It is clear from Theorem \ref{Theorem-Equiv-Bloch} that given $\alpha\in \mathbb{R}$, if $t$ is chosen to satisfy $\alpha+pt>-1$, then $u\in b^{p}_\alpha$ if and only if $I^t_s u \in L^p_\alpha$ and $\| I^t_s u \|_{L^p_\alpha}$ is a quasinorm on $b^{p}_\alpha$.

Roughly speaking the $``p=\infty"$ case of Besov spaces $b^p_\alpha$ is the one parameter family of Bloch spaces $b^\infty_\alpha$. When $\alpha=0$, the well-known harmonic Bloch space $b^\infty_0$ is the space of all $u\in h(\mathbb{B})$ such that
\[
 \sup_{x\in\mathbb{B}}\, (1-|x|^2)|\nabla u(x)| <\infty.
\]
 When $\alpha>0$,  the  weighted harmonic Bloch space $b^\infty_\alpha$  is defined by
\[
b^\infty_\alpha=\Big\{u\in h(\mathbb{B}): \sup_{x\in \mathbb{B}}\, (1-|x|^2)^\alpha |u(x)|<\infty\Big\} \qquad (\alpha>0).
\] Let $\alpha\in \mathbb{R}$. Pick a non-negative integer $N$ such that
\begin{equation}\label{alpha-N}
\alpha+N>0.
\end{equation}
The weighted harmonic Bloch space $b^\infty_\alpha$ consists of all $u\in h(\mathbb{B})$ such that
\[
\sup_{x\in \mathbb{B}}\, (1-|x|^2)^{\alpha+N} |\partial^m u(x)|<\infty,
\]
for every multi-index $m$ with $|m|=N$. As before the spaces $b^\infty_\alpha$ do not depend on the choice of $N$ as long as (\ref{alpha-N}) is satisfied and partial derivatives can be replaced with radial derivatives or the operators $D^t_s$. These are studied in detail in \cite{DU1}.

It is well-known that for $1< p < \infty$ and $\alpha>-1$, $(b^p_\alpha)'$, the dual space of the harmonic Bergman space $b^p_\alpha$  can be identified with $b^{p'}_\alpha$, where $1/p+1/p' =1$. It is shown in \cite[Theorem 13.4]{GKU2} that this statement is true for all $\alpha\in \mathbb{R}$. In \cite{DU1} by extending this result to the case $p=1$, it is proved that  $(b^1_\alpha)'$ can be identified with $b^{\infty}_\beta$ for any $\alpha,\beta \in \mathbb{R}$.

The second aim of this paper is to consider remaining cases, namely identifying the dual spaces of $b^p_\alpha$ when $0<p<1$ and $\alpha\in \mathbb{R}$.

The harmonic Besov space $b^p_\alpha$ is not a Banach space when $0<p<1$. However, we can still
consider its dual space. In fact, we define the dual space of $b^p_\alpha$ for $0<p<1$ in exactly the same way as we do for $p\geq 1$. Thus $(b^p_\alpha)'$ is the space of all bounded linear functionals on $b^p_\alpha$. Then $(b^p_\alpha)'$ becomes a Banach space with the norm
\[
\|F\|=\sup\Big\{|F(u)|: \|u\|_{b^{p}_\alpha}\leq 1\Big\}
\]
for any bounded linear functional $ F$ on $b^p_\alpha$.

Our main result is the following theorem. Here, $\alpha,\beta \in \mathbb{R}$ without any restriction and the aforomentioned identification can be obtained using many different pairings. More precisely, we have the following.

\begin{theorem}\label{Theorem-Dual-of-b1q}
Let $0<p\leq1$, $\alpha\in \mathbb{R}$ and $\rho=(n+\alpha)/p-n$. Pick $s,t$ such that
\begin{align}
s &> \rho, \label{Dual-S}\\
\alpha+pt &> -1. \label{Dual-T}
\end{align}
The dual of $b^p_\alpha$ can be identified with $b^{\infty}_\beta$ (for any $\beta \in \mathbb{R}$) under the pairing
\begin{equation}\label{Dual-Pairing}
\langle u, v \rangle = \int_{\mathbb{B}} I^t_s u \, \overline{I^{s-\rho-\beta}_{t+\rho+\beta}v} \ d\nu_{\rho+\beta}, \qquad (u\in b^p_\alpha, \ v\in b^{\infty}_\beta).
\end{equation}
\end{theorem}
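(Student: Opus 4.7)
The plan is to exhibit a bounded bijection $\Phi:b^\infty_\beta\to(b^p_\alpha)'$ defined by $\Phi(v)(u):=\langle u,v\rangle$. We must show (i) $\|\Phi(v)\|\lesssim\|v\|_{b^\infty_\beta}$, and (ii) $\Phi$ is surjective with $\|v\|_{b^\infty_\beta}\lesssim\|\Phi(v)\|$. Injectivity of $\Phi$ then follows from the quantitative norm estimate, since the $b^\infty_\beta$-quasinorm is a genuine quasinorm in view of the bijectivity of $D^t_s$ on $h(\mathbb{B})$.

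For (i), the choice in (\ref{Dual-S}) makes $\beta+(s-\rho-\beta)=s-\rho>0$, so the pair $(s-\rho-\beta,\,t+\rho+\beta)$ is admissible in the $D^t_s$-characterization of the $b^\infty_\beta$-quasinorm developed in \cite{DU1}. This gives the pointwise bound
\[
\bigl|I^{s-\rho-\beta}_{t+\rho+\beta}v(x)\bigr|\;=\;(1-|x|^2)^{s-\rho-\beta}\bigl|D^{s-\rho-\beta}_{t+\rho+\beta}v(x)\bigr|\;\lesssim\;(1-|x|^2)^{-\beta}\|v\|_{b^\infty_\beta}.
\]
Substituting into (\ref{Dual-Pairing}) and absorbing $(1-|x|^2)^{-\beta}$ into the weight of $d\nu_{\rho+\beta}$,
\[
|\langle u,v\rangle|\;\lesssim\;\|v\|_{b^\infty_\beta}\int_\mathbb{B}\bigl|I^t_s u(x)\bigr|(1-|x|^2)^\rho\,d\nu(x).
\]
The remaining task is to dominate the last integral by $\|u\|_{b^p_\alpha}$. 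The identity $p(n+\rho)=n+\alpha$ signals the inclusion $b^p_\alpha\hookrightarrow b^1_\rho$ with control of quasinorms, which I would obtain from the subharmonicity of $|D^t_s u|^p$ via a submean-value inequality on hyperbolic balls of radius comparable to $1-|x|$, followed by a bounded-overlap integration in $x$. This is the small-exponent analogue of a Forelli--Rudin-type estimate and closes (i).

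For (ii), the passage from $F\in(b^p_\alpha)'$ to $v\in b^\infty_\beta$ relies essentially on an atomic decomposition of $b^p_\alpha$, because $(L^p_\alpha)'=\{0\}$ when $p<1$ and so the standard Hahn--Banach route available at $p=1$ is unavailable. After fixing a sufficiently dense $\delta$-lattice $\{a_k\}\subset\mathbb{B}$ in the hyperbolic metric and normalized kernel atoms $\phi_{a_k}$ built from $R_{s+t}(\cdot,a_k)$, every $u\in b^p_\alpha$ has a representation $u=\sum_k\lambda_k\phi_{a_k}$ with $\bigl(\sum_k|\lambda_k|^p\bigr)^{1/p}\sim\|u\|_{b^p_\alpha}$. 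The natural candidate for a representing function is then
\[
v(y):=c\cdot F\bigl(R_{t+\rho+\beta}(\cdot,y)\bigr),
\]
the constant $c$ being tuned so that, after exploiting the kernel identity $D^{s-\rho-\beta}_{t+\rho+\beta}R_{t+\rho+\beta}(\cdot,y)=R_{s+t}(\cdot,y)$ to evaluate (\ref{Dual-Pairing}) on an atom, one recovers $F(\phi_{a_k})$ exactly.

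The main obstacle is verifying that this $v$ lies in $b^\infty_\beta$ with $\|v\|_{b^\infty_\beta}\lesssim\|F\|$. Differentiating through the bounded (but not integral) functional $F$ reduces the matter to a sharp uniform estimate
\[
\bigl\|D^{t'}_{s'}R_{t+\rho+\beta}(\cdot,y)\bigr\|_{b^p_\alpha}\;\lesssim\;(1-|y|^2)^{-\beta-t'}\qquad(\beta+t'>0),
\]
where the left-hand quasinorm is computed via the $(s,t)$-realization of $b^p_\alpha$. Producing this bound requires sharp two-sided pointwise and $L^p_\alpha$-integral estimates for the extended reproducing kernels $R_s(x,y)$ and their $D^{t'}_{s'}$-derivatives, drawn from the kernel calculus of \cite{GKU1,GKU2}; this is the hardest technical step. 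Independence of the representation from $\beta\in\mathbb{R}$ is built in, because the free parameters $(s',t')$ in the $D^t_s$-characterization of $b^\infty_\beta$ absorb the shift in $\beta$ via re-indexing, so altering $\beta$ just relabels the operator applied to $v$ in (\ref{Dual-Pairing}) without changing the underlying duality.
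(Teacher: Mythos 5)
Your part (i) is essentially the paper's argument and is sound: since $\beta+(s-\rho-\beta)=s-\rho>0$ by (\ref{Dual-S}), the pair $(s-\rho-\beta,\,t+\rho+\beta)$ is admissible for the $D^t_s$-characterization of $b^\infty_\beta$, and the resulting pointwise bound reduces boundedness of the functional to the continuous embedding $b^p_\alpha\subset b^1_\rho$; your submean-value route to that embedding is exactly Lemma \ref{Lemma-Special1} combined with Proposition \ref{Proposition-The map Dst} (i.e.\ Proposition \ref{embedding-inc}).

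Part (ii), however, contains a genuine error: your candidate $v(y)=c\,F\bigl(R_{t+\rho+\beta}(\cdot,y)\bigr)$ represents the functional $u\mapsto c'F(D^t_s u)$, not $F$. Write $t'=s-\rho-\beta$, $s'=t+\rho+\beta$. Collecting the weights in (\ref{Dual-Pairing}) (note $t+t'+\rho+\beta=s+t$) gives
\begin{equation*}
\langle u,v\rangle=\frac{V_{s+t}}{V_{\rho+\beta}}\int_{\mathbb{B}}D^t_s u(y)\,\overline{D^{t'}_{s'}v(y)}\,d\nu_{s+t}(y),
\end{equation*}
and your kernel identity yields $D^{t'}_{s'}v(y)=c\,F\bigl(R_{s'+t'}(\cdot,y)\bigr)=c\,F\bigl(R_{s+t}(\cdot,y)\bigr)$. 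Interchanging $F$ with the integral then produces $F$ applied to $\int_{\mathbb{B}} R_{s+t}(\cdot,y)D^t_s u(y)\,d\nu_{s+t}(y)$, which by the genuine reproducing property of $R_{s+t}$ (applicable since $D^t_s u\in b^1_{\rho+t}\subset b^1_{s+t}$) equals $D^t_s u$, not $u$; these differ as functionals already for point evaluations away from the origin. Recovering $u$ itself requires the kernel of order $s$, via (\ref{Reproducing}): $u(x)=\int_{\mathbb{B}} R_s(x,y)D^t_s u(y)\,d\nu_{s+t}(y)$, valid because $u\in b^1_\rho$ and $s>\rho$. Hence the representing function must satisfy $D^{t'}_{s'}v(y)=c\,\overline{F(R_s(\cdot,y))}$, i.e.\ $v=c\,D^{-t'}_{s'+t'}\overline{F(R_s(\cdot,y))}$, which is how the paper defines it; $v$ cannot be written as $F$ of a single kernel $R_\sigma(\cdot,y)$ because the lower index $s'$ of the operator being inverted does not match the kernel order, and this mismatch is exactly what your ansatz conceals. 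The same off-by-$D^t_s$ slip infects your atoms: for $I^t_s\phi_{a_k}$ to be a clean kernel the atoms should be built from $R_s(\cdot,a_k)$ as in Theorem \ref{atomicbesov}, not $R_{s+t}(\cdot,a_k)$. Modulo this correction, your surjectivity strategy (atomic decomposition plus the kernel-norm estimate, which is Lemma \ref{norm-kernel}) is a viable alternative to the paper's, which instead uses density of harmonic polynomials together with the product-subharmonicity Lemma \ref{Lemma-Special} to pull $F$ through the integral representation; but as written your construction proves the wrong identity.
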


For  $\alpha>-1$ and  $\beta=0$ the above theorem is proved  in \cite{R}, where a slightly different pairing (involving a limit) is used. For the holomorphic analogues of this theorem with $\alpha>-1$, $\beta=0$ see \cite{ KHZ} and with $\alpha,\beta \in \mathbb{R}$  see \cite{ ZZ}. Once again, the pairings in \cite{ZZ} are slightly different than our pairings and involve a limit.

When $\alpha>-1$, atomic decomposition of harmonic Bergman spaces $b^{p}_{\alpha}$ is proved in \cite{CR}. In the following theorem we  extend their result to all $\alpha\in \mathbb{R}$.
\begin{theorem}\label{atomicbesov}
Let $0< p<1$ and $\alpha\in \mathbb{R}$. There exists a sequence $(x_m)$ of points of $\mathbb{B}$ with the following property: Let $s>(n+\alpha)/p-n$.
 \begin{enumerate}
    \item[(i)] For every  $u\in b^{p}_{\alpha}$, there exists $(\lambda_{m})\in \ell^{p}$ such that
\begin{equation}\label{atbesove}
u(x)=\sum_{m=1}^{\infty} \lambda_{m}(1-|x_m|^{2})^{n+s-(n+\alpha)/p} R_{s}(x,x_m)
\end{equation}
and  $\|\lambda_{m}\|_{\ell^{p}}\lesssim \|u\|_{b^{p}_{\alpha}}$.
    \item[(ii)]  For every $(\lambda_{m})\in \ell^{p}$, the function $u$ defined in (\ref{atbesove}) is in $b^{p}_{\alpha}$ and
 $\|u\|_{b^{p}_{\alpha}} \lesssim\|\lambda_{m}\|_{\ell^{p}}$.
  \end{enumerate}
\end{theorem}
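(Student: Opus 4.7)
The strategy is to transfer both parts of the theorem to $L^p_\alpha$ via the isomorphism $I^t_s\colon b^p_\alpha\to L^p_\alpha$ provided by Theorem~\ref{Theorem-Equiv-Bloch}. Fix $t$ with $\alpha+pt>-1$, enlarged if necessary so that the relevant Forelli--Rudin integrals lie in the convergent regime. The key computational input is $D^t_s R_s(\,\cdot\,,y)=R_{s+t}(\,\cdot\,,y)$ (from \cite{GKU1,GKU2}) together with the standard pointwise estimate $|R_{s+t}(x,y)|\lesssim|1-\langle x,y\rangle|^{-(n+s+t)}$.

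For part (ii), set $\mathcal{A}_m(x)=(1-|x_m|^2)^{s-\rho}R_s(x,x_m)$, so $I^t_s\mathcal{A}_m(x)=(1-|x|^2)^t(1-|x_m|^2)^{s-\rho}R_{s+t}(x,x_m)$. The hypothesis $s>\rho$ is exactly the condition $p(n+s+t)-(pt+\alpha)>n$ that places the Forelli--Rudin integral
\[
\int_\mathbb{B}(1-|x|^2)^{pt+\alpha}|R_{s+t}(x,x_m)|^p\,d\nu(x)
\]
in the regime where it is $\sim(1-|x_m|^2)^{n-p(n+s)+\alpha}$; the factor $(1-|x_m|^2)^{p(s-\rho)}$ cancels this exactly (using $p\rho=n+\alpha-pn$), giving $\|\mathcal{A}_m\|_{b^p_\alpha}\sim 1$ uniformly in $m$. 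For $u=\sum_m\lambda_m\mathcal{A}_m$ the $p$-subadditivity of the $L^p_\alpha$ quasinorm then yields $\|u\|_{b^p_\alpha}^p\lesssim\sum_m|\lambda_m|^p$, and absolute convergence in $h(\mathbb{B})$ follows from the same pointwise bound.

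For part (i), I take $(x_m)$ to be an $r$-separated, $r$-covering Bergman lattice with an associated Borel partition $\{D_m\}$, $x_m\in D_m$, and $\nu(D_m)\sim(1-|x_m|^2)^{n+1}$. Starting from a reproducing formula of the type $u(x)=c\int_\mathbb{B}R_s(x,y)D^t_s u(y)(1-|y|^2)^{t+s-\rho-(n+1)}\,d\nu(y)$, obtained by specializing the duality pairing in Theorem~\ref{Theorem-Dual-of-b1q} or from the integral representations of \cite{GKU1,GKU2}, I discretize by replacing the integrand on each $D_m$ by its value at $x_m$ times $\nu(D_m)$. The resulting coefficients $\lambda_m^{(0)}$ have the exact shape demanded by~\eqref{atbesove}, and a sub-mean-value estimate for the harmonic function $D^t_s u$ over Bergman balls of radius $r$, combined with Riemann-sum comparison against $(1-|y|^2)^{pt+\alpha}|D^t_s u(y)|^p$, gives $\sum_m|\lambda_m^{(0)}|^p\lesssim\|I^t_s u\|_{L^p_\alpha}^p\sim\|u\|_{b^p_\alpha}^p$.

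The main obstacle is closing the decomposition in the quasi-Banach range. The naive discretization produces an error $E^{(0)}=u-T\lambda^{(0)}$ which is not automatically zero, and the Neumann-series trick from the Banach case is unavailable for $0<p<1$. However, both $\ell^p$ and $L^p_\alpha$ are $p$-normed, so the iteration nonetheless works: a pointwise oscillation estimate of the form $|R_{s+t}(x,y)-R_{s+t}(x,x_m)|\leq\omega(r)|R_{s+t}(x,x_m)|$ for $y\in D_m$, raised to the $p$-th power and integrated in $x$, shows $\|E^{(0)}\|_{b^p_\alpha}^p\leq\epsilon(r)\|u\|_{b^p_\alpha}^p$ with $\epsilon(r)\to 0$ as $r\to 0$. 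Choosing $r$ small enough that $\epsilon(r)<1$, iterating on successive errors, and setting $\lambda_m=\sum_k\lambda_m^{(k)}$ produces a geometrically convergent series in $\ell^p$ with $\|(\lambda_m)\|_{\ell^p}^p\lesssim\|u\|_{b^p_\alpha}^p$ that reconstructs $u$ in the required form. The delicate technical point is the uniform oscillation bound on the kernel, which reduces to controlling the gradient of $R_{s+t}(x,\cdot)$ along Bergman-small displacements uniformly in $x_m$.
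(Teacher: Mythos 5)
Your part (ii) is essentially the paper's argument: estimate the quasinorm of a single atom via the Forelli--Rudin integral (Lemma~\ref{norm-kernel}, after applying $D^t_s$ and using $D^t_sR_s=R_{s+t}$), then sum with $p$-subadditivity. For part (i), however, you take a genuinely different and much heavier route. The paper does \emph{not} construct a lattice or run the discretization-plus-error-iteration scheme at all: it quotes the Coifman--Rochberg theorem for $b^p_0$ from \cite{CR} as a black box (this is where the sequence $(x_m)$ comes from), applies the isometric isomorphism $D_s^{-\alpha/p}:b^p_\alpha\to b^p_0$ of Proposition~\ref{Proposition-The map Dst} to reduce to the unweighted case with parameter $s-\alpha/p$, and then applies $D^{\alpha/p}_{s-\alpha/p}$ termwise (justified by uniform convergence on compacta and Lemma~\ref{Lemma-cont-Dst}) to convert the atoms $R_{s-\alpha/p}(\cdot,x_m)$ back into $R_s(\cdot,x_m)$ via \eqref{Dst - Rs}. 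What you are proposing is, in effect, a self-contained re-proof of Coifman--Rochberg adapted to general $\alpha$; it buys independence from \cite{CR} at the cost of the lattice construction, the kernel oscillation estimate, and the iteration in a $p$-normed space -- precisely the content the paper deliberately outsources.

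Your sketch of that harder route is structurally sound (the iteration does close for $0<p<1$ because $\ell^p$ and $L^p_\alpha$ are complete $p$-normed spaces), but two details are off and would need repair. First, in $\mathbb{R}^n$ the pseudo-hyperbolic cells satisfy $\nu(D_m)\sim(1-|x_m|^2)^{n}$, not $(1-|x_m|^2)^{n+1}$; the latter is the $\mathbb{C}^n$ convention, and with it the exponent bookkeeping $p(t+n+\rho)-n=pt+\alpha$ (which is exactly what makes $\sum_m|\lambda_m^{(0)}|^p\lesssim\|I^t_su\|^p_{L^p_\alpha}$ come out right after the sub-mean-value step) fails. Second, the reproducing formula you start from should carry the weight $(1-|y|^2)^{s+t}$ as in \eqref{Reproducing}; the exponent $t+s-\rho-(n+1)$ you wrote is not one for which the representation holds, and in any case the weight is forced by \eqref{Reproducing}, not free to choose. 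Note also that \eqref{Reproducing} is stated for $1\le p<\infty$, so its use for $u\in b^p_\alpha$ with $p<1$ must pass through the embedding $b^p_\alpha\subset b^1_\rho$ of Proposition~\ref{embedding-inc}, as the paper does elsewhere.
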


The paper is organized as follows. In Section \ref{s-preliminaries} we collect some known facts which we will
heavily use in later sections. In Section \ref{Section-subharmonic} we show that product of two harmonic functions have subharmonic behaviour and this leads to an integral inequality which will be needed in the proofs of the main results. In Section \ref{Section-Proof-of-T1} we will prove Theorem \ref{Theorem-Equiv-Bloch} and derive basic properties of the space $b^{p}_\alpha$ for $0<p<1$ and $\alpha\in \mathbb{R}$ . Finally in Section \ref{pe-dual-atomic} using results from the previous section, we will obtain some pointwise
estimates for functions in $b^{p}_\alpha$. Then we will show that the dual of Besov space $b^p_\alpha$, $0<p<1$ can be identified with $b^{\infty}_\beta$ (for any $\beta \in \mathbb{R}$) under suitable pairings. We will  also obtain atomic decomposition for all $\alpha \in \mathbb{R}$.

\section{Preliminaries}\label{s-preliminaries}

 For two positive expressions $X$ and $Y$, we write $X\lesssim Y$ if there exists a positive constant $C$, whose exact value is inessential, such that $X\leq CY$. We also write $X\sim Y$ if both  $X\lesssim Y$ and $Y\lesssim X$ hold.

The Pochhammer symbol $(a)_b$ is defined by
\[
(a)_b=\frac{\Gamma(a+b)}{\Gamma(a)},
\]
when $a$ and $a+b$ are off the pole set $-\mathbb{N}$ of the gamma function. Stirling formula gives
\begin{equation}\label{Stirling}
\frac{(a)_c}{(b)_c} \sim c^{a-b}, \quad c\to\infty.
\end{equation}

For $x,y\in \mathbb{B}$, we will use the notation
\begin{equation*}
  [x,y]=\sqrt{1-2 x\cdot y + |x|^2 |y|^2},
\end{equation*}
where $x\cdot y$ denotes the inner product of $x$ and $y$ in $\mathbb{R}^n$. It is elementary to show that the equalities
\begin{equation*}
  [x,y] = \Big| |y|x - \frac{y}{|y|}\Big| = \Big| |x|y - \frac{x}{|x|}\Big|,
\end{equation*}
hold for every nonzero $x,y$. Note that $0< 1-|x||y|\leq [x,y]\leq 1+|x||y|<2$ for $x,y \in \mathbb{B}$ and when $y=\zeta\in \mathbb{S}$, we have $[x,\zeta]=|x-\zeta|$.

\subsection{Zonal harmonics}\label{Subsection-Zonal Harmonics} We denote the space of all homogeneous harmonic polynomials on $\mathbb{R}^n$ of degree $k$ by $\mathcal{H}_k(\mathbb{R}^n)$. The restriction of $u_k\in \mathcal{H}_k(\mathbb{R}^n)$ to the unit sphere $\mathbb{S}$ is called a spherical harmonic and we denote the space of spherical harmonics of degree $k$ by $\mathcal{H}_k(\mathbb{S})$. The finite-dimensional space $\mathcal{H}_k(\mathbb{S})$ $\subset L^2(\mathbb{S})$ is a reproducing kernel Hilbert space: For $\zeta\in \mathbb{S}$, there exists (real-valued) $Z_k(\cdot, \zeta)$ such that
\begin{equation*}
  u_k(\zeta)=\int_{\mathbb{S}} u_k(\eta) Z_k(\eta,\zeta) d\sigma(\eta) \quad (\forall u_k\in \mathcal{H}_k(\mathbb{S})),
\end{equation*}
where $d\sigma$ is normalized surface area measure on $\mathbb{S}$. The spherical harmonic $Z_k(\cdot, \zeta)$ is called zonal harmonic of degree $k$ with pole $\zeta$. It can be extended to $\mathbb{R}^n\times \mathbb{R}^n$ by making it homogeneous in each variable: If $x=|x|\eta$, $y=|y|\zeta$ with $\eta,\zeta\in \mathbb{S}$,
\begin{equation*}
  Z_k(x,y)=|x|^k |y|^k Z_k(\eta,\zeta),\quad k=1,2,\ldots
\end{equation*}
For $k=0$, we set $Z_0(x,y)\equiv 1$. For future reference we state the following properties of $Z_k$ (see Chapter 5 of \cite{ABR} for details).
\begin{lemma}\label{Lemma-Zk}
The following properties hold:
\begin{enumerate}
\item[(a)] $Z_k(x,y)$ is real-valued and symmetric in its variables.
\item[(b)] $Z_k(x,0)=Z_k(0,y)=0$, for every $x,y\in \mathbb{R}^n,\ k=1,2,\ldots$
\item[(c)] For $k\geq 1$ and $\zeta\in \mathbb{S}$, $\max_{\eta\in \mathbb{S}} |Z_k(\eta,\zeta)|= Z_k(\zeta,\zeta)$ and $Z_k(\zeta,\zeta) \sim k^{n-2}$. Therefore $|Z_k(x,y)| \lesssim |x|^k |y|^k k^{n-2}$.
\item[(d)] If $Y^{j}_{k} \, (j=1,2,\ldots,J_{k})$ is an orthonormal basis of $\mathcal{H}_k(S)$, then $Z_k(\eta,\zeta)= \sum_{j=1}^{J_{k}} Y^{j}_{k}(\eta)\overline{Y^{j}_{k}(\zeta)}$.
\item[(e)] If $u_k \in \mathcal{H}_k(\mathbb{R}^n)$, then $u_k(x) = \int_{\mathbb{S}} u_k(\eta) Z_k(x,\eta) d\sigma(\eta)$.
\item[(f)] If $u_k \in \mathcal{H}_k(\mathbb{R}^n)$ and $l \neq k$, then $\int_{\mathbb{S}} u_k(\eta) Z_l(x,\eta) d\sigma(\eta)=0$.
\end{enumerate}
\end{lemma}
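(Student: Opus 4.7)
The plan is to prove the six properties as standard consequences of the reproducing-kernel definition of $Z_k$; as the lemma merely collects material from Chapter~5 of \cite{ABR}, I would take (d) as the starting point and derive the rest from it, occasionally invoking rotational invariance of the sphere.

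For (d), I would fix an orthonormal basis $Y_k^1,\dots,Y_k^{J_k}$ of $\mathcal H_k(\mathbb S)$ and observe that $K(\eta,\zeta)=\sum_j Y_k^j(\eta)\overline{Y_k^j(\zeta)}$ is basis-independent and reproduces every $u_k\in\mathcal H_k(\mathbb S)$; by uniqueness of the reproducing kernel, $K=Z_k$. Part (a) follows at once: in a real-valued basis the conjugation disappears, giving both the symmetry $Z_k(\eta,\zeta)=Z_k(\zeta,\eta)$ and real-valuedness on $\mathbb S\times\mathbb S$, and these extend to $\mathbb R^n\times\mathbb R^n$ via the homogeneity formula. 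Part (b) is immediate from the extension $Z_k(x,y)=|x|^k|y|^k Z_k(\eta,\zeta)$ for $k\ge 1$, which vanishes whenever $x=0$ or $y=0$.

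For (c), two observations suffice. Cauchy--Schwarz applied to the formula in (d) yields
\begin{equation*}
|Z_k(\eta,\zeta)|^2\le\Bigl(\sum_j |Y_k^j(\eta)|^2\Bigr)\Bigl(\sum_j |Y_k^j(\zeta)|^2\Bigr)=Z_k(\eta,\eta)\,Z_k(\zeta,\zeta),
\end{equation*}
while rotational invariance forces the diagonal value $Z_k(\zeta,\zeta)$ to be a constant in $\zeta\in\mathbb S$. Hence $|Z_k(\eta,\zeta)|\le Z_k(\zeta,\zeta)$ with equality at the pole, and integrating the constant diagonal over $\mathbb S$ gives $Z_k(\zeta,\zeta)=\int_{\mathbb S}Z_k(\eta,\eta)\,d\sigma(\eta)=J_k$. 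The asymptotic $J_k\sim k^{n-2}$ is the standard dimension count $J_k=\binom{k+n-1}{n-1}-\binom{k+n-3}{n-1}$, after which restoring the homogeneity factors $|x|^k|y|^k$ yields $|Z_k(x,y)|\lesssim |x|^k|y|^k k^{n-2}$.

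Finally, (e) extends the sphere-level reproducing identity to $\mathbb R^n$ by pulling out the common factor $|x|^k$ from $u_k(x)=|x|^k u_k(\xi)$ and $Z_k(x,\eta)=|x|^k Z_k(\xi,\eta)$ for $x=|x|\xi$. Part (f) is the $L^2(\mathbb S)$-orthogonality of $\mathcal H_k(\mathbb S)$ and $\mathcal H_l(\mathbb S)$ for $k\ne l$ applied to $u_k$ and the basis of $\mathcal H_l(\mathbb S)$ used in (d); since $Z_l(x,\cdot)|_{\mathbb S}\in\mathcal H_l(\mathbb S)$, the integral vanishes term-by-term. No individual step is difficult, so the only item with genuine content is the asymptotic $J_k\sim k^{n-2}$ in (c), which is cleanly settled by the dimension formula above; all other parts are formal manipulations organised around (d).
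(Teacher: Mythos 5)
Your proof is correct. The paper itself gives no argument for this lemma — it simply cites Chapter~5 of Axler--Bourdon--Ramey — and your derivation (taking the basis expansion in (d) as the definition-independent starting point, then getting (a)--(c) from real bases, homogeneity, Cauchy--Schwarz, rotational invariance of the diagonal, and the dimension count $J_k=\binom{k+n-1}{n-1}-\binom{k+n-3}{n-1}\sim k^{n-2}$) is exactly the standard treatment in that reference. The only implicit ingredients worth making explicit are that $\mathcal H_k(\mathbb S)$ admits a real orthonormal basis (needed for (a)) and that $\{Y_k^j\circ A\}$ is again an orthonormal basis for any rotation $A$ (which, combined with the basis-independence you established in (d), is what justifies the constancy of $Z_k(\zeta,\zeta)$); both are routine.
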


\subsection{Reproducing Kernels and the Operators $D^t_s$}\label{kernels-dst}
It is well-known that the weighted harmonic Bergman spaces $b^{2}_\alpha$ for  $\alpha>-1$ is a reproducing kernel Hilbert space with reproducing
kernel $R_\alpha(x,y)$:
\begin{equation}\label{Reproducing-Bergman}
u(x)=\int_{\mathbb{B}} u(y) R_\alpha(x,y)\, d\nu_\alpha(y) \qquad \forall u\in b^2_\alpha, \ \forall x\in \mathbb{B} \quad (\alpha>-1).
\end{equation}
The homogeneous expansion of $R_\alpha(x,y)$ can be expressed in terms of zonal harmonics (see \cite{DS}, \cite{M})
\[
R_\alpha(x,y)=\sum_{k=0}^\infty \frac{(1+n/2+\alpha)_k}{(n/2)_k} Z_k(x,y)=:\sum_{k=0}^\infty \gamma_k(\alpha) Z_k(x,y), \qquad (\alpha>-1),
\]
where the series absolutely and uniformly converges on $K\times \mathbb{B}$, for any compact subset $K$ of $\mathbb{B}$. $R_{\alpha}(x,y)$ is real-valued, symmetric in the variables $x$ and $y$ and harmonic with respect to each variable since the same is true
for all $Z_k(x,y)$. Notice that the coefficients $\gamma_k(\alpha)$ make sense as long as $\alpha>-(1+n/2)$, and they satisfy
\begin{equation}\label{gamma_k}
\gamma_k(\alpha)\sim k^{\alpha+1} \quad (k\to \infty),
\end{equation}
for all such $k$ by (\ref{Stirling}).

The reproducing kernels $R_\alpha(x,y)$ can be extended to all $\alpha\in \mathbb{R}$ (see \cite{GKU1,GKU2}), where the crucial point is not the precise form of the kernel but preserving the property (\ref{gamma_k}).

\begin{definition}\label{Rq - Series expansion}
Let $\alpha\in\mathbb{R}$. Define
\[
\gamma_k(\alpha):= \begin{cases}
\dfrac{(1+n/2+\alpha)_k}{(n/2)_k}, &\text{if $\, \alpha>-(1+n/2)$}; \\
\noalign{\medskip}
\dfrac{((1)_k)^2}{(1-(n/2+\alpha))_k (n/2)_k}, &\text{if $\, \alpha\leq -(1+n/2)$};
\end{cases}
\]
and
\begin{equation}\label{Rq - Series expansion1}
\displaystyle R_\alpha(x,y):=\sum_{k=0}^\infty \gamma_k(\alpha) Z_k(x,y).
\end{equation}
\end{definition}
Checking the two cases above by (\ref{Stirling}), the property (\ref{gamma_k}) holds for all $\alpha\in\mathbb{R}$.
$R_\alpha(x,y)$ given as in Definition \ref{Rq - Series expansion} is a reproducing kernel and generates the reproducing
kernel Hilbert space $b^{2}_\alpha$ on $ \mathbb{B } $ for all $\alpha\in \mathbb{R}$ (see \cite{GKU2}).

For every $\alpha\in \mathbb{R}$ we have $\gamma_{0} (\alpha)=1$ and therefore with Lemma \ref{Lemma-Zk} (b),
\begin{equation}\label{Rq(x,0)}
R_\alpha(x,0)=R_\alpha(0,y)=1, \quad \forall x,y\in \mathbb{B} \quad (\forall \alpha\in \mathbb{R}).
\end{equation}

 $R_\alpha(x,y)$ is harmonic as a function of either of its
variables  on $\overline{\mathbb{B}}$ and if $K\subset \mathbb{B}$ is compact and $m$ is a multi-index
\begin{equation}\label{Rq-uniformly bounded}
| \partial^m R_\alpha(x,y) | \lesssim 1, \quad \forall x\in K, \, y\in \overline{\mathbb{B}},
\end{equation}
where differentiation is performed in the first variable.

The radial differential operator $D^t_s$ acts as a coefficient multiplier on the homogeneous expansion of a harmonic function and is defined in the
following way (see \cite{GKU1, GKU2}).

\begin{definition}
Let $u=\sum_{k=0}^\infty u_k\in h(\mathbb{B})$ be given by its homogeneous expansion. For $s,t\in\mathbb{R}$, we define radial differential operators $D_s^t : h(\mathbb{B}) \to h(\mathbb{B})$ by
\begin{equation}\label{Define-Dst}
  D_s^t u := \sum_{k=0}^\infty \frac{\gamma_k(s+t)}{\gamma_k(s)} \, u_k.
\end{equation}
\end{definition}

%It is shown in \cite{GKU2} that
 By (\ref{gamma_k}), $\gamma_k(s+t)/\gamma_k(s) \sim k^t$ and therefore roughly speaking $D^t_s$ multiplies the $k^{th}$ homogeneous part of $u$ by $k^t$.
  When $t>0$ the operator $D^t_s$ acts as a differential operator and when $t<0$ as an integral operator. For every $s\in \mathbb{R}$, $D_s^0=I$, the identity. The parameter $s$ plays a minor role and is used to have the precise relation
\begin{equation}\label{Dst - Rs}
D_s^t R_s(x,y)=R_{s+t}(x,y),
\end{equation}
where differentiation is performed on either of the variables $x$ or $y$ and by symmetry
it does not matter which. Compared to partial or radial derivatives an important property of $D^t_s$ is that it is invertible with two-sided inverse $D_{s+t}^{-t}$:
\begin{equation}\label{inverse of Dst}
D^{-t}_{s+t} D^t_s = D^t_s D^{-t}_{s+t} = I,
\end{equation}
which follows from the additive property
\begin{equation}\label{Additive-Dst}
D_{s+t}^{z} D_s^t = D_s^{z+t}.
\end{equation}

 The following lemma is Theorem 3.2 of \cite{GKU2}
 \begin{lemma}\label{Lemma-cont-Dst}
 Equip $h(\mathbb{B})$ with the topology of uniform convergence on compact subsets. Then for every $s,t \in \mathbb{R}$, the map $D^t_s: h(\mathbb{B})\to h(\mathbb{B})$ is continuous.
\end{lemma}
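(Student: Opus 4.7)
The plan is to exploit the fact that $D^t_s$ multiplies the $k^{\text{th}}$ homogeneous part of $u$ by the scalar $\gamma_k(s+t)/\gamma_k(s)$, combined with the classical observation that each homogeneous component $u_k$ is controlled by the sup-norm of $u$ on any interior sphere. Since $D^t_s$ is linear on $h(\mathbb{B})$, it suffices to establish the quantitative estimate
\[
\sup_{x \in K} |D^t_s u(x)| \leq C \sup_{\rho\mathbb{S}} |u|, \qquad u \in h(\mathbb{B}),
\]
for each compact $K \subset \mathbb{B}$, where $\rho \in (0,1)$ and $C = C(K,\rho,s,t)$ are to be chosen. Applied to $u_n - u$, and using that $\rho\mathbb{S}$ is itself compact in $\mathbb{B}$, such an inequality immediately forces $D^t_s u_n \to D^t_s u$ uniformly on $K$.

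To establish it, I would choose $\rho$ with $\max_{x \in K}|x| < \rho < 1$ and use the homogeneous expansion $u = \sum_k u_k$. On $\mathbb{S}$ one has $u(\rho\eta) = \sum_k \rho^k u_k(\eta)$ with uniform convergence in $\eta$, and the reproducing and orthogonality properties of zonal harmonics (Lemma \ref{Lemma-Zk}(e)--(f)) give the recovery formula
\[
u_k(x) = \rho^{-k}\int_{\mathbb{S}} u(\rho\eta)\, Z_k(x,\eta)\, d\sigma(\eta).
\]
Combined with the pointwise bound $|Z_k(x,\eta)| \lesssim (1+k)^{n-2}|x|^k$ from Lemma \ref{Lemma-Zk}(c), this yields
\[
|u_k(x)| \lesssim \rho^{-k}|x|^k (1+k)^{n-2} \sup_{\rho\mathbb{S}}|u|.
\]

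Next I would combine this with the multiplier bound $|\gamma_k(s+t)/\gamma_k(s)| \lesssim (1+k)^{\max(t,0)}$, which follows from (\ref{gamma_k}) and Stirling, to majorize the formal series defining $D^t_s u$ by
\[
\sup_{\rho\mathbb{S}}|u|\cdot \sum_{k=0}^{\infty}(1+k)^{\max(t,0)+n-2}\Big(\frac{|x|}{\rho}\Big)^k, \qquad x \in K.
\]
Since $|x|/\rho \leq (\max_K|x|)/\rho < 1$ uniformly in $x \in K$, this majorizing series converges and supplies the desired constant $C$; the same absolute and uniform convergence also justifies identifying the rearranged sum with $D^t_s u(x)$ on $K$. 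The only step that calls for vigilance is precisely this majorization: the argument hinges on geometric decay $(|x|/\rho)^k$ dominating the polynomial growth in $k$, which is why $\rho$ must be chosen strictly larger than $\max_K |x|$.
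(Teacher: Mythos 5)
Your argument is correct. Note that the paper itself offers no proof of this lemma --- it is quoted verbatim as Theorem~3.2 of \cite{GKU2} --- so there is nothing internal to compare against; but your chain of estimates (recovering $u_k$ from $u|_{\rho\mathbb{S}}$ via Lemma~\ref{Lemma-Zk}(e)--(f), bounding $|u_k(x)|\lesssim \rho^{-k}|x|^k(1+k)^{n-2}\sup_{\rho\mathbb{S}}|u|$, and absorbing the polynomial multiplier growth $\gamma_k(s+t)/\gamma_k(s)\sim k^t$ into the geometric factor $(|x|/\rho)^k$) is exactly the standard coefficient-multiplier proof, and it is sound: all $\gamma_k$ are nonzero by Definition~\ref{Rq - Series expansion}, and the resulting seminorm bound $\sup_K|D^t_s u|\leq C\sup_{\rho\mathbb{S}}|u|$ gives continuity directly in the Fr\'echet topology of $h(\mathbb{B})$. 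As a bonus, the same locally uniform absolute convergence shows that $D^t_s u$ is itself harmonic, i.e.\ that the operator is well defined as a map into $h(\mathbb{B})$.
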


 We can push $D^t_s$ into some certain integrals. The following Lemma and the corollary after that (the case $c=s$ ) are taken from \cite{DU1}.

\begin{lemma}\label{Lemma-Push-Dst}
Let $c\in \mathbb{R}$ and $f\in L_c^1$. For every $s,t \in \mathbb{R}$,
\begin{equation*}
D^t_s \int_{\mathbb{B}} R_c(x,y) f(y) d\nu_c(y) = \int_{\mathbb{B}} D^t_s R_c(x,y) f(y) d\nu_c(y).
\end{equation*}
\end{lemma}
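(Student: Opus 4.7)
The proof will proceed by unwinding the definition of $D^t_s$ in terms of homogeneous expansions and then justifying two interchanges of sum and integral via Fubini.

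\emph{Step 1: Identify the homogeneous expansion of the integral.} Set $F(x):=\int_{\mathbb{B}} R_c(x,y) f(y)\, d\nu_c(y)$ and substitute the series (\ref{Rq - Series expansion1}) for $R_c$. For $x$ in a compact set $K\subset\mathbb{B}$ with $r:=\max_{x\in K}|x|<1$, Lemma \ref{Lemma-Zk}(c) gives $|Z_k(x,y)|\lesssim |x|^k|y|^k k^{n-2}\leq r^k k^{n-2}$ uniformly in $y\in\overline{\mathbb{B}}$, and (\ref{gamma_k}) yields $|\gamma_k(c)|\lesssim k^{c+1}$. Hence
\begin{equation*}
\sum_{k=0}^\infty |\gamma_k(c) Z_k(x,y)| \;\lesssim\; \sum_{k=0}^\infty r^k k^{n+c-1} < \infty
\end{equation*}
uniformly in $y$, and so $\sum_k|\gamma_k(c) Z_k(x,y) f(y)|$ is in $L^1(d\nu_c)$. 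Fubini then gives
\begin{equation*}
F(x) = \sum_{k=0}^\infty F_k(x), \qquad F_k(x) := \gamma_k(c)\int_{\mathbb{B}} Z_k(x,y) f(y)\, d\nu_c(y).
\end{equation*}
Because $Z_k(\cdot,y)$ is a homogeneous harmonic polynomial of degree $k$ (Lemma \ref{Lemma-Zk}(a),(e)), each $F_k$ is homogeneous harmonic of degree $k$, so this is the homogeneous expansion of $F$ (converging uniformly on compacta of $\mathbb{B}$).

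\emph{Step 2: Apply the definition of $D^t_s$.} By (\ref{Define-Dst}),
\begin{equation*}
D^t_s F(x) = \sum_{k=0}^\infty \frac{\gamma_k(s+t)}{\gamma_k(s)}\, F_k(x) = \sum_{k=0}^\infty \frac{\gamma_k(s+t)}{\gamma_k(s)}\, \gamma_k(c)\int_{\mathbb{B}} Z_k(x,y) f(y)\, d\nu_c(y).
\end{equation*}

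\emph{Step 3: Expand the other side by the same Fubini argument.} From the series for $R_c$ and the definition (\ref{Define-Dst}) applied in the $x$-variable, $D^t_s R_c(x,y)=\sum_k \frac{\gamma_k(s+t)}{\gamma_k(s)}\gamma_k(c) Z_k(x,y)$. The extra factor $\gamma_k(s+t)/\gamma_k(s)\sim k^t$ only inserts a polynomial weight into the bound of Step 1, which is absorbed by $r^k$; hence $\sum_k|\frac{\gamma_k(s+t)}{\gamma_k(s)}\gamma_k(c) Z_k(x,y) f(y)|\in L^1(d\nu_c)$ and Fubini yields
\begin{equation*}
\int_{\mathbb{B}} D^t_s R_c(x,y) f(y)\, d\nu_c(y) = \sum_{k=0}^\infty \frac{\gamma_k(s+t)}{\gamma_k(s)}\, \gamma_k(c)\int_{\mathbb{B}} Z_k(x,y) f(y)\, d\nu_c(y),
\end{equation*}
which matches the expression for $D^t_s F(x)$ from Step 2.

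\emph{Main obstacle.} The only real work is justifying the two applications of Fubini. The combination of the zonal harmonic estimate in Lemma \ref{Lemma-Zk}(c) with the Stirling asymptotic (\ref{gamma_k}) makes this essentially a geometric-series bound in $r<1$, so the estimates are routine but must be done on each compact $K\subset\mathbb{B}$ separately since the bound degenerates as $|x|\to 1$. A secondary point worth checking explicitly is that the series in Step 1 really is the homogeneous expansion of $F$, which hinges on the harmonicity and $k$-homogeneity of $Z_k(\cdot,y)$.
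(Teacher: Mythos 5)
Your proof is correct. The paper does not prove this lemma itself but defers it to \cite{DU1}, and the argument there is essentially the one you give: expand $R_c(x,y)$ in zonal harmonics, use Lemma \ref{Lemma-Zk}(c) together with (\ref{gamma_k}) to get a geometric bound $\sum_k r^k k^{n+c-1}$ on compacta (with the harmless extra factor $k^t$ for $D^t_s R_c$), and interchange sum and integral term by term.
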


\begin{corollary}\label{Corollary-Push-Dst}
  Let $s \in \mathbb{R}$ and $f\in L^1_s$. For every $t\in \mathbb{R}$,
\begin{equation*}
  D^t_s \int_{\mathbb{B}} R_s(x,y) f(y) d\nu_s(y) = \int_{\mathbb{B}} R_{s+t}(x,y) f(y) d\nu_s(y).
\end{equation*}
\end{corollary}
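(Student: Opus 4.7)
The corollary is essentially a direct specialization of Lemma \ref{Lemma-Push-Dst} combined with the key identity \eqref{Dst - Rs}, so the proof should be very short and the plan is almost mechanical.

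First, I would observe that the hypothesis $f \in L^1_s$ is exactly the hypothesis of Lemma \ref{Lemma-Push-Dst} with the choice $c = s$. Applying that lemma with $c = s$ immediately yields
\begin{equation*}
D^t_s \int_{\mathbb{B}} R_s(x,y) f(y) \, d\nu_s(y) = \int_{\mathbb{B}} D^t_s R_s(x,y) \, f(y) \, d\nu_s(y),
\end{equation*}
where the operator $D^t_s$ on the right-hand side is understood to act on the $x$-variable inside the integrand.

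Next, I would invoke the defining relation \eqref{Dst - Rs}, namely $D^t_s R_s(x,y) = R_{s+t}(x,y)$ (it does not matter whether the differentiation is carried out in $x$ or $y$ by the symmetry of the kernel, but here we use the $x$-variable version). Substituting this identity into the integrand produces exactly the right-hand side of the desired equality,
\begin{equation*}
\int_{\mathbb{B}} R_{s+t}(x,y) \, f(y) \, d\nu_s(y),
\end{equation*}
completing the argument.

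The only conceptual obstacle would be the justification of interchanging $D^t_s$ with the integral, but this is precisely the content of Lemma \ref{Lemma-Push-Dst}, which has already been established. Thus the corollary reduces to combining one lemma with one reproducing-kernel identity, and no additional technical work is required.
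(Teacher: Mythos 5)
Your proposal is correct and matches the paper's intent exactly: the paper presents this corollary as the case $c=s$ of Lemma \ref{Lemma-Push-Dst}, after which the identity \eqref{Dst - Rs}, $D^t_s R_s(x,y)=R_{s+t}(x,y)$, converts the integrand into the stated form. No further comment is needed.
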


In some cases we can write $D^t_s$ as an integral operator. It is well known that the reproducing formula (\ref{Reproducing-Bergman}) remains true for all $u\in b^1_\alpha$ $(\alpha>-1)$. Therefore if we apply $D^t_s$ to both sides of (\ref{Reproducing-Bergman}) and use the above corollary we obtain following result easily. See also \cite{DU1}.
\begin{corollary}\label{Corollary-Dst-Integral}
Let $s>-1$ and $u\in L^1_s \cap h(\mathbb{B})$. For every $t\in \mathbb{R}$,
\begin{equation}\label{Dst - Integral operator}
  D^t_s u(x) = \int_{\mathbb{B}} R_{s+t}(x,y) u(y) d\nu_s(y).
\end{equation}
\end{corollary}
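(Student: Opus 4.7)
The plan is to chain together the three ingredients that have just been assembled in the excerpt, so the proof is almost a one-line deduction. First I would invoke the reproducing formula (\ref{Reproducing-Bergman}), which a priori is stated only for $u \in b^2_s$, and extend it to $u \in b^1_s = L^1_s \cap h(\mathbb{B})$ when $s > -1$. This extension is the standard fact alluded to by the author just before the statement; one justifies it by approximating $u$ by its dilates $u_r(x) = u(rx)$, which lie in $h(\overline{\mathbb{B}})\subset b^2_s$, noting that $u_r \to u$ in $L^1_s$ and that for fixed $x$ the kernel $y \mapsto R_s(x,y)$ is bounded on $\mathbb{B}$, so one can pass to the limit under the integral sign. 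Thus for $x \in \mathbb{B}$,
\begin{equation*}
u(x) = \int_{\mathbb{B}} R_s(x,y)\, u(y)\, d\nu_s(y).
\end{equation*}

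Next I would apply the operator $D^t_s$ (acting in the variable $x$) to both sides. The left-hand side becomes $D^t_s u(x)$. For the right-hand side I would apply Corollary \ref{Corollary-Push-Dst} directly, with $f = u \in L^1_s$; this lets me push $D^t_s$ inside the integral and replaces the kernel $R_s(x,y)$ by $R_{s+t}(x,y)$ (equivalently one could apply Lemma \ref{Lemma-Push-Dst} and then use the identity (\ref{Dst - Rs}), namely $D^t_s R_s(x,y) = R_{s+t}(x,y)$, where it does not matter whether $D^t_s$ acts in $x$ or in $y$ thanks to the symmetry of the kernel). The result is exactly (\ref{Dst - Integral operator}).

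The only mildly delicate step is the very first one, extending the reproducing formula from $b^2_s$ to $b^1_s$; the rest is a direct substitution using results already stated in Section \ref{kernels-dst}. Since the author explicitly flags that this extension is well known and that Corollary \ref{Corollary-Push-Dst} is exactly the tool needed to interchange $D^t_s$ with the integral, the whole corollary reduces to writing down these two facts in sequence.
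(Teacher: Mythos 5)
Your proposal is correct and follows exactly the route the paper takes: it invokes the well-known extension of the reproducing formula (\ref{Reproducing-Bergman}) to $b^1_s$ for $s>-1$, applies $D^t_s$, and pushes it through the integral via Corollary \ref{Corollary-Push-Dst}. The only difference is that you supply a (valid) dilate-approximation argument for the $b^1_s$ reproducing formula, which the paper simply cites as known.
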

The operator $D^t_s$ as an integral operator as in (\ref{Dst - Integral operator}) appears in \cite{JP}.

Analogous to Theorem \ref{Theorem-Equiv-Bloch}, the spaces $b^p_\alpha$ for $1\leq p < \infty$  can equivalently be defined by using the operators $D^t_s$. Given $1\leq p < \infty$ and $\alpha\in\mathbb{R}$, pick $s,t\in\mathbb{R}$ such that $\alpha+pt>-1$. The harmonic Besov space $b^p_\alpha$ consists of all $u\in h(\mathbb{B})$ such that (see Theorem 1.2 of \cite{GKU2})
\[
\|u\|^p_{b^p_\alpha} = \|I^t_s u  \|^p_{L^p_\alpha}=\frac{1}{V_\alpha} \int_{\mathbb{B}} |D^t_s u(x)|^p (1-|x|^2)^{\alpha+pt} d\nu(x) <\infty.
\]
Strictly speaking, the norm depends on $s$ and $t$, but this is not mentioned as it is known that every choice of the pair $(s,t)$ leads to an equivalent norm.

For $1\leq p < \infty$, we have bounded harmonic projections from the $L^p_\alpha$ onto the $b^p_\alpha$ which provide integral representations
for elements of  $b^p_\alpha$.
\begin{definition}
  For $s\in \mathbb{R}$, the Bergman-Besov projection is
\begin{equation*}
  Q_s f(x) = \int_{\mathbb{B}} R_s(x,y) f(y) d\nu_s(y),
\end{equation*}
for suitable $f$.
\end{definition}

The following projection theorem for $b^p_\alpha$ spaces is Theorem 1.5 of \cite{GKU2}.

\begin{theorem}[\cite{GKU2}]\label{Theorem-Projection-Besov}
Let $1\leq p < \infty$ and $\alpha, s \in \mathbb{R}$. Then $Q_s : L^p_\alpha \to b^p_\alpha$ is bounded (and onto) if and only if
\begin{equation}\label{Projection-Besov-S}
\alpha+1< p(s+1).
\end{equation}
Given an $s$ satisfying (\ref{Projection-Besov-S}) if $t$ satisfies
\begin{equation}\label{Projection-T}
\alpha+pt>-1,
\end{equation}
then for $u\in b^p_\alpha$, we have
\begin{equation}\label{QsIst=f}
Q_s I^t_s u = \frac{V_{s+t}}{V_s} u.
\end{equation}
\end{theorem}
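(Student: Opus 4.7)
The plan is to establish the projection identity \eqref{QsIst=f} first and then read off surjectivity from it, using a separate Schur-test argument for boundedness. For the identity, I start with $u\in b^p_\alpha$ expanded in its homogeneous series $u=\sum_k u_k$, which converges uniformly on compacta. By definition $D^t_s u=\sum_k[\gamma_k(s+t)/\gamma_k(s)]u_k$, and \eqref{Rq - Series expansion1} gives $R_s(x,y)=\sum_k\gamma_k(s)Z_k(x,y)$. Expanding $Q_sI^t_s u(x)$ as a double sum and passing to polar coordinates, the cross terms vanish by Lemma \ref{Lemma-Zk}(e),(f); the remaining diagonal contributions have explicit Beta integrals in the radial direction, and the Pochhammer ratios in $\gamma_k(s)$ and $\gamma_k(s+t)$ collapse against those Beta factors to yield exactly $(V_{s+t}/V_s)\sum_k u_k(x)=(V_{s+t}/V_s)u(x)$. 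To justify the interchange of sum and integral for general $u\in b^p_\alpha$, I would first carry out the calculation on the dilates $u_r(y)=u(ry)$, whose expansion converges absolutely on $\overline{\mathbb{B}}$, and then pass to $r\uparrow 1$ using the continuity of $D^t_s$ on $h(\mathbb{B})$ (Lemma \ref{Lemma-cont-Dst}) together with dominated convergence controlled by $I^t_s u\in L^p_\alpha$.

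For the sufficiency of \eqref{Projection-Besov-S}, I would apply Schur's test to $Q_s$ acting on $L^p_\alpha$, using trial weights of the form $\varphi(x)=(1-|x|^2)^{-b}$ for a parameter $b$ to be selected. The two Schur integrals reduce to estimates of $\int_{\mathbb{B}}|R_s(x,y)|(1-|y|^2)^c\,d\nu(y)$, which by standard Forelli--Rudin type arguments evaluate to a prescribed power of $1-|x|^2$ depending on $s$ and $c$. The set of admissible $b$ turns out to be nonempty exactly when $\alpha+1<p(s+1)$. Necessity of the same condition comes from testing $Q_s$ on concrete functions, for example $f(y)=(1-|y|^2)^a$ with $a$ chosen to put $\|f\|_{L^p_\alpha}$ just inside the integrability threshold; unless \eqref{Projection-Besov-S} holds, one can arrange $f\in L^p_\alpha$ with $Q_s f\notin L^p_\alpha$. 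Surjectivity then drops out of \eqref{QsIst=f}: given $u\in b^p_\alpha$ and $t$ with $\alpha+pt>-1$, Theorem \ref{Theorem-Equiv-Bloch} places $I^t_s u$ in $L^p_\alpha$, and the identity exhibits $u$ as $(V_s/V_{s+t})Q_s(I^t_s u)$.

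The main obstacle is the uniform handling of the kernel $R_s(x,y)$ as $s$ ranges over all of $\mathbb{R}$. For $s>-(1+n/2)$ one has the familiar closed-form coefficients leading to sharp pointwise asymptotics of $|R_s|$ in terms of $[x,y]$, but for $s\leq -(1+n/2)$ the alternative coefficient formula of Definition \ref{Rq - Series expansion} forces one to argue via the zonal harmonic bound $|Z_k(x,y)|\lesssim|x|^k|y|^k k^{n-2}$ from Lemma \ref{Lemma-Zk}(c) combined with careful Beta-function asymptotics of the $\gamma_k(s)$. Reconciling these two regimes so that the borderline in \eqref{Projection-Besov-S} is sharply attained, and ensuring the Forelli--Rudin type estimates remain uniform across the threshold, is the most delicate technical point of the argument.
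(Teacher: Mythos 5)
First, a point of comparison: the paper does not prove this statement at all; it is imported verbatim as Theorem 1.5 of \cite{GKU2}, so there is no in-paper proof to measure your argument against. Judged on its own terms, your outline of the identity \eqref{QsIst=f} is essentially the right computation: in the double sum the kernel contributes $\gamma_k(s)$, $D^t_s$ contributes $\gamma_k(s+t)/\gamma_k(s)$, these cancel to leave $\gamma_k(s+t)$, and the radial Beta integral $\tfrac n2 B(n/2+k,\,s+t+1)$ combines with $\gamma_k(s+t)=(1+n/2+s+t)_k/(n/2)_k$ to give exactly $V_{s+t}$. Two details you should make explicit: (i) the collapse works because \eqref{Projection-Besov-S} and \eqref{Projection-T} together force $s+t>-1$, so $\gamma_k(s+t)$ is in the Pochhammer regime of Definition \ref{Rq - Series expansion} and the integral converges; (ii) to even write $Q_s I^t_s u$ you need $I^t_s u\in L^1_s$, which follows from $I^t_s u\in L^p_\alpha$ by H\"older precisely when $\alpha+1<p(s+1)$ --- this is where \eqref{Projection-Besov-S} enters the identity, not just the boundedness.

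The genuine gap is in the boundedness argument. You run the Schur test on $Q_s$ as an operator from $L^p_\alpha$ to $L^p_\alpha$, but for $\alpha\leq-1$ membership in $b^p_\alpha$ is not measured by the $L^p_\alpha$ norm of $Q_sf$ itself; the quasinorm is $\|I^{t}_{s}Q_sf\|_{L^p_\alpha}$ for some $t$ with $\alpha+pt>-1$. So the operator to which the Schur/Forelli--Rudin machinery must be applied has kernel $(1-|x|^2)^{t}R_{s+t}(x,y)(1-|y|^2)^{s}/V_s$ (obtained by pushing $D^t_s$ through the integral via Lemma \ref{Lemma-Push-Dst} and using \eqref{Dst - Rs}), not $R_s(x,y)(1-|y|^2)^s$. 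This reformulation also dissolves what you flag as the main obstacle: since $t$ may be taken as large as you like, $s+t$ always lies in the regime where Lemma \ref{Lemma-Kernel-Estimate} gives the clean bound $|R_{s+t}(x,y)|\lesssim[x,y]^{-(n+s+t)}$ and Lemma \ref{Integral-[x,y]} applies, so no separate analysis of $R_s$ for $s\leq-(1+n/2)$ is needed on the sufficiency side (and there is no borderline case to "attain sharply," since \eqref{Projection-Besov-S} is a strict inequality). Your necessity sketch via test functions $(1-|y|^2)^a$ and the derivation of surjectivity from \eqref{QsIst=f} are fine.
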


By (\ref{QsIst=f}) we have the following integral representation: For $u\in b^{p}_\alpha$, if (\ref{Projection-Besov-S}) and (\ref{Projection-T}) holds, then
\begin{equation}\label{Reproducing}
u(x)=\frac{V_s}{V_{s+t}}\int_{\mathbb{B}} R_s(x,y) I^t_s u(y) \, d\nu_s(y) = \int_{\mathbb{B}} R_s(x,y) D^t_s u(y) \, d\nu_{s+t}(y).
\end{equation}
The integral representation is very useful and many properties of the spaces $b^p_\alpha$ when $1\leq p < \infty$ are extracted from this in \cite{GKU2}.

\subsection{Estimates of Reproducing Kernels}
In this subsection, we present some  properties and estimates on the derivatives of reproducing kernels  $R_\alpha(x,y)$  which we need in later sections. The reproducing kernels $R_\alpha(x,y)$ are well-studied when $\alpha>-1$ by various authors.  For the case $\alpha\in\mathbb{R}$ we refer to \cite{GKU2}.

The $\alpha\geq -1$ part of the the following pointwise estimate is proved in many places including \cite{CKY, JP,R}. For a proof  when $\alpha\in\mathbb{R}$ see \cite{GKU2}.
\begin{lemma}\label{Lemma-Kernel-Estimate}
Let $\alpha\in\mathbb{R}$ and $m$ be a multi-index. Then for every $x\in \mathbb{B}$, $y\in \overline{\mathbb{B}}$,
\begin{equation*}
\big|(\partial^m R_\alpha)(x,y)\big|
\lesssim\begin{cases}
1,&\text{if $\, \alpha+|m|<-n$};\\
1+\log \dfrac{1}{[x,y]},&\text{if $\, \alpha+|m|=-n$};\\
\dfrac{1}{[x,y]^{n+\alpha+|m|}},&\text{if $\, \alpha+|m|>-n$}.
\end{cases}
\end{equation*}
\end{lemma}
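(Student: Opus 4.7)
The plan is to prove the estimate first for a convenient ``base'' parameter $\alpha_0 > -1$ where $R_{\alpha_0}(x,y)$ admits a tractable closed form, and then transfer to arbitrary $\alpha \in \mathbb{R}$ via the radial differential operators $D^t_s$ combined with standard Forelli--Rudin type integral estimates. For the base case I would sum the series $R_{\alpha_0}(x,y) = \sum_{k=0}^\infty \gamma_k(\alpha_0) Z_k(x,y)$ using the Gegenbauer generating function: since $Z_k(x,y)$ is a normalizing constant times $C^{(n-2)/2}_k\bigl((x\cdot y)/(|x||y|)\bigr)(|x||y|)^k$ and $\gamma_k(\alpha_0) = (1+n/2+\alpha_0)_k/(n/2)_k$ fits a hypergeometric pattern, the series sums to an explicit expression in $[x,y]$, essentially a constant multiple of $[x,y]^{-(n+\alpha_0)}$ up to lower-order corrections. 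Differentiating this closed form $|m|$ times yields the pointwise bound $|\partial^m R_{\alpha_0}(x,y)| \lesssim [x,y]^{-(n+\alpha_0+|m|)}$ whenever $\alpha_0 + |m| > -n$.

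To pass from $\alpha_0$ to a general $\alpha \in \mathbb{R}$, I would exploit the identity $D^t_s R_s(x,y) = R_{s+t}(x,y)$ from (\ref{Dst - Rs}) together with the integral representation of Corollary~\ref{Corollary-Dst-Integral} (applicable when $s > -1$). For conveniently chosen auxiliary parameters, $R_\alpha(x,y)$ can be written as an integral of $R_{\alpha_0}(x,z)$ against a further kernel in $z,y$ that is likewise controlled by a power of $[z,y]$. The standard estimate
\[
\int_{\mathbb{B}} \frac{(1-|z|^2)^c}{[x,z]^{a}\,[z,y]^{b}}\, d\nu(z) \lesssim \frac{1}{[x,y]^{a+b-n-1-c}}
\]
(valid for $c > -1$ and $a + b > n+1+c$, with a logarithmic correction at the threshold and a uniformly bounded integral below it) then produces the sharp pointwise bound for $\partial^m_x R_\alpha(x,y)$ in the supercritical regime $\alpha + |m| > -n$. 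A finite number of such transfer steps, using the additive property $D^z_{s+t} D^t_s = D^{z+t}_s$ from (\ref{Additive-Dst}), suffices to reach every real $\alpha$ and every $m$.

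The two remaining regimes are easier. When $\alpha + |m| < -n$, the series $\sum_k \gamma_k(\alpha)\, \partial^m_x Z_k(x,y)$ converges absolutely and uniformly on $\overline{\mathbb{B}} \times \overline{\mathbb{B}}$: since $\gamma_k(\alpha) \sim k^{\alpha+1}$ and $|\partial^m_x Z_k(x,y)| \lesssim k^{n-2+|m|}$ on the closed bi-ball, the $k$-th term is $O(k^{\alpha+n+|m|-1})$, summable in exactly this range. At the threshold $\alpha + |m| = -n$, the logarithmic gain is produced by the critical-exponent case of the Forelli--Rudin estimate inside the transfer step above. The main technical obstacle is obtaining bounds sharp in $[x,y]$ rather than the weaker $1 - |x||y|$: the obvious triangle-inequality estimate on $|Z_k|$ only yields the weaker version, so the sharpness must be extracted either from the closed-form expression via the Gegenbauer identity in the base case or from the geometry of $[x,y]$ inside the Forelli--Rudin integral.
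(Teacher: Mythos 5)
First, a point of comparison: the paper does not prove this lemma at all --- it is quoted from the literature (the sentence preceding the statement cites \cite{CKY,JP,R} for $\alpha\geq-1$ and \cite{GKU2} for general $\alpha\in\mathbb{R}$), so your attempt can only be measured against those sources, not against an in-paper argument. Within your proposal, the regime $\alpha+|m|<-n$ handled by absolute summation of the termwise-differentiated series is correct and complete, and you correctly isolate the central difficulty: crude bounds on $|Z_k|$ only yield decay in $1-|x||y|$, not in $[x,y]$.

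The transfer step, however, contains a genuine gap: it is circular as described. Applying Corollary \ref{Corollary-Dst-Integral} in the $x$-variable to $u=R_{\alpha_0}(\cdot,y)$ with $s=\alpha_0>-1$ and $s+t=\alpha$ gives
\[
R_\alpha(x,y)=D^{\alpha-\alpha_0}_{\alpha_0}R_{\alpha_0}(x,y)=\int_{\mathbb{B}}R_{\alpha}(x,z)\,R_{\alpha_0}(z,y)\,d\nu_{\alpha_0}(z),
\]
which is just the reproducing formula: the unknown kernel sits inside the integral and the known one is being reproduced. More generally, any identity $R_\alpha(x,y)=\int_{\mathbb{B}} R_{\alpha_0}(x,z)K(z,y)\,d\nu_{\alpha_0}(z)$ with $K(\cdot,y)$ harmonic and integrable forces $K(\cdot,y)=R_\alpha(\cdot,y)$ by the reproducing property of $R_{\alpha_0}$, so the ``further kernel controlled by a power of $[z,y]$'' that your bilinear estimate requires is exactly the kernel you are trying to bound. (Your bilinear Forelli--Rudin exponent $a+b-n-1-c$ is also off by one for an $n$-dimensional volume integral; the ``$-1$'' belongs to the one-dimensional radial estimate of Lemma \ref{Lemma Estimate Integral wrt t}.) The standard repair is precisely that one-dimensional device: represent the shift in the index by $k^{-\beta}\sim\int_0^1(1-r)^{\beta-1}r^k\,dr$ so that $\sum_k\gamma_k(\alpha)Z_k(x,y)$ becomes a radial integral of a kernel with a genuine closed form (ultimately the extended Poisson kernel $\sum_kZ_k(x,y)=(1-|x|^2|y|^2)/[x,y]^n$), and then Lemma \ref{Lemma Estimate Integral wrt t} produces the sharp power of $[x,y]$ together with the logarithm at the critical exponent --- the same mechanism the paper itself uses in the proof of (c) $\Rightarrow$ (a) of Lemma \ref{Lemma N=1}. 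One must also treat the direction of increasing $\alpha$ separately, since $D^t_s$ with $t>0$ is not an integral operator. Finally, the ``base case'' closed form for a general $\alpha_0>-1$ is asserted rather than derived; a clean Gegenbauer summation exists only for special parameters, and the general base case already needs the radial-integral argument above.
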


For any $s,t\in\mathbb{R}$, exactly the same upper bounds hold for $D^t_s R_\alpha(x,y)$, so we have the following estimate.

\begin{lemma}\label{Lemma-Estimate-Dst-Kernel}
Let $\alpha,s,t\in\mathbb{R}$ and $m$ be a multi-index. Then for every $x\in \mathbb{B}$, $y\in \overline{\mathbb{B}}$,
\begin{equation*}
\big|\partial^m (D^t_s R_\alpha)(x,y)\big|
\lesssim\begin{cases}
1,&\text{if $\, \alpha+t+|m|<-n$};\\
1+\log \dfrac{1}{[x,y]},&\text{if $\, \alpha+t+|m|=-n$};\\
\dfrac{1}{[x,y]^{n+\alpha+t+|m|}},&\text{if $\, \alpha+t+|m|>-n$}.
\end{cases}
\end{equation*}
\end{lemma}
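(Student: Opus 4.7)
The plan is to reduce the bound to Lemma \ref{Lemma-Kernel-Estimate} applied to $R_{\alpha+t}$ together with finitely many lower-order kernels $R_{\alpha+t-j}$. The guiding observation is that the zonal expansions of $D^t_s R_\alpha$ and $R_{\alpha+t}$ have the same leading asymptotic and differ only by corrections of strictly lower order.

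First I would write out
\[
D^t_s R_\alpha(x,y) = \sum_{k=0}^{\infty} \mu_k \, Z_k(x,y), \qquad \mu_k := \frac{\gamma_k(s+t)\, \gamma_k(\alpha)}{\gamma_k(s)},
\]
and note from (\ref{Stirling}) that $\mu_k / \gamma_k(\alpha+t) \to 1$, with a Stirling-type asymptotic expansion in powers of $1/k$. The next step is to convert this into an identity of the form
\[
\mu_k = \sum_{j=0}^{J} c_j \, \gamma_k(\alpha + t - j) + \varepsilon_k^{(J)}, \qquad |\varepsilon_k^{(J)}| \lesssim k^{\alpha+t-J},
\]
valid for every $J \in \mathbb{N}$, where $c_0 = 1$ and the constants $c_j$ depend only on $\alpha, s, t, J$. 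Substituting into the series gives
\[
D^t_s R_\alpha(x,y) = \sum_{j=0}^{J} c_j \, R_{\alpha+t-j}(x,y) + E_J(x,y),
\]
where $E_J(x,y) := \sum_{k=0}^\infty \varepsilon_k^{(J)} Z_k(x,y)$.

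Fixing the multi-index $m$, I would then choose $J$ large enough that $n + |m| + \alpha + t - J - 2 < -1$. The crude bound $|\partial^m Z_k(x,y)| \lesssim k^{n-2+|m|}$ (from Lemma \ref{Lemma-Zk}(c) together with standard polynomial estimates) justifies termwise differentiation of $E_J$ and yields $|\partial^m E_J(x,y)| \lesssim 1$ uniformly on $\mathbb{B} \times \overline{\mathbb{B}}$. For the finitely many remaining terms $\partial^m R_{\alpha+t-j}$, Lemma \ref{Lemma-Kernel-Estimate} applies directly; since the exponent $\alpha+t-j+|m|$ is largest at $j=0$, the $j=0$ contribution dominates in each of the three regimes of the piecewise estimate, and the full bound follows.

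The main obstacle is the algebraic identity for $\mu_k$ in the second step. The difficulty is twofold: producing a uniform remainder bound $|\varepsilon_k^{(J)}| \lesssim k^{\alpha+t-J}$ of arbitrary order $J$ requires a careful Pochhammer/Stirling analysis, and since the definition of $\gamma_k(\beta)$ in Definition \ref{Rq - Series expansion} is piecewise, one must either verify the expansion in each regime of the parameters $s, s+t, \alpha, \alpha+t-j$ separately or use an asymptotic argument relying only on (\ref{Stirling}) which works uniformly across the cutoff $\beta = -(1+n/2)$.
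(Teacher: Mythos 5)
Your proposal takes a genuinely different route from the paper, which offers no proof at all for this lemma: it simply observes that the coefficients $\gamma_k(s+t)\gamma_k(\alpha)/\gamma_k(s)$ of $D^t_sR_\alpha$ obey the same asymptotics $\sim k^{\alpha+t+1}$ as the coefficients of $R_{\alpha+t}$, and that the proof of Lemma \ref{Lemma-Kernel-Estimate} in \cite{GKU2} uses only these asymptotics, so the bounds carry over verbatim. Your reduction --- matching $\mu_k$ by a finite linear combination $\sum_{j\le J}c_j\gamma_k(\alpha+t-j)$ plus a rapidly decaying remainder, so that $D^t_sR_\alpha$ becomes finitely many genuine kernels $R_{\alpha+t-j}$ plus a term with bounded derivatives --- is a sound and self-contained alternative that invokes Lemma \ref{Lemma-Kernel-Estimate} as a black box; it buys independence from the internals of the proof in \cite{GKU2}, at the cost of the Pochhammer bookkeeping you correctly identify as the main obstacle. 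Three caveats. First, $\mu_k/\gamma_k(\alpha+t)$ does \emph{not} tend to $1$ in general: since $\gamma_k(\beta)\sim C_\beta k^{\beta+1}$ with $C_\beta=\Gamma(n/2)/\Gamma(1+n/2+\beta)$ (or the analogous nonzero constant in the second branch of Definition \ref{Rq - Series expansion}), the limit is $C_{s+t}C_\alpha/(C_sC_{\alpha+t})$, so $c_0$ is this constant rather than $1$; this is harmless for the estimate. Second, the remainder bound $|\varepsilon_k^{(J)}|\lesssim k^{\alpha+t-J}$ of arbitrary order genuinely requires the full asymptotic expansion of gamma-function ratios (Tricomi--Erd\'elyi), which is strictly more than the leading-order relation (\ref{Stirling}) as stated; the matching is then solvable recursively because $\gamma_k(\alpha+t-j)/\gamma_k(\alpha+t)=k^{-j}(c+O(1/k))$ with $c\neq 0$ in either branch of the definition, so the system is triangular with nonvanishing diagonal --- this disposes of the piecewise-definition worry you raise. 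Third, the crude bound you quote for $\partial^mZ_k$ should be $k^{n-2+2|m|}$ if you only use Markov's inequality for polynomials of degree $k$ (a Bernstein-type inequality for spherical harmonics gives $k^{n-2+|m|}$), but either exponent merely changes the required size of $J$, and the domination of the $j=0$ term over the $j\ge1$ terms and over $E_J$ holds in all three regimes since $[x,y]$ is bounded above by $2$. With these repairs the argument is complete.
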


Pointwise upper bounds lead to integral upper bounds.  The next lemma gives a certain estimate on weighted integrals of powers of $R_\alpha(x,y)$.
When $\alpha>-1$ and $c>0$, it is proved in \cite[Proposition 8]{M}. For the whole range $\alpha\in \mathbb{R}$ see \cite[Theorem 1.5]{GKU2}.

\begin{lemma}\label{norm-kernel}
Let $\alpha\in \mathbb{R}$, $0<p<\infty$ and $b>-1$. Set $c=p(n+\alpha)-(n+b)$. Then
\[
\int_{\mathbb B}|R_\alpha(x,y)|^p\,(1-|y|^2)^b\,d\nu(y)
\sim\begin{cases}
1,&\text{if $c<0$};\\
\noalign{\medskip}
1+\log\dfrac1{1-|x|^2},&\text{if $c=0$};\\
\noalign{\medskip}
\dfrac1{(1-|x|^2)^c},&\text{if $c>0$}.
\end{cases}
\]
\end{lemma}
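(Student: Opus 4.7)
The plan is to prove the two-sided asymptotic by separate arguments for the upper and lower bounds. The upper bound will follow from the pointwise kernel estimate of Lemma~\ref{Lemma-Kernel-Estimate} combined with a classical Forelli--Rudin weighted-integral estimate, while the lower bound will be obtained by localizing to a thickened radial tent around $x/|x|$ on which the homogeneous expansion yields a sharp pointwise lower bound for the kernel.

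For the upper bound, split into two cases. When $\alpha \leq -n$, Lemma~\ref{Lemma-Kernel-Estimate} gives $|R_\alpha(x,y)|$ bounded uniformly in $y \in \overline{\mathbb{B}}$ (up to a logarithmic factor in $[x,y]$), hence the integral is bounded by a constant independent of $x$; in this range $c = p(n+\alpha) - (n+b) < 0$ since $b > -1$, giving the first case. When $\alpha > -n$, the same lemma yields $|R_\alpha(x,y)|^p \lesssim [x,y]^{-p(n+\alpha)}$, and the problem reduces to the classical Forelli--Rudin estimate
\[
\int_{\mathbb{B}} \frac{(1-|y|^2)^b}{[x,y]^{n+b+c}}\, d\nu(y) \sim \begin{cases} 1, & c < 0, \\ 1 + \log \dfrac{1}{1-|x|^2}, & c = 0, \\ (1-|x|^2)^{-c}, & c > 0, \end{cases}
\]
applied with $n + b + c = p(n+\alpha)$. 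This last estimate itself is obtained by slicing $\mathbb{B}$ into spheres $|y| = r$, evaluating the spherical average of $[x, r\zeta]^{-(n+b+c)}$, and integrating in $r$.

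For the lower bound, the case $c < 0$ is handled quickly: since $R_\alpha(x,0) = 1$ by~(\ref{Rq(x,0)}) and the homogeneous expansion together with Lemma~\ref{Lemma-Zk}(c) gives
\[
|R_\alpha(x,y) - 1| \lesssim \sum_{k \geq 1} k^{n + \alpha - 1}(|x||y|)^k,
\]
which can be made $\leq 1/2$ uniformly in $x \in \mathbb{B}$ by taking $|y| \leq r_0$ with $r_0$ small, the integrand exceeds a fixed positive constant on $\{|y| \leq r_0\}$, yielding a constant lower bound. For $c \geq 0$ (which forces $n + \alpha > 0$) and $|x|$ close to $1$, take the thickened tent $\widetilde{T}_x = \{y : |y - x| \leq \delta(1 - |x|^2)\}$ for some small $\delta > 0$. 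On $\widetilde{T}_x$ one has $[x,y] \sim 1 - |x|^2$, $(1 - |y|^2) \sim 1 - |x|^2$, and $\nu(\widetilde{T}_x) \sim (1 - |x|^2)^n$. Using $\gamma_k(\alpha) \sim k^{\alpha+1}$ and $Z_k(\zeta,\zeta) \sim k^{n-2}$ together with the Tauberian identity $\sum_k k^s r^k \sim (1-r)^{-(s+1)}$ as $r \to 1^-$ (valid for $s > -1$), one obtains $R_\alpha(x,x) \sim (1-|x|^2)^{-(n+\alpha)}$; the gradient estimate from Lemma~\ref{Lemma-Estimate-Dst-Kernel} (with $|m|=1$, $s=t=0$) then gives $|R_\alpha(x,y) - R_\alpha(x,x)| \lesssim \delta (1-|x|^2)^{-(n+\alpha)}$ on $\widetilde{T}_x$, so $|R_\alpha(x,y)| \sim (1-|x|^2)^{-(n+\alpha)}$ there for $\delta$ small. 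Integrating produces the $(1-|x|^2)^{-c}$ lower bound when $c > 0$; for $c = 0$, sum dyadic contributions from annular tents with $1 - |y|^2 \sim 2^{-j}(1 - |x|^2)$ over $j = 0, 1, \dots, \lfloor \log_2 (1/(1-|x|^2)) \rfloor$, each giving a constant summand and yielding the logarithmic factor.

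The delicate step will be the passage from the sharp radial asymptotic of $R_\alpha$ to a sharp lower bound on the thickened tent $\widetilde{T}_x$: a purely radial analysis produces only a set of $\nu$-measure zero, so the Lipschitz control supplied by Lemma~\ref{Lemma-Estimate-Dst-Kernel} is essential. A secondary technical point is verifying that the implicit constants in both $\gamma_k(\alpha) \sim k^{\alpha+1}$ and the Tauberian identity can be taken uniformly over any compact range of $\alpha$, so that the constants in the final estimate depend only on $\alpha$, $p$, $b$, and $n$, but not on $x$.
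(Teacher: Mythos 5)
First, note that the paper does not actually prove this lemma: it is quoted as known, with the $\alpha>-1$, $c>0$ case attributed to Miao and the general case to Gerg\"un--Kaptano\u{g}lu--\"Ureyen. So your argument is not competing with an in-paper proof but supplying one. Your upper bound (split at $\alpha\le -n$ versus $\alpha>-n$, then Lemma \ref{Lemma-Kernel-Estimate} followed by the Forelli--Rudin estimate of Lemma \ref{Integral-[x,y]} with $n+b+c=p(n+\alpha)$) is correct, and so are the lower bounds for $c<0$ (positivity of the integrand near the origin, using $R_\alpha(x,0)=1$ and smallness of the tail of the homogeneous expansion) and for $c>0$ (positivity of $\gamma_k(\alpha)Z_k(x,x)$ giving the two-sided diagonal asymptotic $R_\alpha(x,x)\sim(1-|x|^2)^{-(n+\alpha)}$, then Lipschitz control on the tent $\widetilde T_x$; the gradient bound you need is in the second variable, but symmetry of $R_\alpha$ makes Lemma \ref{Lemma-Kernel-Estimate} with $|m|=1$ applicable).

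The one step that fails as written is the $c=0$ lower bound. The regions you name, with $1-|y|^2\sim 2^{-j}(1-|x|^2)$, push $y$ \emph{toward} the boundary; for such $y$ one still has $[x,y]\sim 1-|x|^2$ (since $[x,y]^2=|x-y|^2+(1-|x|^2)(1-|y|^2)$ and $[x,y]\ge 1-|x|$), so the $j$-th region contributes about $(1-|x|^2)^{-c}\,2^{-j(n+b)}$, a geometrically decaying series whose sum is $O\bigl((1-|x|^2)^{-c}\bigr)=O(1)$ when $c=0$ --- no logarithm appears. The contributions that are each of constant size come from moving \emph{inward} along the radius through $x$: take $y$ near $r_j\,x/|x|$ with $1-r_j|x|\sim 2^{j}(1-|x|^2)$ for $j=0,\dots,J$, $2^J(1-|x|^2)\sim 1$. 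There $R_\alpha(x,r\zeta)=\sum_k\gamma_k(\alpha)(r|x|)^kZ_k(\zeta,\zeta)\sim(1-r|x|)^{-(n+\alpha)}$ by the same positivity and Abel-summation argument you use on the diagonal, the thickened ball of radius $\delta\,2^{j}(1-|x|^2)$ has measure $\sim\bigl(2^{j}(1-|x|^2)\bigr)^{n}$ and weight $\sim\bigl(2^{j}(1-|x|^2)\bigr)^{b}$, so each $j$ contributes $\sim\bigl(2^{j}(1-|x|^2)\bigr)^{-c}=1$, and summing over the $\sim\log\frac{1}{1-|x|^2}$ disjoint pieces gives the logarithm. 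With the dyadic direction corrected (and this same family of inward tents in fact also recovers your $c>0$ lower bound), the proof is complete.
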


Notice that the kernel $R_\alpha(x,y)$ is dominated by $1/[x,y]^{n+\alpha}$ by taking $|m|=0$ and $\alpha>-n$ in Lemma \ref{Lemma-Kernel-Estimate}. We will also need the following integral estimates of these dominating terms. For a proof see, for example, \cite[Proposition 2.2]{LS} or \cite[Lemma 4.4]{R}.

\begin{lemma}\label{Integral-[x,y]}
Let $b>-1$ and $s\in \mathbb{R}$. Then
\begin{equation*}
  \int_{\mathbb{B}} \frac{(1-|y|^2)^b}{[x,y]^{n+b+s}} \, d\nu(y) \sim
      \begin{cases}
         1, &\text{if $\, s<0$};\\
         1+\log \dfrac{1}{1-|x|^2}, &\text{if $\, s=0$}; \\
         \dfrac{1}{(1-|x|^2)^s}, &\text{if $\, s > 0$}.
     \end{cases}
\end{equation*}
\end{lemma}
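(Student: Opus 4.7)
The strategy is to reduce this ball integral to a one-dimensional radial integral by first carrying out the angular integration, and then to perform a standard asymptotic analysis as $|x| \to 1^-$.

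First I would switch to polar coordinates $y = r\zeta$ with $r\in[0,1)$ and $\zeta\in\mathbb{S}$. Using the identity $[x,y] = \bigl| |y|x - y/|y|\bigr| = |rx-\zeta|$ recorded in the preliminaries, the integral becomes
\[
n\int_0^1 r^{n-1}(1-r^2)^b \,\Phi(r|x|) \, dr,\qquad \Phi(\rho):= \int_{\mathbb{S}} \frac{d\sigma(\zeta)}{|\rho\,e-\zeta|^{n+b+s}},
\]
where $e=x/|x|$ and I have pushed out the factor $|x|$ using rotational invariance of $d\sigma$. Thus the first subtask is to prove
\[
\Phi(\rho) \sim \begin{cases} 1, & s+b+1<0;\\ 1+\log\dfrac{1}{1-\rho^2}, & s+b+1=0;\\ (1-\rho^2)^{-(s+b+1)}, & s+b+1>0.\end{cases}
\]

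To prove the spherical estimate, I would rotate so that $e=e_1$ and apply the standard disintegration $\int_{\mathbb{S}} f(\zeta_1)\,d\sigma = c_n\int_{-1}^{1} f(t)(1-t^2)^{(n-3)/2}\,dt$. Since $|\rho e_1-\zeta|^2 = (1-\rho)^2 + 2\rho(1-\zeta_1)$, the substitution $u = 1-t$ turns $\Phi(\rho)$ into a constant multiple of
\[
\int_0^2 \bigl((1-\rho)^2+2\rho u\bigr)^{-(n+b+s)/2}\, u^{(n-3)/2}(2-u)^{(n-3)/2}\,du.
\]
Splitting the integration at the crossover $u\sim (1-\rho)^2$ (where the two terms in the denominator balance) and applying elementary bounds on each piece yields the three-case estimate above; this is where most of the technical work sits.

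Finally I would substitute the estimate for $\Phi(\rho)$ back into the radial integral. If $s+b+1 \leq 0$, then $s<0$ and $\Phi$ is bounded (or logarithmic) while $(1-r^2)^b$ is integrable on $[0,1]$ because $b>-1$, so the total is $\sim 1$. If $s+b+1>0$, the problem reduces to estimating $\int_0^1 r^{n-1}(1-r^2)^b (1-r^2|x|^2)^{-(s+b+1)}\,dr$. Writing $u=r^2$ and then splitting the interval of integration at $u=|x|^2$ (or equivalently comparing $(1-u)$ with $(1-|x|^2)$), a routine calculation—essentially the Forelli--Rudin one-variable lemma—gives the three regimes $\sim 1$, $\sim 1+\log(1/(1-|x|^2))$, $\sim (1-|x|^2)^{-s}$ according to the sign of $s$. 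Matching regimes across the two case analyses produces exactly the stated trichotomy.

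The main obstacle is the careful asymptotic analysis at the two crossover scales (one in the spherical integral, one in the radial integral); both reductions are standard but must be bookkept so that the edge cases $s+b+1=0$ and $s=0$ combine correctly with the logarithmic terms.
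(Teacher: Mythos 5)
Your argument is correct and complete in outline: the polar-coordinate reduction, the spherical estimate with crossover at $u\sim(1-\rho)^2$ giving the exponent $b+s+1$, and the final one-variable Forelli--Rudin computation all check out, including the bookkeeping that $b+s+1\le 0$ forces $s<0$ (since $b>-1$) so the regimes match the stated trichotomy. The paper does not prove this lemma itself but cites \cite[Proposition 2.2]{LS} and \cite[Lemma 4.4]{R}, and those proofs proceed by essentially the same standard two-step reduction you describe.
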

Integral operators involving $R_\alpha(x,y)$ or the above dominating terms are widely used in the study of Bergman spaces. We mention one more integral estimate. For a proof see  \cite[Lemma 4.2]{R}.

\begin{lemma}\label{Lemma Estimate Integral wrt t}
Let $b>-1$, $c>0$ and $x,y\in \mathbb{B}$. Then
\begin{equation*}
  \int_0^1 \frac{(1-\tau)^{b}}{[\tau x,y]^{1+b+c}}\, d\tau \lesssim \frac{1}{[x,y]^c}.
\end{equation*}
\end{lemma}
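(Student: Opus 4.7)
The plan is to reduce the kernel $[\tau x,y]$ to a concrete lower bound and then estimate the resulting one-dimensional integral.

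First I would derive the algebraic identity
\[
[\tau x, y]^2 = (1-\tau)^2 + 2\tau(1-\tau)(1-x\cdot y) + \tau^2[x,y]^2,
\]
which follows by expanding both sides using $[\tau x,y]^2 = 1-2\tau\, x\cdot y+\tau^2|x|^2|y|^2$. Since $x,y\in\mathbb{B}$ gives $1-x\cdot y\geq 1-|x||y|\geq 0$, the middle term is non-negative, so
\[
[\tau x,y]^2 \;\geq\; (1-\tau)^2+\tau^2[x,y]^2 \;\geq\; \tfrac{1}{2}\bigl((1-\tau)+\tau[x,y]\bigr)^2,
\]
the second step via $(a+b)^2\leq 2(a^2+b^2)$. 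Writing $h:=[x,y]$, this says $[\tau x,y]\gtrsim (1-\tau)+\tau h$.

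Next I would substitute this into the integral and change variables $u=1-\tau$:
\[
\int_0^1 \frac{(1-\tau)^b}{[\tau x,y]^{1+b+c}}\,d\tau \;\lesssim\; \int_0^1 \frac{u^b}{\bigl(h+u(1-h)\bigr)^{1+b+c}}\,du =: J.
\]
I then split on the size of $h$, noting $h\in(0,2)$. If $h\geq 1$, the denominator $h+u(1-h)$ is monotone between $h$ and $1$, hence $\geq 1$ for $u\in[0,1]$, which gives $J\leq 1/(b+1)\lesssim 1/h^c$ since $h<2$. If $h\leq 1$, then $1-h\geq 0$ and the elementary inequality
\[
h+u(1-h) \;\geq\; \max(h,u) \;\geq\; (h+u)/2
\]
applies, so
\[
J \;\leq\; 2^{1+b+c}\int_0^\infty \frac{u^b}{(h+u)^{1+b+c}}\,du \;=\; \frac{2^{1+b+c}}{h^c}\int_0^\infty \frac{v^b}{(1+v)^{1+b+c}}\,dv,
\]
after the scaling $u=hv$. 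The last integral is the Beta function $B(b+1,c)$, finite exactly because $b>-1$ (integrability at $0$) and $c>0$ (integrability at $\infty$). Combining the two cases yields $J\lesssim 1/h^c=1/[x,y]^c$.

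The main obstacle is spotting the decomposition of $[\tau x,y]^2$ that displays a non-negative cross term and separates the $(1-\tau)^2$ and $\tau^2[x,y]^2$ contributions; once this structural lower bound is in hand, the rest is bookkeeping with an elementary split into $h\leq 1$ and $h\geq 1$ (needed because $1-h$ changes sign) and one Euler-type substitution.
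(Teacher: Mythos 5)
Your proof is correct, and it is worth noting that the paper itself gives no argument for this lemma: it simply cites Lemma 4.2 of Ren \cite{R}. So what you have produced is a genuinely self-contained alternative. Your key identity
\[
[\tau x,y]^2=(1-\tau)^2+2\tau(1-\tau)(1-x\cdot y)+\tau^2[x,y]^2
\]
checks out by direct expansion of $[\tau x,y]^2=1-2\tau\,x\cdot y+\tau^2|x|^2|y|^2$, and since $x\cdot y\le |x||y|<1$ the cross term is nonnegative on $[0,1]$, giving $[\tau x,y]\gtrsim (1-\tau)+\tau[x,y]$. The subsequent bookkeeping is sound: in the case $h=[x,y]\ge 1$ the linear function $h+u(1-h)$ stays above $1$, and the bound $J\le 1/(b+1)\lesssim h^{-c}$ uses $h<2$, which the paper records ($[x,y]\le 1+|x||y|<2$); in the case $h\le 1$ the inequality $h+u(1-h)\ge\max(h,u)\ge (h+u)/2$ is valid, and the scaling $u=hv$ produces exactly the Beta integral $B(b+1,c)$, finite precisely under the hypotheses $b>-1$, $c>0$. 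All implicit constants depend only on $b$ and $c$, as required. Compared with the route through \cite{R} (Ren's argument is organized around pointwise lower bounds for $[\tau x,y]$ of a similar flavor), your decomposition has the advantage of making the nonnegativity of the cross term completely transparent and reducing the whole estimate to one Euler substitution; it would serve as a drop-in replacement making the paper self-contained at this point.
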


\section{Subharmonic Behaviour and an Integral Inequality}\label{Section-subharmonic}
If $u$ is harmonic on a domain $\Omega \subset \mathbb{R}^{n}$, then $|u|^{p}$ is subharmonic on $\Omega$ when $1\leq p<\infty$. This is no longer true when $0<p<1$, nevertheless it is shown in \cite{Fefferman} and \cite{Kuran} that $|u|^{p}$ has subharmonic behaviour in the following sense:
There exists a constant $K \, (>1)$ depending only on $n$ and $p$ such that
\begin{equation}\label{equsubharmonic}
 |u(x)|^{p}\leq \frac{K}{r^{n}}\int_{B(x,r)} |u(y)|^{p} d\nu(y),
\end{equation}
whenever $B(x,r)=\{y:|y-x|<r\}\subset \Omega$. This immediately  leads to the following pointwise estimate.
\begin{lemma}\label{growth}
Let $0<p<\infty$ and $\alpha>-1$. Then
\[
|u(x)| \lesssim \frac{\|u\|_{b^p_\alpha}}{(1-|x|^2)^{(n+\alpha)/p}}
\]
for all $u\in b^p_\alpha$ and $x\in \mathbb{B}$.
\end{lemma}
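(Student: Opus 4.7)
The plan is to apply the subharmonic behaviour inequality (\ref{equsubharmonic}) on a ball centered at $x$ whose radius is comparable to $1-|x|$, and then absorb the weight into the measure.

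First I would fix $x \in \mathbb{B}$ and set $r = (1-|x|)/2$, so that $B(x,r) \subset \mathbb{B}$. For $y \in B(x,r)$, the triangle inequality gives $|x| - r \leq |y| \leq |x| + r$, which translates to $(1-|x|)/2 \leq 1-|y| \leq 3(1-|x|)/2$. Since $1+|y|$ is trivially comparable to $1$, it follows that
\[
1-|y|^2 \sim 1-|x|^2 \sim 1-|x|
\]
uniformly for $y \in B(x,r)$. In particular $(1-|y|^2)^\alpha \sim (1-|x|^2)^\alpha$ regardless of the sign of $\alpha$.

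Next I would apply (\ref{equsubharmonic}) to $u$, which is harmonic on $\mathbb{B}$:
\[
|u(x)|^p \leq \frac{K}{r^n} \int_{B(x,r)} |u(y)|^p \, d\nu(y).
\]
Using the comparison above, I would rewrite this as
\[
|u(x)|^p \leq \frac{K}{r^n} \cdot (1-|x|^2)^{-\alpha} \int_{B(x,r)} |u(y)|^p (1-|y|^2)^\alpha \, d\nu(y),
\]
up to an absolute constant. Enlarging the region of integration to $\mathbb{B}$ and recalling that $d\nu_\alpha = V_\alpha^{-1}(1-|y|^2)^\alpha d\nu$, the integral is bounded by $V_\alpha \|u\|_{L^p_\alpha}^p$.

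Combining everything and using $r \sim 1-|x|^2$, I obtain
\[
|u(x)|^p \lesssim \frac{\|u\|_{b^p_\alpha}^p}{(1-|x|^2)^{n+\alpha}},
\]
and extracting $p$-th roots yields the stated estimate. There is no serious obstacle here: the only thing to be careful about is the two-sided comparison $1-|y|^2 \sim 1-|x|^2$ on $B(x,r)$, which makes the argument work uniformly for all real $\alpha > -1$.
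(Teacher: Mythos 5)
Your proof is correct and follows essentially the same route as the paper: take $r=(1-|x|)/2$, apply the subharmonic behaviour inequality \eqref{equsubharmonic}, and use the two-sided comparison $1-|y|^2\sim 1-|x|^2$ on $B(x,r)$ to insert the weight and enlarge the domain of integration. Your write-up just makes explicit the elementary inequalities $(1-|x|)/2\leq 1-|y|\leq 3(1-|x|)/2$ that the paper leaves implicit.
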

\begin{proof}
Pick $x\in \mathbb{B}$ and let $r=(1-|x|)/2$. Applying (\ref{equsubharmonic}) (with $K=1$ when $p\geq 1$) and noting that $(1-|y|^2)\sim(1-|x|^2)$ when $y \in B(x,r)$, we obtain
\begin{align*}
|u(x)|^{p}&\leq \frac{K}{r^{n}}\int_{B(x,r)} |u(y)|^{p} d\nu(y)\sim \frac{K}{r^{n+\alpha}}\int_{B(x,r)} |u(y)|^{p} (1-|y|^2)^{\alpha} d\nu(y)\\
&\lesssim \frac{\| u \|^{p}_{b^{p}_\alpha}}{(1-|x|^2)^{n+\alpha}}.
\end{align*}
\end{proof}

The above estimate leads to the following integral inequality (See \cite[Lemma 5.1]{R}).
\begin{lemma}\label{Lemma-Special1}
Let $0<p<1$ and $\alpha>-1$. Set $\rho=(n+\alpha)/p-n$. Then
\begin{equation*}
 \int_{\mathbb{B}} |u(x)| (1-|x|^2)^\rho d\nu(x)\lesssim \| u \|_{b^{p}_\alpha}
\end{equation*}
for all $u \in b^{p}_\alpha$.
\end{lemma}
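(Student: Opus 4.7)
The plan is to reduce the $L^1$--type integral to the $L^p$ (quasi)norm by pulling out a suitable power of $|u|$ pointwise, using the sub--mean-value growth estimate from Lemma \ref{growth}. The exponents are rigged so that the remaining factor is exactly the $b^p_\alpha$ integrand.

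First, since $0<p<1$, I would write the trivial factorization
\[
|u(x)|\,(1-|x|^2)^{\rho}=|u(x)|^{1-p}\,|u(x)|^{p}\,(1-|x|^2)^{\rho}.
\]
Then I apply Lemma \ref{growth} (which requires precisely $\alpha>-1$, the hypothesis at hand) to the factor $|u(x)|^{1-p}$, obtaining
\[
|u(x)|^{1-p}\lesssim \frac{\|u\|_{b^{p}_\alpha}^{\,1-p}}{(1-|x|^{2})^{(1-p)(n+\alpha)/p}}.
\]

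The key arithmetic check is that the exponent of $(1-|x|^2)$ that remains matches the $\alpha$--weight. Indeed, $\rho=(n+\alpha)/p-n$ gives
\[
\rho-\frac{(1-p)(n+\alpha)}{p}=\frac{n+\alpha}{p}-n-\frac{n+\alpha}{p}+(n+\alpha)=\alpha,
\]
so combining the two displays yields the pointwise bound
\[
|u(x)|\,(1-|x|^2)^{\rho}\lesssim \|u\|_{b^{p}_\alpha}^{\,1-p}\,|u(x)|^{p}\,(1-|x|^2)^{\alpha}.
\]
Integrating over $\mathbb{B}$ against $d\nu$ and recognising the right-hand side as $V_\alpha\|u\|_{b^{p}_\alpha}^{p}$ (up to the constant $V_\alpha$ absorbed in $\lesssim$), one gets
\[
\int_{\mathbb{B}}|u(x)|\,(1-|x|^2)^{\rho}\,d\nu(x)\lesssim \|u\|_{b^{p}_\alpha}^{\,1-p}\cdot\|u\|_{b^{p}_\alpha}^{\,p}=\|u\|_{b^{p}_\alpha},
\]
which is the claim.

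There is no real obstacle here; the only thing worth double-checking is the exponent arithmetic, which has to balance in exactly the right way so that the leftover weight on the right-hand side is $(1-|x|^2)^\alpha$. The split $|u|=|u|^{1-p}|u|^p$ combined with the boundary-growth estimate is the standard device for reducing $L^q$ information to $L^p$ information (for $q>p$) in Bergman-type spaces with $0<p<1$; once the pointwise estimate is in place the conclusion is immediate from the definition of $\|\cdot\|_{b^p_\alpha}$.
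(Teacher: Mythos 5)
Your proof is correct and is essentially identical to the paper's own argument: the paper also writes $|u|=|u|^{p}|u|^{1-p}$ and estimates the factor $|u|^{1-p}$ by the pointwise growth bound of Lemma \ref{growth}, so that the leftover weight is exactly $(1-|x|^2)^{\alpha}$. Your exponent check $\rho-(1-p)(n+\alpha)/p=\alpha$ is the only computation involved, and it is right.
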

\begin{proof}
Write $|u(x)|=|u(x)|^{p}|u(x)|^{1-p}$ and estimate the second factor by using
Lemma \ref{growth}.
\end{proof}

Later we will need a generalization of Lemma \ref{Lemma-Special1} to products of two harmonic functions. Before this we first show that product of two harmonic functions have subharmonic behaviour.

\begin{lemma}\label{twoLemma-Special}
Let $\Omega \subset \mathbb{R}^{n}$ be a domain, $0< p<\infty$ and $u,v$ be harmonic on $\Omega$.
There exists a constant $K$ such that
\begin{equation}\label{twoequsubharmonic}
 |u(x)v(x)|^{p}\leq \frac{K}{r^{n}}\int_{B(x,r)} |u(y)v(y)|^{p} d\nu(y),
\end{equation}
for every $B(x,r)\subset \Omega$. Here the constant $K$ depends only on $n$ and $p$ and is independent of $x,r,u,v$.
\end{lemma}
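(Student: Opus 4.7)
The lemma extends \eqref{equsubharmonic} from a single harmonic function to the product of two. My plan is to bootstrap \eqref{equsubharmonic} by applying it to $u$ and to $v$ separately on suitable sub-balls of $B(x,r)$ and then combining the estimates. Fix $B(x,r)\subset\Omega$. Applying \eqref{equsubharmonic} with exponent $p$ to $u$ on the half-ball $B(x,r/2)$ gives
\[
|u(x)|^{p}\le\frac{K\cdot 2^{n}}{r^{n}}\int_{B(x,r/2)}|u(w)|^{p}\,d\nu(w).
\]
Multiplying both sides by $|v(x)|^{p}$, the task becomes to push $|v(x)|^{p}$ under the integral so that the integrand becomes comparable to $|u(w)v(w)|^{p}$.

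For this, I would invoke an off-centre mean-value estimate for $v$. For each $w\in B(x,r/2)$ the point $x$ lies in the inner portion of $B(w,r/2)\subset B(x,r)\subset\Omega$, so the Poisson integral representation of $v$ on $B(w,r/2)$ combined with the standard dimensional upper bound on the Poisson kernel at interior points produces
\[
|v(x)|^{p}\lesssim \frac{1}{r^{n}}\int_{B(w,r/2)}|v(z)|^{p}\,d\nu(z),
\]
with a constant depending only on $n$ and $p$. Inserting this estimate into the previous display and exchanging the order of integration by Fubini, one arrives at a double integral over pairs $(w,z)$ with $w\in B(x,r/2)$ and $z\in B(w,r/2)\subset B(x,r)$; then a careful geometric argument exploiting the overlap of the integration domains, together with a further application of \eqref{equsubharmonic} to ``diagonalise'' the two variables, should reduce the right-hand side to the desired single integral of $|uv|^{p}$ over $B(x,r)$ with a constant depending only on $n$ and $p$.

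The main obstacle is precisely this off-centre mean-value estimate for $v$ together with the diagonalisation step: a naive combination of \eqref{equsubharmonic} applied to $u$ and to $v$ separately yields only the product $\bigl(\int|u|^{p}\bigr)\bigl(\int|v|^{p}\bigr)$, which by Cauchy--Schwarz bounds $\int|uv|^{p}$ from \emph{above} rather than below, the wrong direction. Overcoming this---producing a constant independent of $u$ and $v$---requires the Poisson-kernel structure specific to harmonic functions (to make the off-centre estimate uniform in $v$) and careful book-keeping of the overlapping integration domains, so that the two separate integrals coalesce into a single integral of $|uv|^{p}$.
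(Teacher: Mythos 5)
There is a genuine gap, and it sits exactly where you locate it: the ``diagonalisation'' step is not an obstacle you overcome but the entire content of the lemma, and your proposal does not supply it. Applying \eqref{equsubharmonic} to $u$ centred at $x$ and then (via an off-centre estimate) to $v$ centred at each $w$ unavoidably decouples the two evaluation points: after Fubini you are left with a quantity comparable to $r^{-2n}\bigl(\int_{B(x,r)}|u|^{p}\,d\nu\bigr)\bigl(\int_{B(x,r)}|v|^{p}\,d\nu\bigr)$, with $u$ and $v$ sampled at independent points. To finish you would need a reverse decoupling inequality of the form $r^{-n}\int_{B}|u|^{p}\,d\nu\cdot r^{-n}\int_{B}|v|^{p}\,d\nu\lesssim r^{-n}\int_{B}|uv|^{p}\,d\nu$ for harmonic $u,v$ with a constant depending only on $n,p$. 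That inequality is at least as strong as the lemma itself (combined with the pointwise bounds for each factor it immediately yields \eqref{twoequsubharmonic}), it is not a standard fact one can cite, and no argument for it is given --- the proposal only asserts that a ``careful geometric argument'' and a ``further application of \eqref{equsubharmonic}'' should produce it. No amount of bookkeeping of overlapping balls can recombine $|u(w)|^{p}|v(z)|^{p}$ into $|u(y)v(y)|^{p}$, because the failure is not geometric but functional: the product of averages genuinely can exceed the average of the product, and controlling that excess for harmonic functions is precisely what must be proved. (A secondary, fixable issue: for $0<p<1$ the Poisson representation does not directly give the off-centre bound for $|v|^{p}$; one must again invoke \eqref{equsubharmonic}.)

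The paper avoids the decoupling entirely by never separating $u$ from $v$. It invokes a theorem of Pavlovi\'c: if $f\in C^{1}(\Omega)$ satisfies the gradient bound $|\nabla f(x)|\leq (C/r)\sup\{|f(y)|:y\in B(x,r)\}$ for all $B(x,r)\subset\Omega$, then $|f|^{p}$ has the sub-mean-value property \eqref{twoequsubharmonic} with $K=K(C,n,p)$. One then checks this hypothesis for $f=uv$ using the product rule $\nabla(uv)=v\nabla u+u\nabla v$ together with the elementary estimate $|\nabla u(x)|\leq (n/r)\sup_{B(x,r)}|u|$ for harmonic $u$; the right-hand side is controlled by $\sup_{B(x,r)}|uv|$ because $u$ and $v$ are always evaluated at the same point. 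If you want to salvage your line of attack you would have to prove a reverse-H\"older or decoupling inequality for products of harmonic functions, which is a different (and harder) route; the Pavlovi\'c criterion is the efficient one here.
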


\begin{proof}
This follows from a result of M. Pavlovi\'c. In Theorem 2 of \cite{Pavlovic} it is shown that if $f \in C^{1}(\Omega)$ and there exists a constant $C$ such that
\begin{equation}\label{twopav}
 |\nabla f(x)|\leq \frac{C}{r}\sup\{|f(y)|: y \in B(x,r)\} \quad \text{whenever} \quad B(x,r)\subset \Omega,
\end{equation}
then there exists a constant $K$ depending only on $C$, $n$ and $p$ such that
\begin{equation*}
 |f(x)|^{p}\leq \frac{K}{r^{n}}\int_{B(x,r)} |f(y)|^{p} d\nu(y) \quad \text{for every} \quad B(x,r)\subset \Omega.
\end{equation*}
So, it remains only to show that (\ref{twopav}) holds with $f=uv$.

It is easily verified by differentiating Poisson integral that if $u$ is harmonic on $B(x,r)$, then
\begin{equation*}
 |\nabla u(x)|\leq \frac{n}{r}\sup\{|u(y)|: y \in B(x,r)\}.
\end{equation*}
Since
\begin{equation*}
 \nabla (uv)=v\nabla u+u\nabla v,
\end{equation*}
it follows that
\begin{equation*}
 |\nabla (uv)(x)|\leq \frac{2n}{r}\sup\{|u(y)v(y)|: y \in B(x,r)\}
\end{equation*}
and this finishes the proof.
\end{proof}

We now generalize Lemma \ref{Lemma-Special1} to product of two harmonic functions.

\begin{lemma}\label{Lemma-Special}
Let $0<p<1$ and $\alpha>-1$. Set $\rho=(n+\alpha)/p-n$. Then
\begin{equation*}
 \int_{\mathbb{B}} |u(x)v(x)| (1-|x|^2)^\rho d\nu(x)\lesssim \| uv \|_{L^{p}_\alpha},
\end{equation*}
for every  $u,v \in h(\mathbb{B})$.
\end{lemma}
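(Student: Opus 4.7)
The proof should parallel the proof of Lemma \ref{Lemma-Special1} almost exactly, with the single-function subharmonic-behaviour estimate (\ref{equsubharmonic}) replaced throughout by its product analogue, Lemma \ref{twoLemma-Special}, which has just been established.

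My plan is to first derive a product version of the pointwise growth estimate in Lemma \ref{growth}. Fix $x\in\mathbb{B}$ and set $r = (1-|x|)/2$, so that $B(x,r)\subset\mathbb{B}$ and $(1-|y|^2)\sim(1-|x|^2)$ for $y\in B(x,r)$. Apply Lemma \ref{twoLemma-Special} with this $r$ to $uv$ and insert the weight $(1-|y|^2)^\alpha$:
\[
|u(x)v(x)|^p \leq \frac{K}{r^n}\int_{B(x,r)}|u(y)v(y)|^p\,d\nu(y) \sim \frac{1}{r^{n+\alpha}}\int_{B(x,r)}|u(y)v(y)|^p (1-|y|^2)^\alpha\,d\nu(y) \lesssim \frac{\|uv\|_{L^p_\alpha}^p}{(1-|x|^2)^{n+\alpha}}.
\]
Taking $p$-th roots yields
\[
|u(x)v(x)| \lesssim \frac{\|uv\|_{L^p_\alpha}}{(1-|x|^2)^{(n+\alpha)/p}}, \qquad x\in\mathbb{B}.
\]

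Next I would split $|uv| = |uv|^p \cdot |uv|^{1-p}$ and estimate the second factor by the pointwise bound just obtained. Since $1-p>0$, this produces
\[
\int_{\mathbb{B}}|u(x)v(x)|(1-|x|^2)^\rho\,d\nu(x) \lesssim \|uv\|_{L^p_\alpha}^{1-p}\int_{\mathbb{B}}|u(x)v(x)|^p (1-|x|^2)^{\rho - (n+\alpha)(1-p)/p}\,d\nu(x).
\]
A quick check of the exponent using $\rho = (n+\alpha)/p - n$ gives
\[
\rho - \frac{(n+\alpha)(1-p)}{p} = \frac{n+\alpha}{p} - n - \frac{(n+\alpha)(1-p)}{p} = (n+\alpha) - n = \alpha,
\]
so the remaining integral is $V_\alpha\|uv\|_{L^p_\alpha}^p$, and combining the two factors gives $\|uv\|_{L^p_\alpha}$ as required.

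There is really no substantive obstacle: once Lemma \ref{twoLemma-Special} is in hand the argument is just a rerun of the proof of Lemma \ref{Lemma-Special1}, and the only slightly delicate point is checking that the exponents balance correctly, which they do precisely because $\rho$ was defined so that $p(\rho+n) = n+\alpha$.
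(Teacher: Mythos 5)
Your proof is correct and follows exactly the route the paper takes: replace the single-function subharmonicity estimate by Lemma \ref{twoLemma-Special} to get the product analogue of Lemma \ref{growth}, then split $|uv|=|uv|^{p}|uv|^{1-p}$ as in Lemma \ref{Lemma-Special1}. The exponent check you carry out explicitly is the only detail the paper leaves implicit, and it is right.
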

\begin{proof}
 Repeating the proof of Lemma \ref{growth} shows that the
pointwise estimate
\[
|u(x)v(x)| \lesssim \frac{\|uv\|_{L^p_\alpha}}{(1-|x|^2)^{(n+\alpha)/p}}
\]
holds by the subharmonic behaviour of $uv$. Then writing $|uv|=|uv|^{p}|uv|^{1-p}$ as in the proof of Lemma \ref{Lemma-Special1} and estimating the second factor we get the desired result.
\end{proof}

\section{Proof of Theorem \ref{Theorem-Equiv-Bloch}}\label{Section-Proof-of-T1}
In this section, we will prove Theorem \ref{Theorem-Equiv-Bloch}, first when $\alpha>-1$ and then we will deal with the general case $\alpha\in \mathbb{R}$.
As is mentioned before, the case $1\leq p<\infty$ and the  holomorphic analogues  of Theorem \ref{Theorem-Equiv-Bloch}
are already considered elsewhere. However the technical details are quite different so that we will not refer to other sources and give a complete proof to make this work self-contained .

For future reference we record the following simple lemma which is a special case of the reproducing formula (\ref{Reproducing}).
\begin{lemma}\label{Lemma-Reproducing-Special}
Let $0<p<1$, $\alpha>-1$, $\rho=(n+\alpha)/p-n$ and $s>\rho$. If $u \in b^{p}_\alpha$, then
\begin{equation*}
  u(x)=\int_{\mathbb{B}} R_s(x,y) u(y) d\nu_s(y) = \frac{1}{V_s}\int_{\mathbb{B}} R_s(x,y) u(y) (1-|y|^2)^s d\nu(y).
\end{equation*}
\end{lemma}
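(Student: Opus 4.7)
The plan is to reduce this to the standard reproducing formula for $b^1_s$, which, as stated in the excerpt just after Corollary \ref{Corollary-Push-Dst}, is already known to hold for all $u\in b^1_s$ with $s>-1$. What we need to check is that a function in $b^p_\alpha$ with $0<p<1$ actually lies in $b^1_s$ whenever $s>\rho$, so that the integral on the right is defined and the reproducing identity applies.

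First, I would verify that $s>-1$. Since $\alpha>-1$ and $0<p<1$, one has $(n+\alpha)/p>n+\alpha>n-1\geq 1$, so $\rho=(n+\alpha)/p-n>-1$, and consequently $s>\rho>-1$. This is the regime in which $\nu_s$ is a finite weighted volume measure and the classical $b^1_s$ reproducing formula is available.

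Next, the heart of the matter is to show $u\in L^1_s$. This is where Lemma \ref{Lemma-Special1} does almost all the work: it gives
\[
\int_{\mathbb{B}} |u(y)| (1-|y|^2)^{\rho}\, d\nu(y) \lesssim \|u\|_{b^p_\alpha}.
\]
Since $0<1-|y|^2\le 1$ on $\mathbb{B}$ and $s>\rho$, we have $(1-|y|^2)^{s}\le (1-|y|^2)^{\rho}$, hence
\[
\int_{\mathbb{B}} |u(y)| (1-|y|^2)^{s}\, d\nu(y) \lesssim \|u\|_{b^p_\alpha} < \infty,
\]
so $u\in L^1_s$. Because $u$ is harmonic, this puts $u$ in the weighted Bergman space $b^1_s$.

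The final step is just to quote the standard reproducing property: for $u\in b^1_s$ with $s>-1$,
\[
u(x)=\int_{\mathbb{B}} R_s(x,y)\, u(y)\, d\nu_s(y) = \frac{1}{V_s}\int_{\mathbb{B}} R_s(x,y)\, u(y)\, (1-|y|^2)^{s}\, d\nu(y),
\]
which is exactly the claim. There is essentially no obstacle here; the only substantive point is the embedding $b^p_\alpha\hookrightarrow b^1_s$ for $s>\rho$, and that is immediate from Lemma \ref{Lemma-Special1}. The role of the hypothesis $s>\rho$ is precisely to guarantee this embedding (and, via $\rho>-1$, to keep $s$ in the range where the Bergman reproducing formula is valid).
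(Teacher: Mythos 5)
Your proof is correct and follows essentially the same route as the paper: both apply Lemma \ref{Lemma-Special1} to get $u\in b^1_\rho\subseteq b^1_s$ (you simply spell out the weight monotonicity behind that inclusion and check $s>-1$) and then invoke the classical $b^1_s$ reproducing formula. The only quibble is that your displayed chain $(n+\alpha)/p>n+\alpha>n-1\geq 1$ by itself only yields $\rho>1-n$; the inequality you actually need, $\rho>\alpha>-1$, follows directly from the first comparison $(n+\alpha)/p>n+\alpha$.
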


\begin{proof}
It is well known that the reproducing formula (\ref{Reproducing-Bergman}) is  true when $u\in b^1_\alpha$. On the other side, applying Lemma \ref{Lemma-Special1} shows  $u\in b^1_\rho\subseteq b^1_s$ and the lemma follows.
\end{proof}

We begin the proof of Theorem \ref{Theorem-Equiv-Bloch} with Bergman spaces and first order derivatives and build up
from there. The following result is standard and can be proved by more elementary techniques. We include a proof for completeness and to illustrate how it follows from the reproducing formula, Lemma \ref{Lemma-Special} and the kernel estimates. Later, we will employ this technique many times.

\begin{lemma}\label{Lemma N=1}
Let $0<p<1$, $\alpha >-1$ and $u\in h(\mathbb{B})$. The following are equivalent:
\begin{enumerate}
  \item[(a)] $u\in b^{p}_\alpha$.
  \item[(b)] $(1-|x|^2) |\nabla u (x)| \in L^p_\alpha$.
  \item[(c)] $(1-|x|^2) \mathcal{R} u (x) \in L^p_\alpha$.
\end{enumerate}
Moreover,
\begin{equation}\label{Equivalent Norms when N=1}
  \| u - u(0) \|_{b^{p}_\alpha} \sim \| (1-|x|^2) \, |\nabla u(x)| \, \|_{L^p_\alpha} \sim \| (1-|x|^2) \mathcal{R} u(x) \|_{L^p_\alpha}.
\end{equation}
\end{lemma}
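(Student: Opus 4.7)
The plan is to prove the cyclic chain $(b)\Rightarrow(c)\Rightarrow(a)\Rightarrow(b)$ together with the corresponding norm inequalities. The easy step $(b)\Rightarrow(c)$ follows from the pointwise inequality $|\mathcal{R}u(x)|=|x\cdot\nabla u(x)|\leq|\nabla u(x)|$, which immediately gives $\|(1-|x|^2)\mathcal{R}u\|_{L^p_\alpha}\leq\|(1-|x|^2)|\nabla u|\|_{L^p_\alpha}$.

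For $(a)\Rightarrow(b)$, I would apply the reproducing formula from Lemma \ref{Lemma-Reproducing-Special} with any $s>\rho$, differentiate under the integral in the first variable, and use the kernel estimate of Lemma \ref{Lemma-Kernel-Estimate}. The obstruction at $0<p<1$ is that Schur's test is unavailable; its substitute is the subharmonic product inequality Lemma \ref{Lemma-Special}. Applied to the harmonic product $u(y)\cdot\partial_{x_i}R_s(x,y)$ (both factors are harmonic in $y$) with auxiliary parameter $\alpha':=p(n+s)-n>-1$ chosen so that $\rho'=s$ matches the weight of $d\nu_s$, it yields the pointwise bound
\[
|\partial_iu(x)|^p\;\lesssim\;\int_\mathbb{B}\frac{|u(y)|^p\,(1-|y|^2)^{p(n+s)-n}}{[x,y]^{p(n+s+1)}}\,d\nu(y).
\]
Multiplying by $(1-|x|^2)^{p+\alpha}$, applying Fubini, and invoking Lemma \ref{Integral-[x,y]} (whose positivity hypothesis reduces precisely to $s>\rho$) collapses the inner $x$-integral and gives $\|(1-|x|^2)|\nabla u|\|^p_{L^p_\alpha}\lesssim\|u\|^p_{L^p_\alpha}$.

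The direction $(c)\Rightarrow(a)$ is the most delicate. From $(1-|x|^2)\mathcal{R}u\in L^p_\alpha$ and $\alpha+p>-1$, the function $\mathcal{R}u$ lies in the ordinary weighted Bergman space $b^p_{\alpha+p}$. Starting from the identity $u(x)-u(0)=\int_0^1\mathcal{R}u(tx)/t\,dt$, I substitute the reproducing formula for $\mathcal{R}u$ with parameter $s'>\rho+1$, expand $R_{s'}(tx,z)=\sum_{k}\gamma_k(s')\,t^k\,Z_k(x,z)$, and use $\int_\mathbb{B}\mathcal{R}u\,d\nu_{s'}=0$ (since each $u_k$ with $k\geq1$ is $L^2(\mathbb{S})$-orthogonal to constants) to kill the $k=0$ term. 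Performing $\int_0^1 t^{k-1}dt=1/k$ produces the representation
\[
u(x)-u(0)=\int_\mathbb{B}\widetilde K_{s'}(x,z)\,\mathcal{R}u(z)\,d\nu_{s'}(z),\qquad \widetilde K_{s'}(x,z)=\sum_{k\geq1}\frac{\gamma_k(s')}{k}Z_k(x,z).
\]
Since $\gamma_k(s')/k\sim k^{s'}$ matches the asymptotic growth of $\gamma_k(s'-1)$, the proof of Lemma \ref{Lemma-Kernel-Estimate} carries over verbatim to yield $|\widetilde K_{s'}(x,z)|\lesssim[x,z]^{-(n+s'-1)}$.

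The rest mirrors $(a)\Rightarrow(b)$ in reverse: apply Lemma \ref{Lemma-Special} to the harmonic product $\widetilde K_{s'}(x,z)\cdot\mathcal{R}u(z)$ with $\alpha'=p(n+s')-n$, raise to the $p$-th power, multiply by $(1-|x|^2)^\alpha$, and collapse via Fubini and Lemma \ref{Integral-[x,y]} (whose positivity hypothesis reduces to $s'>\rho+1$). This yields $\|u-u(0)\|^p_{L^p_\alpha}\lesssim\|(1-|x|^2)\mathcal{R}u\|^p_{L^p_\alpha}$, and since the constant $u(0)$ lies in $L^p_\alpha$ for $\alpha>-1$, we conclude $u\in b^p_\alpha$. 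Chaining the three inequalities (using $\nabla u=\nabla(u-u(0))$ and $\mathcal{R}u=\mathcal{R}(u-u(0))$) delivers (\ref{Equivalent Norms when N=1}). The main obstacle throughout is the absence of Schur's test for $p<1$; harmonicity of the second factor in each reproducing-type integrand is essential, as it is precisely what licenses the use of Lemma \ref{Lemma-Special} to absorb the $p$-th power inside the integrand.
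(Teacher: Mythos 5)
Your proposal follows the same overall strategy as the paper: the cycle (b)$\Rightarrow$(c) by $|\mathcal{R}u|\leq|\nabla u|$, (a)$\Rightarrow$(b) by the reproducing formula of Lemma \ref{Lemma-Reproducing-Special} plus Lemma \ref{Lemma-Special} with the weight matched via $\alpha'=p(n+s)-n$ and then Fubini with Lemma \ref{Integral-[x,y]}, and (c)$\Rightarrow$(a) by integrating $\mathcal{R}u(\tau x)/\tau$, subtracting the mean-zero identity to kill the $k=0$ term, and estimating the resulting kernel. Your kernel $\widetilde K_{s'}(x,z)=\sum_{k\geq1}k^{-1}\gamma_k(s')Z_k(x,z)$ is exactly the paper's $G(x,y)=\int_0^1(R_{s}(\tau x,y)-1)\tau^{-1}\,d\tau$, and your parameter bookkeeping (the positivity conditions reducing to $s>\rho$ and $s'>\rho+1$) is correct.

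The one step where you diverge, and where your argument has a real gap in rigor, is the bound $|\widetilde K_{s'}(x,z)|\lesssim[x,z]^{-(n+s'-1)}$. You justify it by saying that $\gamma_k(s')/k\sim k^{s'}\sim\gamma_k(s'-1)$ and that therefore ``the proof of Lemma \ref{Lemma-Kernel-Estimate} carries over verbatim.'' That lemma is quoted here from the literature without proof, and, more importantly, a pointwise kernel estimate of this type is not a consequence of the mere order of magnitude of the coefficients: such estimates are obtained from finer structural information (e.g.\ control of iterated differences of the coefficient sequence, or a closed form), and two sequences with the same asymptotics need not produce kernels with the same pointwise bound. For the specific sequence $\gamma_k(s')/k$ the claim is true, but it requires an argument. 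The paper sidesteps this entirely: it first proves the pointwise bound $|R_{s}(\tau x,y)-1|/\tau\lesssim[\tau x,y]^{-(n+s)}$ (splitting $\tau\leq 1/2$, where the series $\sum_{k\geq1}\gamma_k(s)\tau^{k-1}Z_k(x,y)$ is uniformly bounded, from $\tau>1/2$, where Lemma \ref{Lemma-Kernel-Estimate} applies directly), and then integrates in $\tau$ using Lemma \ref{Lemma Estimate Integral wrt t} to get $|G(x,y)|\lesssim[x,y]^{-(n+s-1)}$. If you replace your coefficient-asymptotics appeal by this two-line integral estimate, your proof coincides with the paper's; everything else, including the final assembly of \eqref{Equivalent Norms when N=1} by applying the inequalities to $u-u(0)$, is sound.
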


\begin{proof}
(a) $\Rightarrow$ (b): Assume that $u\in b^{p}_\alpha$. Pick $s>(n+\alpha)/p-n$. By Lemma \ref{Lemma-Reproducing-Special},
\begin{equation*}
  u(x)-u(0)=\frac{1}{V_s} \int_{\mathbb{B}} R_s(x,y) \big(u(y)-u(0)\big) (1-|y|^2)^s d\nu(y).
\end{equation*}
Taking partial derivative and differentiating $R_s(x,y)$ in the first variable under the integral sign which is easily justified using (\ref{Rq-uniformly bounded}), we obtain
\begin{equation*}
  \frac{\partial u}{\partial x_i}(x)=\frac{1}{V_s} \int_{\mathbb{B}} \frac{\partial}{\partial x_i} R_s(x,y) \big(u(y)-u(0)\big) (1-|y|^2)^s d\nu(y).
\end{equation*}
We set $s=(n+\alpha')/p-n$. Note that $\alpha'>\alpha$. By Lemma \ref{Lemma-Special}
\begin{equation*}
  \left|\frac{\partial u}{\partial x_i}(x)\right|^{p} \lesssim  \int_{\mathbb{B}} \left|\frac{\partial}{\partial x_i} R_s(x,y)\right|^{p}\big|u(y)-u(0)\big|^{p} (1-|y|^2)^ {\alpha'} d\nu(y).
\end{equation*}
 Applying Lemma \ref{Lemma-Kernel-Estimate}, we get
\begin{equation*}
  (1-|x|^2)^{p} \left|\frac{\partial u}{\partial x_i}(x)\right|^{p} \lesssim (1-|x|^2)^{p} \int_{\mathbb{B}} \frac{1}{[x,y]^{p(n+s+1)}} \big|u(y)-u(0)\big|^{p} (1-|y|^2)^{\alpha'}  d\nu(y).
\end{equation*}
Integrating both sides against the measure $d\nu_{\alpha}$ and applying Fubini's theorem shows
\begin{equation*}
  \int_{\mathbb{B}}(1-|x|^2)^{p} \left|\frac{\partial u}{\partial x_i}(x)\right|^{p} d\nu_{\alpha}(x)\lesssim  \int_{\mathbb{B}}\big|u(y)-u(0)\big|^{p} \int_{\mathbb{B}} \frac{(1-|x|^2)^{p+\alpha}}{[x,y]^{p(n+s+1)}}  d\nu(x) d\nu_{ \alpha'}(y).
\end{equation*}
Estimating the inner integral above according to Lemma \ref{Integral-[x,y]}, we find that
\begin{equation*}
  \int_{\mathbb{B}}(1-|x|^2)^{p} \left|\frac{\partial u}{\partial x_i}(x)\right|^{p} d\nu_{\alpha}(x)\lesssim \int_{\mathbb{B}} \big|\big(u(y)-u(0)\big)\big|^{p}  d\nu_{\alpha}(y),
\end{equation*}
and this proves (a) implies (b).

(b) $\Rightarrow$ (c): This immediately follows from (\ref{Radial-Derivative}).

(c) $\Rightarrow$ (a): We first write $u$ in terms of $\mathcal{R}u $. This is done by simple calculus and (\ref{Radial-Derivative}):
\begin{equation}\label{simple calc}
  u(x)-u(0)=\int_{0}^{1} \frac{\mathcal{R}u(\tau x)}{\tau}d\tau.
\end{equation}

Pick $s$ such that $s>(n+\alpha+p)/p-n$. Notice that $s>0$, so $n+s-1>0$. We are given that $\mathcal{R} u \in b^{p}_{\alpha+p}$, so by Lemma \ref{Lemma-Reproducing-Special}
\begin{equation*}
  \mathcal{R}u(x)=\int_{\mathbb{B}} R_s(x,y) \mathcal{R} u(y) d\nu_s(y).
\end{equation*}
By setting $x=0$ and using (\ref{Radial-Derivative}) and (\ref{Rq(x,0)}), we obtain $0=\int_{\mathbb{B}}\mathcal{R} u(y) d\nu_s(y)$. Subtracting this
from the previous equation yields
\begin{equation*}
  \mathcal{R}u(x)=\int_{\mathbb{B}} \big(R_s(x,y)-1 \big) \mathcal{R} u(y) d\nu_s(y).
\end{equation*}
Using (\ref{simple calc}) and the Fubini's theorem, we obtain
\begin{align*}
 u(x)-u(0)&= \int_{0}^{1}\frac{1}{\tau}\int_{\mathbb{B}} \big(R_s(\tau x,y)-1 \big) \mathcal{R} u(y) d\nu_s(y) d\tau \\
          &= \int_{\mathbb{B}} \mathcal{R} u(y)\int_{0}^{1}\frac{R_s(\tau x,y)-1 }{\tau}d\tau d\nu_s(y).
\end{align*}

Let
\begin{equation*}
 G(x,y):=\int_{0}^{1}\frac{R_s(\tau x,y)-1 }{\tau}d\tau
\end{equation*}
which is harmonic in $y$. Set $s=(n+\alpha')/p-n$. Notice that $\alpha'>\alpha+p$.  By Lemma \ref{Lemma-Special}
\begin{equation}\label{est-G}
 |u(x)-u(0)|^{p}\lesssim \int_{\mathbb{B}} |\mathcal{R} u(y)|^{p}|G(x,y)|^{p}d\nu_ {\alpha'}(y).
\end{equation}
To estimate $|G(x,y)|$, we first show that
\begin{equation}\label{for-est-G}
 \frac{|R_s(\tau x,y)-1|}{\tau} \lesssim \frac{1}{[\tau x,y]^{n+s}}, \quad 0\leq\tau<1.
\end{equation}
If $1/2<\tau<1$, Lemma \ref{Lemma-Kernel-Estimate} implies
$|R_s(\tau x,y)-1|/\tau\leq 2|R_s(\tau x,y)-1|\lesssim 1/[\tau x,y]^{n+s}$. If $0\leq \tau \leq 1/2$, by (\ref{Rq - Series expansion1}) $\big(R_s(\tau x,y)-1 \big)/\tau =\sum_{k=1}^\infty \gamma_k(s)\tau^{k-1}  Z_k(x,y)$ and the series  uniformly converges in $x,y,\tau$ by Lemma \ref{Lemma-Zk} (c) and (\ref{gamma_k}) and hence is uniformly bounded. Thus (\ref{for-est-G}) holds and applying   Lemma \ref{Lemma Estimate Integral wrt t} shows
\begin{equation*}
 |G(x,y)|\leq \int_{0}^{1}\frac{|R_s(\tau x,y)-1| }{\tau}d\tau\lesssim \int_{0}^{1}\frac{d\tau}{[\tau x,y]^{n+s}}\lesssim \frac{1}{[ x,y]^{n+s-1}}.
\end{equation*}
Inserting this into (\ref{est-G}) we obtain
\begin{equation*}
 |u(x)-u(0)|^{p}\lesssim \int_{\mathbb{B}} \frac{|\mathcal{R} u(y)|^{p}}{[x,y]^{p(n+s-1)}}d\nu_ {\alpha'}(y).
\end{equation*}
Finally, Fubini's theorem and Lemma \ref{Integral-[x,y]} yields

\begin{align*}
 \int_{\mathbb{B}}  |u(x)-u(0)|^{p}d\nu_\alpha(x)&\lesssim \int_{\mathbb{B}}|\mathcal{R} u(y)|^{p}\int_{\mathbb{B}}\frac{ d\nu_ {\alpha}(y)}{[x,y]^{p(n+s-1)}}d\nu_ {\alpha'}(y)\\
 &\lesssim \int_{\mathbb{B}} \left( (1-|y|^{2})\big|\mathcal{R} u(y)\big|\right)^{p} d\nu_ {\alpha}(y).
\end{align*}
This completes the proof.
\end{proof}

We note that (\ref{Equivalent Norms when N=1}) also  can  be written  in the following form:
\begin{equation*}
   \| u \|_{b^{p}_\alpha} \sim |u(0)|+ \| (1-|x|^2) \, |\nabla u(x)| \, \|_{L^p_\alpha} \sim |u(0)| + \| (1-|x|^2) \mathcal{R} u(x) \|_{L^p_\alpha}.
\end{equation*}

It is straightforward to extend the previous lemma to higher order derivatives.

\begin{lemma}\label{Lemma N bigger than 1}
Let $0<p<1$ and $\alpha>-1$. Then the following conditions are equivalent for $u\in h(\mathbb{B})$:
\begin{enumerate}
\item[(a)] $u\in b^{p}_\alpha$.
\item[(b)] For every $N\in \mathbb{N}$, we have $(1-|x|^2)^N \partial^m u \in L^p_\alpha$ for every multi-index $m$ with $|m|=N$.
\item[(c)] There exists $N\in\mathbb{N}$ such that $(1-|x|^2)^N \partial^m u \in L^p_\alpha$ for every multi-index $m$ with $|m|=N$.
\item[(d)] For every $N\in \mathbb{N}$, we have $(1-|x|^2)^N \mathcal{R}^N u \in L^p_\alpha$.
\item[(e)] There exists $N\in\mathbb{N}$ such that $(1-|x|^2)^N \mathcal{R}^{N} u \in L^p_\alpha$.
\end{enumerate}
Moreover,
\begin{equation}\label{Equivalent-norms-N-bigger-than1}
\begin{split}
\| u \|_{b^{p}_\alpha} & \sim \sum_{|m|\leq N-1} |(\partial^m u) (0)| + \sum_{|m|=N} \| (1-|x|^2)^N \partial^m u(x) \|_{L^p_\alpha} \\
                     & \sim |u(0)| + \|(1-|x|^2)^N \mathcal{R}^N u(x) \|_{L^p_\alpha}.
\end{split}
\end{equation}
\end{lemma}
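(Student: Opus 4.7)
The plan is to induct on $N$, with Lemma \ref{Lemma N=1} serving as both the base case ($N=1$) and the engine of the inductive step. The case $N=0$ is trivial (all conditions just say $u \in b^p_\alpha$), and for $N \geq 2$ I would assume the full lemma for $N-1$ with every parameter $\alpha > -1$.

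The crucial reformulation I would exploit is the weight-shift identity: for a harmonic function $g$, the statement $(1-|x|^2)^k g \in L^p_\alpha$ is literally the same as $g \in b^p_{\alpha+pk}$ (and $\alpha+pk > -1$ since $\alpha > -1$, so this is a genuine weighted Bergman space). This lets me trade a factor of $(1-|x|^2)$ for raising the weight by $p$, and it is what allows Lemma \ref{Lemma N=1} to be fed into itself recursively. For (a) $\Leftrightarrow$ (b) $\Leftrightarrow$ (c) at level $N$: Lemma \ref{Lemma N=1} applied to $u$ with weight $\alpha$ says $u \in b^p_\alpha$ iff $\partial_i u \in b^p_{\alpha+p}$ for each $i$; then the inductive hypothesis applied to each $\partial_i u$ with weight $\alpha+p$ and order $N-1$ converts this into conditions on $\partial^{m'}\partial_i u = \partial^{m'+e_i} u$ for $|m'|=N-1$. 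The weight-shift gives $(1-|x|^2)^{N-1}\partial^{m'+e_i} u \in L^p_{\alpha+p} \Leftrightarrow (1-|x|^2)^N \partial^{m'+e_i} u \in L^p_\alpha$, and as $i$ and $m'$ vary, $m=m'+e_i$ exhausts all multi-indices of length $N$. The (b) $\Rightarrow$ (c) direction is trivial, and (c) $\Rightarrow$ (a) just runs the same chain of equivalences backwards.

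The equivalence (a) $\Leftrightarrow$ (d) $\Leftrightarrow$ (e) is entirely parallel with $\partial_i u$ replaced by $\mathcal{R} u$: Lemma \ref{Lemma N=1} gives $u \in b^p_\alpha \Leftrightarrow \mathcal{R} u \in b^p_{\alpha+p}$, and the inductive hypothesis applied to $\mathcal{R} u$ provides the radial statement at level $N$ after the same weight-shift.

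The norm equivalence (\ref{Equivalent-norms-N-bigger-than1}) is obtained by chaining the norm equivalences at each step. On the partial-derivative side, the terms $|(\partial^m u)(0)|$ for $1 \leq |m| \leq N-1$ appear as the ``$|g(0)|$'' terms when the inductive hypothesis is applied to $g = \partial_i u$, while $|u(0)|$ comes from the outermost application of Lemma \ref{Lemma N=1} directly to $u$. On the radial side, only $|u(0)|$ survives because $\mathcal{R}^k u (0) = \sum_{j \geq 1} j^k u_j(0) = 0$ for $k \geq 1$, since each $u_j$ with $j \geq 1$ vanishes at the origin. I expect the main obstacle to be purely bookkeeping — verifying that the induction hypothesis applies (which comes down to $\alpha + p > -1$, automatic from $\alpha > -1$) and that the finite combinatorics of multi-indices is organized carefully; once the weight-shift trick is recognized, the proof reduces cleanly to a recursive application of Lemma \ref{Lemma N=1}.
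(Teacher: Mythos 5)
Your proposal is correct and follows essentially the same route as the paper: the paper also obtains the equivalences by iterating Lemma \ref{Lemma N=1} one derivative at a time, using exactly the weight-shift observation that $(1-|x|^2)^k g\in L^p_\alpha$ for harmonic $g$ means $g\in b^p_{\alpha+pk}$, and treats the radial-derivative chain in parallel. Your inductive phrasing and the remark that $\mathcal{R}^k u(0)=0$ for $k\ge 1$ (explaining why only $|u(0)|$ survives on the radial side) merely make explicit the bookkeeping that the paper omits.
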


\begin{proof}
We will show that (a) $\Leftrightarrow$ (b) $\Leftrightarrow$ (c). The equivalence (a) $\Leftrightarrow$ (d) $\Leftrightarrow$ (e) can be justified similarly.
First, there is nothing to prove for (b) $\Rightarrow$ (c).

(a) $\Rightarrow$ (b): Assume that $u\in b^{p}_\alpha$. By Lemma \ref{Lemma N=1}, $\dfrac{\partial u}{\partial x_i} \in b^{p}_{\alpha+p}$ for e\-very $i=1,2,\ldots,n$. If we apply Lemma \ref{Lemma N=1} again we obtain $\dfrac{\partial^2 u}{\partial x_j \partial x_i} \in b^{p}_{\alpha+2p}$ for e\-very $i,j=1,2,\ldots,n$. We continue so on until we obtain $\partial^m u \in b^{p}_{\alpha+pN}$ for every $m$ with $|m|=N$.

(c) $\Rightarrow$ (a): Assume that $(1-|x|^2)^N \partial^m u \in L^p_\alpha$, that is $\partial^m u \in b^{p}_{\alpha+pN}$ for every multi-index $m$ with $|m|=N$. Let $m'$ be a multi-index with $|m'|=N-1$. Then $\dfrac{\partial}{\partial x_i} \partial^{m'}u\in b^{p}_{\alpha+pN}$ for every $i=1,2,\ldots,n$ and Lemma \ref{Lemma N=1} implies that $\partial^{m'} u \in b^{p}_{\alpha+p(N-1)}$. We repeat the same argument sufficiently many times until  $u\in b^{p}_\alpha$ is obtained.

It is easy to verify (\ref{Equivalent-norms-N-bigger-than1}) and  we omit the details.
\end{proof}

We now prove counterpart of Lemma \ref{Lemma N=1} for the operators $D^t_s$. We continue to stay in the region $\alpha>-1$.

\begin{lemma}\label{Lemma Dst}
Let $0<p<1$ and $\alpha>-1$. Then the following conditions are equivalent for $u\in h(\mathbb{B})$:
\begin{enumerate}
\item[(a)] $u \in b^{p}_\alpha$.
\item[(b)] For every $s,t\in \mathbb{R}$ with $\alpha+pt>-1$, we have $(1-|x|^2)^t D^t_s u \in L^p_\alpha$.
\item[(c)] There exist $s,t\in \mathbb{R}$ with $\alpha+pt>-1$ such that $(1-|x|^2)^t D^t_s u \in L^p_\alpha$.
\end{enumerate}
Moreover, $\|u\|_{b_\alpha} \sim \|(1-|x|^2)^t D^t_s u\|_{L^p_\alpha}$.
\end{lemma}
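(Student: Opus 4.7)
The plan is to follow the same three-step recipe used in Lemma \ref{Lemma N=1}, with $D^t_s$ replacing the first-order partial derivative: reproduce $u$ with a high-parameter kernel, push $D^t_s$ through the integral, estimate pointwise via the subharmonic inequality combined with the kernel bound, and finish with Fubini and the standard $[x,y]$-integral. Since (b) $\Rightarrow$ (c) is trivial, the real work is to prove (a) $\Rightarrow$ (b); the converse (c) $\Rightarrow$ (a) will then come essentially for free from the invertibility identity (\ref{inverse of Dst}), which lets me bootstrap the forward direction applied to $D^t_s u$.

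For (a) $\Rightarrow$ (b), fix $s,t$ with $\alpha+pt>-1$ and pick $s'$ large enough that $s'>\rho:=(n+\alpha)/p-n$ and $s'+t>-n$; set $\alpha':=p(n+s')-n$, which then automatically satisfies $\alpha'>\alpha$. The reproducing formula of Lemma \ref{Lemma-Reproducing-Special} applies (since $s'>\rho$ and $b^p_\alpha\subset b^1_{s'}$ by Lemma \ref{Lemma-Special1}), and Lemma \ref{Lemma-Push-Dst} pushes $D^t_s$ inside to give
\begin{equation*}
D^t_s u(x) = \frac{1}{V_{s'}} \int_{\mathbb B} D^t_s R_{s'}(x,y)\,u(y)\,(1-|y|^2)^{s'}\,d\nu(y).
\end{equation*}
Both $D^t_s R_{s'}(x,\cdot)$ and $u$ are harmonic in $y$, and the weight exponent $s'$ equals $(n+\alpha')/p-n$, so Lemma \ref{Lemma-Special} (raised to the $p$-th power) combined with the kernel bound $|D^t_s R_{s'}(x,y)|\lesssim 1/[x,y]^{n+s'+t}$ from Lemma \ref{Lemma-Estimate-Dst-Kernel} yields
\begin{equation*}
|D^t_s u(x)|^p \lesssim \int_{\mathbb B} \frac{|u(y)|^p\,(1-|y|^2)^{\alpha'}}{[x,y]^{p(n+s'+t)}}\,d\nu(y).
\end{equation*}
Multiplying by $(1-|x|^2)^{\alpha+pt}$, integrating in $x$, and applying Fubini, the inner $x$-integral is controlled by Lemma \ref{Integral-[x,y]} with $b=\alpha+pt>-1$ and positive excess $\alpha'-\alpha$, producing exactly the compensating factor $(1-|y|^2)^{-(\alpha'-\alpha)}$ that cancels against the extra $(1-|y|^2)^{\alpha'}$ in $y$ and leaves $\|u\|_{b^p_\alpha}^p$.

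For (c) $\Rightarrow$ (a), suppose $(1-|x|^2)^t D^t_s u\in L^p_\alpha$, so that $v:=D^t_s u$ lies in the genuine weighted Bergman space $b^p_{\alpha+pt}$ (recall $\alpha+pt>-1$). Apply the already-proved (a) $\Rightarrow$ (b) to $v$, but with parameter pair $(s+t,-t)$ in place of $(s,t)$: the required hypothesis $(\alpha+pt)+p(-t)>-1$ reduces to the standing assumption $\alpha>-1$. By the inverse identity (\ref{inverse of Dst}) we have $D^{-t}_{s+t}v=u$, so this produces $(1-|x|^2)^{-t}u\in L^p_{\alpha+pt}$; combining the weights gives exactly $u\in b^p_\alpha$ together with the bound $\|u\|_{b^p_\alpha}\lesssim\|(1-|x|^2)^t D^t_s u\|_{L^p_\alpha}$. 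The two quasi-norms are therefore comparable.

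The only genuinely technical step is the (a) $\Rightarrow$ (b) estimate, and its main subtlety is the simultaneous choice of $s'$: it must satisfy $s'>\rho$ (for the reproducing formula), $s'+t>-n$ (so the kernel bound avoids a logarithm or constant), and $\alpha':=p(n+s')-n>\alpha$ (so Lemma \ref{Integral-[x,y]} returns a genuine negative power of $1-|y|^2$ rather than a constant). All three conditions are compatible and met simultaneously by taking $s'$ sufficiently large, after which the rest is standard bookkeeping. The invertibility bootstrap in (c) $\Rightarrow$ (a) is what allows the parameter $t$ to be taken arbitrary (positive, negative, or zero) without separately revisiting the kernel estimates for each sign.
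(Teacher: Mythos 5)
Your proposal is correct and follows essentially the same route as the paper's own proof: reproduce $u$ via Lemma \ref{Lemma-Reproducing-Special} with a large parameter, push $D^t_s$ inside with Lemma \ref{Lemma-Push-Dst}, apply Lemma \ref{Lemma-Special} together with Lemmas \ref{Lemma-Estimate-Dst-Kernel} and \ref{Integral-[x,y]}, and obtain (c) $\Rightarrow$ (a) by bootstrapping (a) $\Rightarrow$ (b) through the inverse identity (\ref{inverse of Dst}). The only cosmetic differences are the name of the auxiliary parameter ($s'$ versus the paper's $c$) and your explicit imposition of $s'+t>-n$, which the paper instead observes is automatic from $s'>\rho$ and $\alpha+pt>-1$.
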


\begin{proof}
With (b) $\Rightarrow$ (c) being clear, it is enough to show that (a) $\Rightarrow$ (b) and (c) $\Rightarrow$ (a).

(a) $\Rightarrow$ (b): Suppose $u\in b^{p}_\alpha$. Pick $c>(n+\alpha)/p-n$. Then by Lemma \ref{Lemma-Reproducing-Special},
\begin{equation*}
  u(x)=\int_{\mathbb{B}} R_c(x,y) u(y) d\nu_c(y).
\end{equation*}
If we apply $D^t_s$ to both sides and push it into the integral using Lemma \ref{Lemma-Push-Dst}, we obtain
 \begin{equation*}
  (1-|x|^2)^t |D^t_s u(x)| \lesssim (1-|x|^2)^t \int_{\mathbb{B}} |D^t_s R_c(x,y)| |u(y)| (1-|y|^2)^c d\nu(y).
\end{equation*}
We set $c=(n+\alpha')/p-n$. Note that  $\alpha'>\alpha$. Lemma \ref{Lemma-Special} shows
\begin{equation*}
(1-|x|^2)^{pt} |D^t_s u(x)|^{p}   \lesssim (1-|x|^2)^{pt} \int_{\mathbb{B}} \left|D^t_s R_c(x,y)\right|^{p}\big|u(y)\big|^{p} (1-|y|^2)^ {\alpha'} d\nu(y).
\end{equation*}
Applying Lemma \ref{Lemma-Estimate-Dst-Kernel} (with $n+c+t>(n+\alpha)/p+t>(n-1)/p>0$), we get
\begin{equation*}
  (1-|x|^2)^{pt}|D^t_s u(x)|^{p} \lesssim (1-|x|^2)^{pt} \int_{\mathbb{B}}\frac{1}{[x,y]^{p(n+c+t)}} \big|u(y)\big|^{p} (1-|y|^2)^ {\alpha'} d\nu(y).
\end{equation*}

An application of  Fubini's theorem in combination with Lemma \ref{Integral-[x,y]} shows that
\begin{equation*}
  \int_{\mathbb{B}} (1-|x|^2)^{pt} |D^t_s u(x)|^{p} d\nu_{\alpha}(x)\lesssim \int_{\mathbb{B}} |u(y)|^{p} d\nu_{\alpha}(y),
\end{equation*}
and part (b) follows.

(c) $\Rightarrow$ (a): The condition $(1-|x|^2)^t D^t_s u \in L^p_\alpha$ implies $D^t_s u\in L^{p}_{\alpha+pt}$ and since $D^t_s u$ is harmonic and $\alpha+pt>-1$, $D^t_s u\in b^{p}_{\alpha+pt}$. The part (a) $\Rightarrow$ (b) shows
 $(1-|x|^{2})^{-t} D^{-t}_{s+t}D^t_s u \in L^{p}_{\alpha+pt}$. By (\ref{inverse of Dst}), $(1-|x|^{2})^{-t}  u \in L^{p}_{\alpha+pt}$ and hence
 $u \in L^{p}_{\alpha}$. As $u$ is harmonic and $\alpha>-1$ we conclude that $u\in b^{p}_{\alpha}$.
\end{proof}

Before removing the restriction $\alpha>-1$ and extending the previous lemmas to all $\alpha\in \mathbb{R}$ we mention the following two lemmas. These are elementary but we include proofs for completeness.

\begin{lemma}\label{Radial-in-terms-of-partials}
Let $N\geq 1$ be an integer. Then
\begin{equation*}
\mathcal{R}^N = \sum_{1\leq |m|\leq N} p_m \partial^m,
\end{equation*}
where $p_m$ is a polynomial with degree equal to $|m|$.
\end{lemma}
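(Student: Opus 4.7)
The plan is to prove this by induction on $N$, keeping track of the homogeneity of the polynomial coefficients. The stronger claim I will establish is that $\mathcal{R}^N = \sum_{1 \leq |m| \leq N} p_m \partial^m$ where each $p_m$ is a \emph{homogeneous} polynomial of degree exactly $|m|$; this immediately implies the lemma.

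For the base case $N=1$, the definition (\ref{Radial-Derivative}) gives $\mathcal{R} = \sum_{i=1}^n x_i \partial_i$, so $p_{e_i}(x) = x_i$ is homogeneous of degree $1 = |e_i|$. For the inductive step, assume the claim holds for some $N \geq 1$. Applying $\mathcal{R}$ and using the product rule yields
\begin{equation*}
\mathcal{R}^{N+1} = \sum_{i=1}^n x_i \partial_i \Bigl( \sum_{1 \leq |m| \leq N} p_m \partial^m \Bigr) = \sum_{1 \leq |m| \leq N} \Bigl( \sum_{i=1}^n x_i (\partial_i p_m) \Bigr) \partial^m + \sum_{1 \leq |m| \leq N} \sum_{i=1}^n x_i \, p_m \, \partial^{m+e_i}.
\end{equation*}
The key observation is Euler's relation: since $p_m$ is homogeneous of degree $|m|$, we have $\sum_i x_i (\partial_i p_m) = \mathcal{R} p_m = |m| p_m$, which is again homogeneous of degree $|m|$. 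This handles the first sum. For the second sum, setting $m' = m + e_i$ (so $|m'| = |m|+1$), the coefficient $x_i p_m$ is homogeneous of degree $|m|+1 = |m'|$, so after grouping terms of equal multi-index we obtain coefficients $q_{m'} = \sum_{i : m'_i \geq 1} x_i p_{m'-e_i}$ that are homogeneous of degree $|m'|$ (or identically zero, in which case there is nothing to do). Combining both sums gives $\mathcal{R}^{N+1} = \sum_{1 \leq |m| \leq N+1} \widetilde{p}_m \partial^m$ with $\widetilde{p}_m$ homogeneous of degree $|m|$, completing the induction.

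There is no serious obstacle; the only point requiring care is the bookkeeping of multi-indices when regrouping the second sum, and the use of Euler's homogeneity relation to ensure the first sum does not degrade the homogeneity statement. The inductive hypothesis in the homogeneous form is essential, since merely assuming $\deg p_m \leq |m|$ would not suffice to control $\sum_i x_i (\partial_i p_m)$ as cleanly.
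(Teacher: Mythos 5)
Your proof is correct and follows the same route as the paper, which verifies $N=1$ and $N=2$ by direct computation and then invokes induction without writing out the general step. Your strengthened inductive hypothesis (homogeneity of each $p_m$) together with Euler's relation is exactly the bookkeeping needed to make that induction explicit, so this is a more detailed version of the paper's argument rather than a different one.
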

%\dfrac{\partial}{\partial x_i}f$
\begin{proof}
Let $u$ be a smooth function. For $N=1$, $\mathcal{R} u(x)= x\cdot \nabla u(x)=\sum_{i=1}^n x_i  \partial u/\partial x_i$, so the lemma is true in this case. For $N=2$ we compute
\begin{equation*}
 \mathcal{R}^2 u(x) = \sum_{j=1}^n x_j \frac{\partial}{\partial x_j} \left(\sum_{i=1}^n x_i \frac{\partial u}{\partial x_i} \right) = \sum_{i,j=1}^n x_i x_j \frac{\partial^2 u}{\partial x_j \partial x_i} + \sum_{j=1}^n x_j \frac{\partial u}{\partial x_j}
\end{equation*}
and the lemma is also true for $N=2$. The general case follows from induction.
\end{proof}

\begin{lemma}\label{Radial-in-terms-of-partials low}
Let $\alpha>-1$, $0<p<1$ and $u\in h(\mathbb{B})$. If $\partial^m u \in b^{p}_{\alpha}$ for every multi-index $m$ with $|m|=N$, then $\partial^{m'} u\in b^{p}_{\alpha}$ for all multi-indices $m'$ with $|m'|<N$.
\end{lemma}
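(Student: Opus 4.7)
The plan is to argue by downward induction on the differentiation order. It suffices to prove the following inductive step: if $\partial^m u\in b^{p}_{\alpha}$ for every multi-index $m$ with $|m|=k$, where $k\geq 1$, then $\partial^{m'}u\in b^{p}_{\alpha}$ for every multi-index $m'$ with $|m'|=k-1$. Iterating this from $k=N$ down through $k=1$ then yields $\partial^{m'}u\in b^{p}_{\alpha}$ for every $|m'|\leq N-1$, which is the desired conclusion.

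To establish the inductive step, I would fix $m'$ with $|m'|=k-1$ and set $v=\partial^{m'}u$, which is harmonic on $\mathbb{B}$. For each $i=1,\ldots,n$, the first partial $\partial v/\partial x_i=\partial^{m'+e_i}u$ is a derivative of $u$ of order $k$, hence by hypothesis lies in $b^{p}_{\alpha}$. Because $\alpha>-1$, the definition of the weighted harmonic Bergman space gives $b^{p}_{\alpha}\subset L^{p}_{\alpha}$, so each $\partial v/\partial x_i$ is in $L^{p}_{\alpha}$, and therefore $|\nabla v|\in L^{p}_{\alpha}$. Since $(1-|x|^2)\leq 1$ on $\mathbb{B}$, the pointwise bound $(1-|x|^2)|\nabla v(x)|\leq |\nabla v(x)|$ forces $(1-|x|^2)|\nabla v|\in L^{p}_{\alpha}$ as well. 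Applying the equivalence (a) $\Leftrightarrow$ (b) of Lemma \ref{Lemma N=1} to the harmonic function $v$ then delivers $v=\partial^{m'}u\in b^{p}_{\alpha}$, completing the induction.

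There is no substantive obstacle here: the argument reduces at once to the first-order characterization already proved in Lemma \ref{Lemma N=1}, combined with the trivial bound $(1-|x|^2)\leq 1$. The latter lets us upgrade the (stronger) information that the first partials of $v$ lie in the unweighted $L^{p}_{\alpha}$ to the weighted condition $(1-|x|^2)|\nabla v|\in L^{p}_{\alpha}$ required by the lemma. The essential content of the result has therefore already been absorbed into Lemma \ref{Lemma N=1}; the present lemma simply propagates it across orders of differentiation.
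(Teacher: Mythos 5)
Your proof is correct, and it takes a genuinely different (and somewhat leaner) route than the paper's. The paper applies Lemma \ref{Lemma N bigger than 1} twice: first to each $\partial^m u\in b^p_\alpha$ with $|m|=N$ to get $\partial^{m'}\partial^m u\in b^p_{\alpha+p|m'|}$, then the inclusion (\ref{Inclusion}) to place $\partial^m\partial^{m'}u$ in $b^p_{\alpha+pN}$ for every $|m|=N$, and finally the implication (c) $\Rightarrow$ (a) of that lemma applied to $\partial^{m'}u$ at order $N$. You instead exploit the fact that the hypothesis is stronger than it needs to be: since $\alpha>-1$, membership $\partial^{m'+e_i}u\in b^p_\alpha$ means the first partials of $v=\partial^{m'}u$ lie in the \emph{unweighted} $L^p_\alpha$, so the weighted condition $(1-|x|^2)|\nabla v|\in L^p_\alpha$ follows from the trivial bound $(1-|x|^2)\leq 1$ (together with $|\nabla v|^p\leq\sum_i|\partial_i v|^p$ for $p\leq 1$), and Lemma \ref{Lemma N=1} (b) $\Rightarrow$ (a) finishes the inductive step; a downward induction then covers all $|m'|<N$. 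Your argument only needs the first-order characterization, whereas the paper's stays entirely within the bookkeeping of the weights $\alpha+pk$ via Lemma \ref{Lemma N bigger than 1} and (\ref{Inclusion}); both are legitimate, and the conclusions agree.
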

\begin{proof}
 Suppose that $\partial^m u \in b^{p}_{\alpha}$ for every multi-index $m$ with $|m|=N$ and $|m'|<N$. Then by Lemma \ref{Lemma N bigger than 1}, $\partial^{m'}\partial^m u \in b^{p}_{\alpha+p|m'|}$ and by (\ref{Inclusion}), $\partial^m \partial^{m'} u = \partial^{m'} \partial^m u \in b^{p}_{\alpha+pN}$. Since this is true for every $|m|=N$, we conclude by Lemma \ref{Lemma N bigger than 1} that $\partial^{m'} u\in b^{p}_{\alpha}$.
\end{proof}

With all the preparations above, now we can give the proof of Theorem \ref{Theorem-Equiv-Bloch} about the interchangeability of various
kinds of derivatives in defining harmonic Besov spaces $b^{p}_{\alpha}$, $0<p<1$.

\begin{proof}[Proof of Theorem \ref{Theorem-Equiv-Bloch}]
Since the implications (a) $\Rightarrow$ (b), (c) $\Rightarrow$ (d) and (e) $\Rightarrow$ (f) are clear,
we only need to prove implications (b) $\Rightarrow$ (c), (d) $\Rightarrow$ (e) and (f) $\Rightarrow$ (a).
 We will refer many times to Lemmas \ref{Lemma N bigger than 1} and \ref{Lemma Dst} and will make sure that the subscript of $b^{p}$ is always greater than $-1$ in these cases.

(b) $\Rightarrow$ (c): Assume there exists $N_0$ with $\alpha+pN_0>-1$ such that $(1-|x|^2)^{N_0} \partial^m u \in L^p_\alpha$, that is $\partial^m u \in b^{p}_{\alpha+pN_0}$ for every multi-index $m$ with $|m|=N_0$. Then by Lemma \ref{Radial-in-terms-of-partials low}, $\partial^{m'} u\in b^{p}_{\alpha+pN_0}$ for all $m'$ with $|m'|<N_0$. Applying Lemma \ref{Radial-in-terms-of-partials} shows that
\begin{equation}\label{Radial N0}
\mathcal{R}^{N_0} u \in b_{\alpha+pN_0}.
\end{equation}

Now take any $N\in\mathbb{N}$ such that $\alpha+pN>-1$. If $N>N_0$, Lemma \ref{Lemma N bigger than 1} and (\ref{Radial N0}) implies $\mathcal{R}^N u = \mathcal{R}^{N-N_0} (\mathcal{R}^{N_0} u) \in b^{p}_{\alpha+pN_0+p(N-N_0)}=b_{\alpha+pN}$. If $N<N_0$, writing $\mathcal{R}^{N_0} u = \mathcal{R}^{N_0-N} (\mathcal{R}^N u)$ and applying again Lemma \ref{Lemma N bigger than 1} and (\ref{Radial N0}) implies $\mathcal{R}^N u \in b_{\alpha+pN_0-p(N_0-N)}=b_{\alpha+pN}$.

(d) $\Rightarrow$ (e): Assume there exists $N_0\in \mathbb{N}$ with $\alpha+pN_0>-1$ such that $(1-|x|^2)^{N_0} \mathcal{R}^{N_0} u \in L^p_\alpha$, that is $\mathcal{R}^{N_0} u \in b^{p}_{\alpha+pN_0}$. Take any $s,t \in \mathbb{R}$ such that $\alpha+pt>-1$. By Lemma \ref{Lemma Dst}, we have $D^t_s (\mathcal{R}^{N_0} u) \in b^{p}_{\alpha+pN_0+pt}$. By considering their actions on homogeneous expansions it is clear that the operators $D^t_s$ and $\mathcal{R}^{N_0}$ commute. Therefore $\mathcal{R}^{N_0} (D^t_s u) \in b^{p}_{\alpha+pN_0+pt}$. Hence we conclude that $D^t_s u \in b^{p}_{\alpha+pt}$ by Lemma \ref{Lemma N bigger than 1}.

(f) $\Rightarrow$ (a): Assume there exist $s_0, t_0 \in \mathbb{R}$ with $\alpha+pt_0>-1$ such that $(1-|x|^2)^{t_0} D^{t_0}_{s_0} u\in L^p_\alpha$, that is $D^{t_0}_{s_0}u \in b^{p}_{\alpha+pt_0}$. Pick $c$ with $c>(n+\alpha+pt_{0})/p-n$. Then by Lemma \ref{Lemma-Reproducing-Special},
\begin{equation*}
  D^{t_0}_{s_0} u(x) = \int_{\mathbb{B}} R_c(x,y) D^{t_0}_{s_0} u(y) d\nu_c(y).
\end{equation*}
We apply $D^{-t_0}_{s_0+t_0}$ to both sides, use (\ref{inverse of Dst}) on the left, push $D^{-t_0}_{s_0+t_0}$ into the integral by Lemma \ref{Lemma-Push-Dst} to obtain
\begin{equation*}
  u(x)=\int_{\mathbb{B}} D^{-t_0}_{s_0+t_0} R_c(x,y) \, D^{t_0}_{s_0} u(y) d\nu_c(y).
\end{equation*}
Take $N\in\mathbb{N}$ with $\alpha+pN>-1$ and let $m$ be a multi-index with $|m|=N$. Differentiating under the integral sign, we obtain
\begin{align*}
  \partial^m u (x)= \int_{\mathbb{B}} \partial^m \left(D^{-t_0}_{s_0+t_0} R_c(x,y)\right) D^{t_0}_{s_0} u(y) d\nu_c(y).
\end{align*}
Set $c=(n+\alpha')/p-n$. Note that $\alpha'>\alpha+pt_{0}$. Apply Lemma \ref{Lemma-Special} to obtain
\begin{equation*}
  (1-|x|^2)^{pN} |\partial^m u(x)|^{p} \lesssim (1-|x|^2)^{pN} \int_{\mathbb{B}} \big| \partial^m \left(D^{-t_0}_{s_0+t_0} R_c(x,y)\right)\big|^{p} \big|D^{t_0}_{s_0}u(y)\big|^{p} d\nu_{\alpha'}(y).
\end{equation*}
Applying Lemma \ref{Lemma-Estimate-Dst-Kernel} (with $n+c-t_0+N>(n+\alpha)/p+N-1>(n-1)/p>0$), we get
\begin{equation*}
  (1-|x|^2)^{pN} |\partial^m u(x)|^{p} \lesssim (1-|x|^2)^{pN} \int_{\mathbb{B}}  \frac{1}{[x,y]^{p(n+c-t_0+N)}} \big|D^{t_0}_{s_0}u(y)\big|^{p} d\nu_{\alpha'}(y).
\end{equation*}
Finally, using Fubini's theorem and Lemma \ref{Integral-[x,y]} we deduce that
\begin{equation*}
  \int_{\mathbb{B}}(1-|x|^2)^{pN} |\partial^m u(x)|^{p} d\nu_{\alpha}(x)\lesssim \int_{\mathbb{B}} (1-|y|^{2})^{pt_{0}} |D^{t_0}_{s_0}u(y)|^{p} d\nu_{\alpha}(y).
\end{equation*}

By retracing the above proof it is not hard to see that (\ref{Norm}) holds. We omit the details.
\end{proof}

We now verified all the requirements of Definition \ref{DefinitionAllAlpha} and  have the two-parameter Besov space family $b^{p}_\alpha$ with $0<p<1$ and $\alpha\in\mathbb{R}$ at hand . We will now show the basic properties of these spaces that can be obtained directly from this definition.

 As mentioned in the beginning, by (\ref{Norm}) we can endow $b^{p}_\alpha$, $0<p<1$  with many equivalent quasinorms. From now on
we will mainly use the quasinorms induced by $D^t_s$: Given $\alpha\in \mathbb{R}$, pick any $s,t$ with $\alpha+pt>-1$, then $\|(1-|x|^2)^t D^t_s u\|_{L^p_\alpha} = \| I^t_s u\|_{L^p_\alpha}$ is a quasinorm on $b^{p}_\alpha$; all these quasinorms are equivalent and we will denote any one of them by $\| \cdot \|_{b^{p}_\alpha}$ without indicating the dependence on $s$ and $t$.

Our first result shows that for fixed $p$, the spaces  $b^{p}_\alpha$  are all isomorphic. We will use later this important property many times, since it allows us to pass from one Besov space to another. We emphasize that the proposition below is true for every $t\in \mathbb{R}$ without any restriction.

\begin{proposition} \label{Proposition-The map Dst}
Let $0<p<1$ and $\alpha\in \mathbb{R}$. For any $s,t\in\mathbb{R}$, the map $D^t_s : b^{p}_\alpha \to b^{p}_{\alpha+pt}$ is an isomorphism and is an isometry when appropriate quasinorms are used in the two spaces.
\end{proposition}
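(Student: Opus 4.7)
The plan is to exploit the flexibility granted by Theorem \ref{Theorem-Equiv-Bloch} in choosing the parameters $\sigma,\tau$ that define the quasinorm on $b^{p}_\alpha$, and then to use the additive property \eqref{Additive-Dst} $D^{z}_{s+t}D^t_s=D^{z+t}_s$ to translate the quasinorm of $D^t_s u$ on $b^{p}_{\alpha+pt}$ into a quasinorm of $u$ on $b^{p}_\alpha$.

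First I would fix $u\in b^{p}_\alpha$ and pick a pair $(\sigma,\tau)\in\mathbb{R}^2$ with $\alpha+p\tau>-1$; by Theorem \ref{Theorem-Equiv-Bloch} we may use $\|u\|_{b^{p}_\alpha}=\|(1-|x|^2)^\tau D^\tau_\sigma u\|_{L^p_\alpha}$ (in particular, we will take $\sigma=s$ so that the additive property applies directly). Next I would set $\tau':=\tau-t$ and $\sigma':=s+t$ and observe that
\[
(\alpha+pt)+p\tau' \;=\; \alpha+p\tau \;>\; -1,
\]
so $(\sigma',\tau')$ is an admissible pair for defining a quasinorm on $b^{p}_{\alpha+pt}$. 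By \eqref{Additive-Dst},
\[
D^{\tau'}_{\sigma'}\bigl(D^t_s u\bigr)=D^{\tau-t}_{s+t}D^t_s u=D^{\tau}_s u.
\]
Writing out the $L^p$ integrals this gives
\[
\bigl\|(1-|x|^2)^{\tau'}D^{\tau'}_{\sigma'}(D^t_s u)\bigr\|^p_{L^p_{\alpha+pt}}
=\frac{1}{V_{\alpha+pt}}\int_{\mathbb{B}}|D^{\tau}_s u(x)|^p(1-|x|^2)^{p\tau+\alpha}\,d\nu(x),
\]
which coincides with $V_\alpha/V_{\alpha+pt}$ times $\|u\|_{b^{p}_\alpha}^p$. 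Hence if the quasinorms are chosen to absorb the constants $V_\alpha,V_{\alpha+pt}$ (for instance by using the unnormalized versions), we have exact equality, which is the isometry claim; in any case we get $\|D^t_s u\|_{b^{p}_{\alpha+pt}}\sim\|u\|_{b^{p}_\alpha}$, proving that $D^t_s$ is bounded from $b^{p}_\alpha$ into $b^{p}_{\alpha+pt}$.

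To show that $D^t_s$ is a bijection onto $b^{p}_{\alpha+pt}$, I would invoke the two-sided inverse identity \eqref{inverse of Dst}: $D^{-t}_{s+t}D^t_s=D^t_sD^{-t}_{s+t}=I$ on $h(\mathbb{B})$. Running the same argument above with the roles of $\alpha$ and $\alpha+pt$ interchanged (and with $t$ replaced by $-t$, $s$ by $s+t$) shows that $D^{-t}_{s+t}:b^{p}_{\alpha+pt}\to b^{p}_\alpha$ is also bounded (isometric) with the matching quasinorms. Combined with \eqref{inverse of Dst}, this gives that $D^t_s$ is surjective and its inverse is exactly $D^{-t}_{s+t}$.

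I do not anticipate a serious obstacle here; the only point requiring attention is bookkeeping the admissibility condition $\alpha+p\tau>-1$ through the parameter shift $\tau\mapsto\tau-t$ (which is automatic, as shown above) and keeping track of the normalization constants $V_\alpha$ when claiming a genuine isometry rather than an isomorphism. Crucially, the statement is valid for every $t\in\mathbb{R}$ with no sign restriction, because Theorem \ref{Theorem-Equiv-Bloch} lets us choose $\tau$ arbitrarily large so that $\alpha+p\tau>-1$ no matter how negative $t$ is.
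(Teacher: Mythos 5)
Your proposal is correct and follows essentially the same route as the paper: both arguments rest on the additive property $D^{z}_{s+t}D^t_s=D^{z+t}_s$ to convert an admissible quasinorm $\|I^{\tau}_{s}u\|_{L^p_\alpha}$ on $b^p_\alpha$ into the quasinorm $\|I^{\tau-t}_{s+t}(D^t_s u)\|_{L^p_{\alpha+pt}}$ on $b^p_{\alpha+pt}$ (the paper writes $\tau=t+t_1$), and then use the two-sided inverse $D^{-t}_{s+t}$ for surjectivity. Your explicit remark that the normalization constants $V_\alpha$ and $V_{\alpha+pt}$ must be absorbed to get a genuine isometry is a point the paper's own computation silently elides, so it is a welcome clarification rather than a deviation.
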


\begin{proof}
Let $u \in h(\mathbb{B})$ and put $v=D^t_s u \in h(\mathbb{B})$. Pick $t_1$ such that $\alpha+p(t+t_1)>-1$. Then by (\ref{Additive-Dst}), $D^{t_{1}}_{s+t} v =D^{t_{1}}_{s+t}D^t_s u = D^{t+t_{1}}_{s}u$. If $u \in b^{p}_\alpha$, then $I^{t+t_{1}}_{s}u \in L^p_{\alpha}$, and this is equivalent to $I^{t_{1}}_{s+t} v\in L^p_{\alpha+pt}$ which means by Definition \ref{DefinitionAllAlpha}, $v \in b^{p}_{\alpha+pt}$. On the other hand, if $v \in b^{p}_{\alpha+pt}$, then $I^{t_{1}}_{s+t} v\in L^p_{\alpha+pt}$ and this is equivalent to $I^{t+t_{1}}_{s}u \in L^p_{\alpha}$. This means  by Definition \ref{DefinitionAllAlpha} again $u \in b^{p}_{\alpha}$. Together with (\ref{inverse of Dst}), the isomorphism claim follows.

 We endow $b^{p}_\alpha$ with the quasinorm $\|u\|_{b^{p}_\alpha}=\| I^{t+t_{1}}_s u\|_{L^p_\alpha}$ and $b^{p}_{\alpha+pt}$ with the quasinorm $\|v\|_{b^{p}_{\alpha+pt}} = \|I^{t_{1}}_{s+t} \|_{L^p_{\alpha+pt}}$. By (\ref{Additive-Dst}),
\begin{align*}
  \|v\|_{b^{p}_{\alpha+pt}} &= \| I^{t_{1}}_{s+t} D^t_s u\|_{L^p_{\alpha+pt}} = \|(1-|x|^2)^{t_{1}} D^{t_{1}}_{s+t} (D^t_s u) \|_{L^p_{\alpha+pt}} \\
                             &= \|(1-|x|^2)^{t_{1}} D^{t_{1}+t}_{s} u\|_{L^p_{\alpha+pt}}=\|I^{t_{1}+t}_{s}u \|_{L^p_{\alpha}}=\|u\|_{b^{p}_\alpha}. \qedhere
\end{align*}
and this proves the claim on isometry.
\end{proof}
For a proof of the above lemma when $1\leq p<\infty$ see \cite[Corollary 9.2]{GKU2}.
Let $0 < r <1$ and $u_r : \mathbb{B}\to \mathbb{C}$, $u_r(x)=u(rx)$ be the dilate of $u$.

\begin{corollary}\label{Corollary-Basic Properties}
Let $0<p<1$ and $\alpha\in \mathbb{R}$. The following properties hold:
\begin{enumerate}
\item[(a)] The space $b^{p}_\alpha$ is a quasi-Banach space; i.e. it is a complete metric space with metric
\begin{equation*}
d(u,v)=\|u-v\|_{b^{p}_\alpha}^{p}
\end{equation*}
which satisfies the properties $d(u,v)=d(u-v,0)$ and $d(\lambda u, 0)=|\lambda|^{p}d(u,0)$ for $\lambda\in \mathbb{C}$.
\item[(b)]The set of harmonic polynomials is dense in each space $b^{p}_\alpha$. In particular, $u_r \to u$ (as $r\to 1^-$) in $b^{p}_\alpha$.
\item[(c)] $b^{p}_{\alpha}$ is separable.
\end{enumerate}
\end{corollary}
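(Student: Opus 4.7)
For part (a), fix $s,t$ with $\alpha+pt>-1$ and use the quasinorm $\|u\|_{b^p_\alpha}:=\|I^t_s u\|_{L^p_\alpha}$. Linearity of $I^t_s$ lets the metric $d(u,v)=\|u-v\|_{b^p_\alpha}^{p}$ inherit directly from $L^p_\alpha$ the translation invariance $d(u,v)=d(u-v,0)$ and the scaling $d(\lambda u,0)=|\lambda|^{p}d(u,0)$. For completeness, the plan is to invoke Proposition \ref{Proposition-The map Dst}: $D^t_s$ is an isometric isomorphism from $b^p_\alpha$ onto the Bergman space $b^p_\beta$ with $\beta=\alpha+pt>-1$, so it suffices to treat the Bergman case. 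There a Cauchy sequence converges in $L^p_\beta$ to some $f$, while the pointwise growth estimate of Lemma \ref{growth} forces it to be Cauchy uniformly on compact subsets of $\mathbb{B}$; hence $f$ is harmonic and lies in $b^p_\beta$.

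For part (b), since $D^t_s$ acts diagonally on homogeneous expansions via $D^t_s u_k=(\gamma_k(s+t)/\gamma_k(s))u_k$, both $D^t_s$ and its inverse preserve the class of harmonic polynomials. Proposition \ref{Proposition-The map Dst} then reduces density in $b^p_\alpha$ to density in the Bergman space $b^p_\beta$ with $\beta>-1$. For $g\in b^p_\beta$ I will prove $g_r\to g$ in $L^p_\beta$ as $r\to 1^-$ (step 1), and then observe that each dilate $g_r$ extends harmonically to $(1/r)\mathbb{B}\supset\overline{\mathbb{B}}$, so the partial sums $\sum_{k\leq K} r^k g_k$ of its homogeneous expansion converge uniformly to $g_r$ on $\overline{\mathbb{B}}$ (step 2); uniform convergence on $\overline{\mathbb{B}}$ clearly implies convergence in $L^p_\beta$. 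The ``in particular'' claim $u_r\to u$ in $b^p_\alpha$ follows from step 1 together with the identity $D^t_s u_r=g_r$ (compared coefficient by coefficient).

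Step 1 is the main obstacle. Pointwise $g(rx)\to g(x)$, so what is needed is a uniformly integrable dominating function. My plan is to exploit the subharmonic behaviour \eqref{equsubharmonic} on balls $B(rx,(1-|x|)/4)\subset\mathbb{B}$, on which $(1-|y|^2)^\beta$ and $(1-|x|^2)^\beta$ are comparable. This yields
\[
|g(rx)|^{p}(1-|x|^2)^\beta \lesssim \frac{1}{(1-|x|^2)^n}\int_{B(rx,(1-|x|)/4)}|g(y)|^{p}\,d\nu_\beta(y),
\]
and after Fubini together with the bounded overlap of these shrinking balls one obtains a uniform bound $\|g_r\|_{L^p_\beta}\lesssim \|g\|_{L^p_\beta}$ and, with a bit more care, an $r$-independent dominating function valid for $r\in[1/2,1)$. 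Lebesgue's dominated convergence theorem then finishes step 1.

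Finally, for part (c), let $\mathcal{P}_\mathbb{Q}$ denote the countable set of harmonic polynomials whose coefficients (in a fixed orthonormal basis $\{Y_k^j\}$ of each $\mathcal{H}_k(\mathbb{S})$) lie in $\mathbb{Q}+i\mathbb{Q}$. Any harmonic polynomial is approximated by elements of $\mathcal{P}_\mathbb{Q}$ in the sup norm on $\overline{\mathbb{B}}$, hence in $\|\cdot\|_{b^p_\alpha}$, since on each finite-dimensional space of harmonic polynomials of bounded degree the quasinorm $\|\cdot\|_{b^p_\alpha}$ is dominated by a multiple of the $L^\infty(\overline{\mathbb{B}})$-norm. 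Combined with part (b), $\mathcal{P}_\mathbb{Q}$ is a countable dense subset of $b^p_\alpha$.
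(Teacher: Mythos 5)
Your reduction is exactly the paper's: the paper proves this corollary by citing the unweighted/weighted Bergman case ($\alpha>-1$) as known and transferring to general $\alpha$ via the isometric isomorphism $D^t_s$ of Proposition \ref{Proposition-The map Dst}, the fact that $D^t_s$ preserves harmonic polynomials, and the identity $D^t_s(u_r)=(D^t_s u)_r$ --- all three of which you use. The difference is only that you go on to actually prove the $\alpha>-1$ facts that the paper dismisses as ``well known'' and ``elementary''. Those added arguments are essentially sound: completeness via Lemma \ref{growth} forcing local uniform convergence, and separability via rational-coefficient polynomials together with equivalence of quasinorms on finite-dimensional spaces, are both fine.

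The one place you should be careful is Step 1 of part (b). The displayed inequality rests on the claim that $(1-|y|^2)^\beta\sim(1-|x|^2)^\beta$ for $y\in B(rx,(1-|x|)/4)$ uniformly in $r$. One direction, $1-|y|\gtrsim 1-|x|$, is always true (since $1-|rx|\geq 1-|x|$), but the reverse fails badly for $r$ bounded away from $1$: if $r=3/4$ and $|x|\to 1^-$, points $y$ near $rx$ have $1-|y|\approx 1/4$ while $1-|x|\to 0$. So for $\beta\in(-1,0)$ the inequality as written does not hold uniformly over $r\in[1/2,1)$, and the proposed $r$-independent dominating function is not yet established. This is fixable --- restrict to $r$ near $1$ and compare $1-|y|^2$ with $1-|rx|^2$ rather than with $1-|x|^2$ (giving $\|g_r\|_{L^p_\beta}\lesssim\|g\|_{L^p_\beta}$ after a change of variables), or dominate by a radial maximal function whose $L^p_\beta$ bound uses the subharmonic behaviour \eqref{equsubharmonic} with balls centered at $rx$ of radius comparable to $1-|rx|$ --- but as stated the step has a gap for negative $\beta$. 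None of this affects the validity of the reduction itself, which matches the paper.
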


\begin{proof}
  It is well known that these properties hold for the unweighted spaces $b^{p}_0$. It is also elementary to verify them for the weighted ones
$b^{p}_{\alpha}$, $\alpha>-1$, too. The $b^{p}_{\alpha}$ for $\alpha\leq-1$ then follows from the isomorphism in Proposition \ref{Proposition-The map Dst}, the fact that $D^t_s$ maps polynomials to polynomials and the simple identity $D^t_s (u_r)= (D^t_s u)_r$.
\end{proof}

In particular, the spaces $b^{p}_{\alpha}$ are F-spaces. One of the properties of an F-space
is that the closed graph theorem is valid for it.

We next prove the following embedding of harmonic Bergman spaces. In the special case $\alpha>-1$, it is just Lemma \ref{Lemma-Special1}.
For a complete description of all possible  inclusion relations between harmonic Besov spaces, see  \cite{DU2}.

\begin{proposition}\label{embedding-inc}
Let $0<p<1$, $\alpha\in \mathbb{R}$. If $\rho=(n+\alpha)/p-n$, Then
$ b^{p}_\alpha \subset b^{1}_{\rho} $ and the inclusion is continuous.
\end{proposition}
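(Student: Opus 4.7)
The plan is to reduce the general case to the special case $\alpha>-1$ already established in Lemma \ref{Lemma-Special1}, using the operators $D^t_s$ as the tool for shifting the $\alpha$ parameter. Fix $s,t\in\mathbb{R}$ with $\alpha+pt>-1$ (so the pair $(s,t)$ may be used to define the quasinorm on $b^{p}_\alpha$), and set $v:=D^t_s u\in h(\mathbb{B})$. By Proposition \ref{Proposition-The map Dst}, $D^t_s:b^{p}_\alpha\to b^{p}_{\alpha+pt}$ is an isomorphism (indeed an isometry for the appropriate choice of quasinorms), so $u\in b^{p}_\alpha$ is equivalent to $v\in b^{p}_{\alpha+pt}$, with $\|v\|_{b^{p}_{\alpha+pt}}\sim \|u\|_{b^{p}_\alpha}$.

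Since $\alpha+pt>-1$, Lemma \ref{Lemma-Special1} applies to $v\in b^{p}_{\alpha+pt}$ with the shifted exponent
\[
\rho'=\frac{n+(\alpha+pt)}{p}-n=\rho+t,
\]
yielding
\[
\int_{\mathbb{B}}|v(x)|\,(1-|x|^2)^{\rho+t}\,d\nu(x)\lesssim \|v\|_{b^{p}_{\alpha+pt}}.
\]
Rewriting the left-hand side in terms of $u$ gives
\[
\int_{\mathbb{B}}(1-|x|^2)^{t}\,|D^t_s u(x)|\,(1-|x|^2)^{\rho}\,d\nu(x)\;\sim\;\|I^t_s u\|_{L^1_\rho}.
\]

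To interpret this as a quasinorm on $b^1_\rho$, I need $\rho+t>-1$, i.e.\ $\rho+1\cdot t>-1$. This is the Besov condition ``$\alpha+pt>-1$'' for the parameters $(p,\alpha)=(1,\rho)$, and it follows easily from our standing hypothesis $\alpha+pt>-1$: indeed $t>(-1-\alpha)/p$ forces
\[
\rho+t>\rho+\frac{-1-\alpha}{p}=\frac{n-1}{p}-n=\frac{(n-1)(1-p)}{p}-1>-1
\]
for $0<p<1$ and $n\geq 2$. Therefore, by Theorem \ref{Theorem-Equiv-Bloch} applied with exponent $1$ (i.e.\ by the characterization of $b^1_\rho$ through $D^t_s$ recalled in Section \ref{kernels-dst}), the estimate above says precisely that $u\in b^1_\rho$ with $\|u\|_{b^1_\rho}\lesssim \|u\|_{b^{p}_\alpha}$, giving continuity of the inclusion.

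There is really no hard step here: the only thing to keep in mind is that both ends of the chain must be described by compatible $(s,t)$-parameters, which is exactly why Proposition \ref{Proposition-The map Dst} (which holds for \emph{every} $s,t\in\mathbb{R}$ without restriction) is the essential ingredient. Once the shift $u\mapsto D^t_s u$ lands us in a Bergman regime, the known case Lemma \ref{Lemma-Special1} does all the work.
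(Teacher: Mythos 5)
Your proof is correct and follows essentially the same route as the paper: shift into the Bergman regime via $D^t_s$ using Proposition \ref{Proposition-The map Dst}, apply Lemma \ref{Lemma-Special1} to $D^t_s u\in b^p_{\alpha+pt}$, and reinterpret the resulting integral as the $b^1_\rho$ quasinorm. The only (minor) difference is that the paper simply asks for $t$ large enough that both $\alpha+pt>-1$ and $\rho+t>-1$ hold, whereas you verify that the second condition is automatic from the first when $n\geq 2$ and $0<p<1$ --- a correct and slightly cleaner observation.
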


\begin{proof}
Pick $s,t \in \mathbb{R}$ and $t$ large enough such that $\alpha+pt>-1$ and $t+\rho>-1$. Let $u \in b^{p}_\alpha $. Then by Proposition \ref{Proposition-The map Dst} it follows that $D^t_s u \in  b^{p}_{\alpha+pt} $. By Lemma \ref{Lemma-Special1}, we have $D^t_s u \in  b^{1}_{t+\rho}$, where $(n+\alpha+pt)/p-n=t+\rho$. Equivalently, the function $(1-|x|^{2})^{t}D^t_s u$ belongs to $ L^{1}_{\rho}$, that is $ u  \in b^{1}_{\rho}$.
\end{proof}

\section{Pointwise Estimates, Duality and Atomic Decomposition}\label{pe-dual-atomic}

\subsection{Pointwise Estimates}\label{subsection-Pointwise Estimates}
We often need to know how fast a function in $b^{p}_{\alpha}$ grows near the boundary of $\mathbb{B}$. In this subsection, using Proposition \ref{embedding-inc}
and Theorem 13.1 of \cite{GKU2}, we obtain uniform growth rates for all $u \in b^{p}_{\alpha}$ near the boundary of $\mathbb{B}$. More precisely, we have the following result.
\begin{theorem}\label{Pestimates}
Let $0<p<1$ and $\alpha\in \mathbb{R}$. Then for all $u \in b^{p}_{\alpha}$ and $x\in \mathbb{B}$,
\begin{equation*}
  |u(x)| \lesssim \|u\|_{b^{p}_{\alpha}}
      \begin{cases}
       1, &\text{if $\, \alpha\leq -n$};\\
         \dfrac{1}{(1-|x|^2)^{(n+\alpha)/p}}, &\text{if $\, \alpha > -n$}.
        \end{cases}
\end{equation*}
\end{theorem}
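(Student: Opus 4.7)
The plan is to reduce the statement for $b^{p}_{\alpha}$ ($0<p<1$) to the corresponding pointwise estimate for the Bergman--Besov space $b^{1}_{\rho}$ with $\rho=(n+\alpha)/p-n$, which is already available from Theorem~13.1 of \cite{GKU2}. This is possible because of the continuous embedding $b^{p}_{\alpha}\subset b^{1}_{\rho}$ established in Proposition~\ref{embedding-inc}.

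Concretely, given $u\in b^{p}_{\alpha}$, the first step is to apply Proposition~\ref{embedding-inc} to obtain
\[
 u\in b^{1}_{\rho}, \qquad \|u\|_{b^{1}_{\rho}}\lesssim \|u\|_{b^{p}_{\alpha}},
\]
where $\rho=(n+\alpha)/p-n$. The second step is to invoke the known pointwise estimate for $b^{1}_{\rho}$ with $\rho\in\mathbb{R}$ from \cite[Theorem~13.1]{GKU2}, which yields a uniform bound of the form $|u(x)|\lesssim \|u\|_{b^{1}_{\rho}}$ when $n+\rho\leq 0$ and of the form $|u(x)|\lesssim \|u\|_{b^{1}_{\rho}}(1-|x|^{2})^{-(n+\rho)}$ when $n+\rho>0$.

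The third step is bookkeeping: translating the dichotomy in terms of $\rho$ back to the parameter $\alpha$. The identity $n+\rho=(n+\alpha)/p$ is immediate, so the growth exponent $(1-|x|^{2})^{-(n+\rho)}$ becomes exactly $(1-|x|^{2})^{-(n+\alpha)/p}$. Moreover $n+\rho\leq 0$ is equivalent to $(n+\alpha)/p\leq 0$, i.e.\ to $\alpha\leq -n$ (since $p>0$), while $n+\rho>0$ corresponds to $\alpha>-n$. Combining these with the norm inequality from the embedding gives the desired bound.

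I do not expect a serious obstacle: the whole proof is essentially a two-line reduction, since the heavy lifting (proving the pointwise estimate for $b^{1}_{\rho}$ at all real $\rho$, and proving the embedding $b^{p}_{\alpha}\hookrightarrow b^{1}_{\rho}$) has already been done. The only point that requires a little care is the boundary case $\alpha=-n$, where $\rho=-n$ makes $n+\rho=0$; one should check that the $n+\rho\leq 0$ branch of \cite[Theorem~13.1]{GKU2} indeed delivers the uniform bound $|u(x)|\lesssim \|u\|_{b^{1}_{\rho}}$ absorbing any logarithmic factor, so that the stated estimate covers $\alpha=-n$ as written.
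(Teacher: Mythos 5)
Your proposal is correct and follows exactly the paper's own argument: embed $b^{p}_{\alpha}$ into $b^{1}_{\rho}$ with $\rho=(n+\alpha)/p-n$ via Proposition \ref{embedding-inc} and then invoke Theorem 13.1 of \cite{GKU2}, with the translation $n+\rho=(n+\alpha)/p$ giving the stated dichotomy. Your extra remark about the boundary case $\alpha=-n$ is a sensible check but does not change the route.
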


\begin{proof}
Suppose $u \in b^{p}_{\alpha}$. Then by Proposition \ref{embedding-inc} $ u \in  b^{1}_{\rho}$ where $\rho=(n+\alpha)/p-n$. So if we apply
Theorem 13.1 of \cite{GKU2} for $p=1$, and $q=\rho$ with $\|u\|_{b^{1}_{\rho}} \lesssim\|u\|_{b^{p}_{\alpha}}$, we obtain all the growth rates at once.
\end{proof}

As a corollary of the above theorem, point evaluations are bounded linear functionals on $b^{p}_{\alpha}$. Holomorphic analogues of this theorem can be found in \cite[Section 6]{ZZ}.

Derivatives of functions in $ b^{p}_{\alpha}$ also have  uniform growth rates near the boundary of $\mathbb{B}$.
\begin{corollary}
Let $0<p<1$ and $\alpha\in \mathbb{R}$. Then for any $s,t\in \mathbb{R}$
\begin{equation*}
  |D^t_s u(x)| \lesssim \|u\|_{b^{p}_{\alpha}}
      \begin{cases}
       1, &\text{if $\, \alpha+pt\leq-n$};\\
         \dfrac{1}{(1-|x|^2)^{(n+\alpha+pt)/p}}, &\text{if $\, \alpha+pt > -n$}.
        \end{cases}
\end{equation*}
\end{corollary}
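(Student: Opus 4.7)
The plan is to reduce this statement to Theorem \ref{Pestimates} by transferring the problem along the isomorphism $D^t_s$. Given $u \in b^p_\alpha$, Proposition \ref{Proposition-The map Dst} tells us that $D^t_s u$ lies in $b^p_{\alpha+pt}$ for any $s,t \in \mathbb{R}$, with the quasinorm identity $\|D^t_s u\|_{b^p_{\alpha+pt}} \sim \|u\|_{b^p_\alpha}$ (with appropriate choice of the auxiliary parameter used to define the quasinorm, which is immaterial since all such choices give equivalent quasinorms).

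With that in hand, I would simply apply Theorem \ref{Pestimates} to the harmonic function $v := D^t_s u$, which belongs to $b^p_{\alpha+pt}$. The dichotomy on $\alpha + pt$ versus $-n$ coming from Theorem \ref{Pestimates} applied to $v$ is exactly the dichotomy claimed in the corollary, and we get
\begin{equation*}
|D^t_s u(x)| = |v(x)| \lesssim \|v\|_{b^p_{\alpha+pt}}
\begin{cases}
1, & \text{if } \alpha+pt \leq -n,\\[4pt]
\dfrac{1}{(1-|x|^2)^{(n+\alpha+pt)/p}}, & \text{if } \alpha+pt > -n.
\end{cases}
\end{equation*}
Combining with $\|v\|_{b^p_{\alpha+pt}} \sim \|u\|_{b^p_\alpha}$ finishes the proof.

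There is no substantial obstacle here: the corollary is an immediate consequence of the isomorphism property of $D^t_s$ between Besov spaces with shifted parameters, together with the previously established pointwise growth estimate. The only thing worth being careful about is that Theorem \ref{Pestimates} is stated for all $\alpha \in \mathbb{R}$, so no extra hypothesis on $\alpha + pt$ (such as $\alpha + pt > -1$) needs to be imposed, and thus the conclusion really is valid for every $s,t \in \mathbb{R}$ without restriction, as claimed.
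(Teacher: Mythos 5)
Your proof is correct and follows exactly the paper's own argument: apply Proposition \ref{Proposition-The map Dst} to place $D^t_s u$ in $b^p_{\alpha+pt}$ with comparable quasinorm, then invoke Theorem \ref{Pestimates} for that space. Nothing further is needed.
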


\begin{proof}
It suffices to apply first Proposition \ref{Proposition-The map Dst} and then Theorem \ref{Pestimates}.
\end{proof}
We can prove a little more than Theorem \ref{Pestimates} by using polynomial approximations.
\begin{proposition}\label{Pestimates1}
Let $0<p<1$ and $\alpha\in \mathbb{R}$. If $\alpha>-n$ and $u \in b^{p}_{\alpha}$, then
\begin{equation*}
 \lim_{|x|\to 1^{-}} (1-|x|^{2})^{(n+\alpha)/p}\big|u(x)\big| =0
\end{equation*}
\end{proposition}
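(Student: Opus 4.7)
The plan is to prove this "little-o" refinement of Theorem \ref{Pestimates} by a standard density/approximation argument, exploiting that harmonic polynomials are dense in $b^p_\alpha$ (Corollary \ref{Corollary-Basic Properties}(b)) together with the uniform growth estimate in Theorem \ref{Pestimates}.

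First I would fix $\varepsilon > 0$ and use the density of harmonic polynomials to pick a harmonic polynomial $P$ with $\|u - P\|_{b^p_\alpha} < \varepsilon$. Since $P$ is continuous on $\overline{\mathbb{B}}$, it is uniformly bounded there, say by $M_P$. Because $\alpha > -n$ we have the exponent $(n+\alpha)/p > 0$, so
\[
(1-|x|^2)^{(n+\alpha)/p} |P(x)| \leq M_P (1-|x|^2)^{(n+\alpha)/p} \longrightarrow 0 \qquad (|x|\to 1^-).
\]

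For the remainder $u - P \in b^p_\alpha$, Theorem \ref{Pestimates} (in the case $\alpha > -n$) yields a constant $C$, independent of $u, P, x$, with
\[
(1-|x|^2)^{(n+\alpha)/p} |u(x) - P(x)| \leq C \|u - P\|_{b^p_\alpha} < C\varepsilon
\]
for every $x \in \mathbb{B}$. Combining the two estimates via the triangle inequality gives
\[
\limsup_{|x|\to 1^-} (1-|x|^2)^{(n+\alpha)/p} |u(x)| \leq C \varepsilon,
\]
and since $\varepsilon > 0$ was arbitrary, the limit is $0$.

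There is no real obstacle here; the only point that requires care is that the approximation in $b^p_\alpha$ is in a quasinorm when $\alpha \leq -1$, but this does not affect the argument because Theorem \ref{Pestimates} bounds $|u(x) - P(x)|$ by that very quasinorm. The hypothesis $\alpha > -n$ is used twice: once to guarantee $(n+\alpha)/p > 0$ (so the polynomial term vanishes at the boundary), and once so that Theorem \ref{Pestimates} yields the genuinely unbounded weight $(1-|x|^2)^{-(n+\alpha)/p}$ rather than a uniform bound.
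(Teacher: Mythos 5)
Your proof is correct and follows essentially the same route as the paper: approximate $u$ in the $b^p_\alpha$ quasinorm by a function bounded on $\overline{\mathbb{B}}$ (the paper uses the dilate $u_{r_0}$, you use a harmonic polynomial, both supplied by Corollary \ref{Corollary-Basic Properties}(b)), apply the $\alpha>-n$ case of Theorem \ref{Pestimates} to the difference, and let $|x|\to 1^-$. The choice of approximant is immaterial, so this is the same argument.
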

\begin{proof}
Let $u_r(x)=u(rx)$, $0 < r <1$  be the dilate of $u$. Apply  $\alpha>-n$ part of  Theorem \ref{Pestimates} to $u-u_{r}$ and multiply both sides by $(1-|x|^{2})^{(n+\alpha)/p}$. Let $\varepsilon>0$. By Corollary \ref{Corollary-Basic Properties} (b), there is a $0<r_{0}<1$ such that
$(1-|x|^{2})^{(n+\alpha)/p}\big|u(x)-u_{r_{0}}(x)\big|<\varepsilon$ for all $x\in \mathbb{B}$. Then $(1-|x|^{2})^{(n+\alpha)/p}\big|u(x)\big|<\varepsilon+(1-|x|^{2})^{(n+\alpha)/p}\big|u_{r_{0}}(x)\big|$ for all $x\in \mathbb{B}$.  Letting $|x|\to 1^{-}$ and noting that $u_{r_{0}}$ is bounded in $\overline{\mathbb{B}}$, we obtain the desired result.
\end{proof}
\begin{proposition}\label{Pestimates2}
Let $0<p<1$ and $\alpha\in \mathbb{R}$. Suppose $\alpha\leq-n$, then every function in $ b^{p}_{\alpha}$ is continuous on $\overline{\mathbb{B}}$.
\end{proposition}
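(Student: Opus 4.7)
The plan is to combine the uniform pointwise bound from Theorem \ref{Pestimates} (in the $\alpha\le -n$ regime) with the density of polynomials guaranteed by Corollary \ref{Corollary-Basic Properties}(b). The key observation is that when $\alpha\le -n$, Theorem \ref{Pestimates} gives a bound on $|u(x)|$ that is uniform in $x\in\mathbb{B}$, controlled only by the quasinorm $\|u\|_{b^p_\alpha}$. Applied to a difference of two functions in $b^p_\alpha$, this translates convergence in the quasinorm into uniform convergence on $\mathbb{B}$.

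Concretely, I would fix $u\in b^p_\alpha$ and consider the dilates $u_r(x)=u(rx)$ for $0<r<1$. Each $u_r$ is harmonic on a neighborhood of $\overline{\mathbb{B}}$, hence continuous on $\overline{\mathbb{B}}$. Applying the $\alpha\le -n$ case of Theorem \ref{Pestimates} to the element $u-u_r\in b^p_\alpha$ yields
\begin{equation*}
\sup_{x\in\mathbb{B}}\,|u(x)-u_r(x)|\lesssim \|u-u_r\|_{b^p_\alpha}.
\end{equation*}
By Corollary \ref{Corollary-Basic Properties}(b) the right-hand side tends to $0$ as $r\to 1^-$, so $u_r\to u$ uniformly on $\mathbb{B}$.

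Since a uniform limit on $\mathbb{B}$ of functions that are continuous on $\overline{\mathbb{B}}$ is itself continuous on $\overline{\mathbb{B}}$ (the Cauchy criterion lets us extend $u$ continuously to the boundary and the extension agrees with the limit of the $u_r$ there), the conclusion follows.

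There is essentially no technical obstacle here beyond invoking the two ingredients in the correct order; the only subtlety is verifying that Theorem \ref{Pestimates} is really applicable to $u-u_r$, which is immediate since $b^p_\alpha$ is a vector space and $u_r\in b^p_\alpha$ (indeed $u_r$ is harmonic on $\overline{\mathbb{B}}$ and thus lies in every $b^p_\alpha$, as noted right after Definition \ref{DefinitionAllAlpha}).
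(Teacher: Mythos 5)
Your proof is correct, but it follows a different route from the paper's. The paper's own argument is a two-line reduction: it invokes Proposition \ref{embedding-inc} to embed $b^p_\alpha$ into $b^1_\rho$ with $\rho=(n+\alpha)/p-n$, observes that $\rho\leq -n$ when $\alpha\leq -n$ (and $0<p<1$), and then cites Theorem 13.2 of \cite{GKU2} for the $p=1$ case. You instead stay inside the paper: you combine the uniform bound $|u(x)|\lesssim\|u\|_{b^p_\alpha}$ from the $\alpha\leq -n$ branch of Theorem \ref{Pestimates} with the convergence $u_r\to u$ in $b^p_\alpha$ from Corollary \ref{Corollary-Basic Properties}(b), getting uniform convergence of the dilates on $\mathbb{B}$ and hence a continuous extension to $\overline{\mathbb{B}}$ via the uniform Cauchy criterion. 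This is exactly the dilation technique the paper itself uses to prove Proposition \ref{Pestimates1} in the complementary regime $\alpha>-n$, so your argument is the natural companion to that proof and has the virtue of not requiring the additional external citation of Theorem 13.2 of \cite{GKU2} (it still leans on Theorem \ref{Pestimates}, which in turn rests on Theorem 13.1 of \cite{GKU2}, so neither route is fully self-contained). The only details worth making explicit are the ones you already flagged: $u-u_r\in b^p_\alpha$ because $u_r$ is harmonic on a neighborhood of $\overline{\mathbb{B}}$, the implied constant in Theorem \ref{Pestimates} is independent of the function, and the uniform limit on the dense set $\mathbb{B}$ of functions continuous on $\overline{\mathbb{B}}$ extends continuously to the closed ball.
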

\begin{proof}
Let $u\in b^{p}_{\alpha}$ with $\alpha\leq-n$. Then by Proposition \ref{embedding-inc} $ u \in  b^{1}_{\rho}$ where $\rho=(n+\alpha)/p-n$. As $\rho \leq -n$, the required result follows from Theorem 13.2 of \cite{GKU2}.
\end{proof}

\subsection{Duality}\label{subsection-Duality}
In this subsection we will prove Theorem \ref{Theorem-Dual-of-b1q}. More precisely, we will show that $(b^p_\alpha)'$, $0<p<1$ can be identified with $b^{\infty}_\beta$ for any $\alpha,\beta \in \mathbb{R}$ without any restriction. The aforementioned identification can be obtained using many different pairings. By results of the previous section, the point evaluation at any $x \in \mathbb{B}$ is
a bounded linear functional on $b^p_\alpha$. Therefore, $(b^p_\alpha)'$ is a nontrivial Banach space for all  $0<p<1$ and all real $\alpha$.

At the beginning, we defined harmonic Bloch spaces $b^\infty_\alpha$ in terms of partial derivatives. Analogous to Theorem \ref{Theorem-Equiv-Bloch}  we have the following: Given $\alpha\in\mathbb{R}$, pick $s,t\in\mathbb{R}$ such that $\alpha+t>0$. Then $u\in h(\mathbb{B})$ belongs to $b^\infty_\alpha$ if and only if $I^t_s u \in L^\infty_\alpha$ and $\| I^t_s u \|_{L^\infty_\alpha}$ is a norm on $b^\infty_\alpha$ (see Theorem 1.2 of \cite{DU1}).

When $0<p<1$, comparing with the arguments in the proves of \cite[Theorem 13.4]{GKU2} and  \cite[Theorem 5.4]{DU1} for the case $1\leq p<\infty$, we are led to deal with some obstructions. First, $b^p_\alpha$  is not locally convex for $0<p<1$, so the Hahn-Banach theorem fails to hold. If we assume that it holds, $(L^p_\alpha)'= {0}$. This clearly cannot be the way to proceed, as $(b^p_\alpha)'$ contain the point-evaluations. Another obstruction is that we can not express $u\in b^p_\alpha$ with the help of integral representation ensuing from Bergman-Besov projections. From this point, the embedding result Proposition \ref{embedding-inc} will be crucial in the proof of theorem and it allows us to use integral representation for the function in  $b^p_\alpha$.

Note that our proof also suits to the case $p = 1$. On the other hand, Theorem 5.4 of \cite{DU1} and Teorem \ref{Theorem-Dual-of-b1q}  show that the dual space of $b^p_\alpha$ is isomorphic to that of $b^{1}_{\rho}$ for $0<p<1$, where $\rho=(n+\alpha)/p-n$. We are now ready to prove the theorem.

\begin{proof}[Proof of Theorem \ref{Theorem-Dual-of-b1q}]
 Let $t'=s-\rho-\beta$ and $s'=t+\rho+\beta$. Then by (\ref{Dual-S}) and (\ref{Dual-T}), we have
\begin{align}
  s' &> \beta-1, \label{Dual-S'}\\
  \beta +t' &> 0. \label{Dual-T'}
\end{align}
If $v\in b^{\infty}_\beta$, then $I^{t'}_{s'} v \in L^\infty_\beta$ by (\ref{Dual-T'}) and if $u\in b^p_\alpha$, then $I^t_s u\in L^p_\alpha$ by (\ref{Dual-T}). Therefore the pairing (\ref{Dual-Pairing}) defines a bounded linear functional namely $\mathcal{L}_{v}$ on $b^p_\alpha$. Then by  Proposition \ref{embedding-inc}
\begin{equation*}
 |\mathcal{L}_{v}(u)| \leq \|v\|_{b^{\infty}_{\beta}}\int_{\mathbb{B}} |I^t_s u|  \ d\nu_{\rho}  \lesssim  \|v\|_{b^{\infty}_{\beta}}\|u\|_{b_{\alpha}^{p}}.
\end{equation*}
This gives that $\|\mathcal{L}_{v}\|\lesssim \|v\|_{b^{\infty}_{\beta}}$ .

Conversely, let $\mathcal{L}\in (b^p_\alpha)'$. We will show that there exists $v\in b^{\infty}_\beta$ such that $\mathcal{L}(u)=\langle u, v \rangle$. Let $u \in b^p_\alpha$.
By Proposition \ref{embedding-inc}, we have $u \in b^1_\rho$. Since $\rho+1<s+1$ and $\rho+t>-1$ hold by (\ref{Dual-S}) and (\ref{Dual-T}), applying the integral representation (\ref{Reproducing}) gives  that
\begin{equation*}
u(x)=\frac{V_s}{V_{s+t}}\int_{\mathbb{B}} R_s(x,y) I^t_s u(y) \, d\nu_s(y).
\end{equation*}
We claim that
\begin{equation}\label{operator-L}
\mathcal{L}(u)=\frac{V_s}{V_{s+t}}\int_{\mathbb{B}}  \mathcal{L}_{x}\big(R_s(x,y)\big) I^t_s u(y) \, d\nu_s(y) .
\end{equation}
Here the subindex in $\mathcal{L}_{x}$ indicates the variable of the function with respect to which $\mathcal{L}$ operates. In case $u=Y_{k}^{j}$,
(\ref{operator-L}) is verified in the same way as \cite[Proof of Theorem 6.1]{R}. By linearity (\ref{operator-L}) holds for every harmonic polynomial. Let now $u\in b_{\alpha}^{p}$ be arbitrary. By Corollary \ref{Corollary-Basic Properties} (b), there exists a sequence $(u_{n})$ of harmonic polynomials such that $\|u_{n}-u\|_{b^{p}_{\alpha}}=\|I^t_s u_{n}-I^t_s u\|_{L^{p}_{\alpha}}\to 0$. Then
\begin{equation*}
\mathcal{L}(u)= \lim_{n \to \infty} \mathcal{L}(u_{n})= \lim_{n \to \infty}\int_{\mathbb{B}}  \mathcal{L}_{x}\big(R_s(x,y)\big) I^t_s u_{n}(y) \, d\nu_s(y)
\end{equation*}
and all we need to show is
\begin{equation}\label{operator-L1}
\int_{\mathbb{B}}  \mathcal{L}_{x}\big(R_s(x,y)\big) \left(I^t_s u_{n}(y)- I^t_s u(y)\right) \, d\nu_s(y)\to 0 \quad \text{as} \quad n\to\infty .
\end{equation}
Now, $\mathcal{L}_{x}\big(R_s(x,y)\big)$ is harmonic function of $y$ and by Lemma \ref{norm-kernel}
\begin{equation}\label{operator-L2}
|\mathcal{L}_{x}\big(R_s(x,y)\big) | \leq \|\mathcal{L}\|\|R_s(\cdot,y)\|_{b^{p}_{\alpha}}\lesssim \frac{\|\mathcal{L}\|}{(1-|y|^{2})^{(n+s)-(n+\alpha)/p}}.
\end{equation}
Using Lemma \ref{Lemma-Special} (with $I^t_s u(y)=(1-|y|^{2})^{t}D^t_s u(y)$, $D^t_s u$ being harmonic) and (\ref{operator-L2}) we deduce
\begin{align*}
&\int_{\mathbb{B}} | \mathcal{L}_{x}\big(R_s(x,y)\big)| \left|I^t_s u_{n}(y)- I^t_s u(y)\right| \, d\nu_s(y)\\
&\lesssim \int_{\mathbb{B}} | \mathcal{L}_{x}\big(R_s(x,y)\big)|^{p} \left|I^t_s u_{n}(y)- I^t_s u(y)\right|^{p} (1-|y|^{2})^{p(n+s)-n}\, d\nu(y)\\
&\lesssim \int_{\mathbb{B}} \left|I^t_s u_{n}(y)- I^t_s u(y)\right|^{p} (1-|y|^{2})^{\alpha}\, d\nu(y)\\
&=\|I^t_s u_{n}- I^t_s u\|_{L^{p}_{\alpha}}\to 0 \quad \text{as} \quad n\to\infty .
\end{align*}
Thus (\ref{operator-L1}) holds and our claim is verified.

Define
\begin{equation*}
v(y)=\frac{V_{s-t'}}{V_{s+t}} D^{-t'}_{s+t'} \overline{\mathcal{L}_{x}\big(R_s(x,y)\big)}
\end{equation*}
By (\ref{inverse of Dst})
$$I^{t'}_{s'}v =(1-|y|^{2})^{t'}D^{t'}_{s'}v =\frac{V_{s-t'}}{V_{s+t}}(1-|y|^{2})^{t'}\overline{\mathcal{L}_{x}\big(R_s(x,y)\big)}.$$ We conclude
\begin{equation*}
\mathcal{L}(u)=\int_{\mathbb{B}}  I^{t}_{s}u(y)\overline{I^{t'}_{s'}v(y)} \, d\nu_{\rho+\beta}(y).
\end{equation*}
Finally, it follows from (\ref{operator-L2}) that
\begin{equation*}
|I^{t'}_{s'}v(y)|\lesssim \frac{\|\mathcal{L}\|}{(1-|y|^{2})^{\beta}},
\end{equation*}
which shows $v \in b^\infty_\beta$ and $\|v\|_{b^{\infty}_\beta} \lesssim \| \mathcal{L} \|$.

\end{proof}

\subsection{Atomic Decomposition}\label{subsection-Atomic}
In this subsection we will prove Theorem \ref{atomicbesov}.

When $\alpha=0$ and $0<p<1$, Coifman-Rochberg theorem states that there exists a sequence $(x_m)$ of points of $\mathbb{B}$ with the following property: Let $s>n(1/p-1)$.

\begin{enumerate}
    \item[(i)] For every  $u\in b^{p}_{0}$, there exists $(\lambda_{m})\in \ell^{p}$ such that
\begin{equation}\label{atbesov}
u(x)=\sum_{m=1}^{\infty} \lambda_{m}(1-|x_m|^{2})^{n+s-n/p} R_{s}(x,x_m)
\end{equation}
and  $\|\lambda_{m}\|_{\ell^{p}}\lesssim \|u\|_{b^{p}_{0}}$.
    \item[(ii)]  For every $(\lambda_{m})\in \ell^{p}$, the function $u$ defined in (\ref{atbesov}) is in $b^{p}_{0}$ and
$ \|u\|_{b^{p}_{0}} \lesssim\|\lambda_{m}\|_{\ell^{p}}$.
  \end{enumerate}

 By using Proposition \ref{Proposition-The map Dst} we extend this theorem to all $\alpha\in \mathbb{R}$.
\begin{proof}[Proof of Theorem \ref{atomicbesov}]
We begin with proving part (ii). We first show that the series in  (\ref{atbesove}) converges absolutely and uniformly on compact subsets of $\mathbb{B}$. If $K \subset \mathbb{B}$ is compact, then $R_{s}(x,x_m)\lesssim 1$, $\forall x \in K$ by Lemma \ref{Lemma-Kernel-Estimate}. Also, since
$n+s-(n+\alpha)/p>0$, $(1-|x_m|^{2})^{n+s-(n+\alpha)/p}\leq 1$. Therefore, for $ x \in K$,
 \begin{equation}\label{atomicbesov-eq1}
\sum_{m=1}^{\infty} |\lambda_{m}|(1-|x_m|^{2})^{n+s-(n+\alpha)/p} |R_{s}(x,x_m)|\lesssim \sum_{m=1}^{\infty} |\lambda_{m}|\leq \|\lambda_{m}\|_{\ell^{1}}
\end{equation}
and $u$ defined in (\ref{atbesove}) is in $h(\mathbb{B})$. To see that $u\in b^{p}_{\alpha}$, pick $t$ such that $\alpha+pt>-1$. By uniform
convergence on compact subsets and Lemma \ref{Lemma-cont-Dst}, we can push $D_{s}^{t}$ into the series and using (\ref{Dst - Rs}) obtain
 \begin{equation*}
D_{s}^{t}u(x)=\sum_{m=1}^{\infty} \lambda_{m}(1-|x_m|^{2})^{n+s-(n+\alpha)/p} R_{s+t}(x,x_m).
\end{equation*}
Using that $0< p<1$,
\begin{align*}
\|u\|^{p}_{b^{p}_{\alpha}}&\sim \|D_{s}^{t}u\|^{p}_{b^{p}_{\alpha+pt}}=\int_{\mathbb{B}}\left|\sum_{m=1}^{\infty} \lambda_{m}(1-|x_{m}|^{2})^{n+s-(n+\alpha)/p} R_{s+t}(x,x_m)\right|^{p} d\nu_{\alpha+pt}(x)\\
&\lesssim \sum_{m=1}^{\infty} |\lambda_{m}|^{p}(1-|x_m|^{2})^{p(n+s)-(n+\alpha)}\int_{\mathbb{B}} |R_{s+t}(x,x_m)|^{p}(1-|x|^{2})^{\alpha+pt}
 d\nu(x).
\end{align*}
Estimating the  integral by Lemma \ref{norm-kernel} (with $p(n+s+t)-(n+\alpha+pt)>0$) , we obtain
$\|u\|_{b^{p}_{\alpha}}\lesssim \|\lambda_{m}\|_{\ell^{p}}$.

We now prove part (i). Let $(x_m)$ be a sequence as asserted by Coifman-Rochberg theorem. By Proposition \ref{Proposition-The map Dst},
$D_{s}^{-\alpha/p}u\in b^{p}_{0}$ and $\|D_{s}^{-\alpha/p}u\|_{b^{p}_{0}}\sim \|u\|_{b^{p}_{\alpha}}$. We apply Coifman-Rochberg theorem for
$b^{p}_{0}$, replacing $s$ with $s-\alpha/p$. There exists $(\lambda_{m})\in\ell^{p}$ such that
\begin{equation*}
D_{s}^{-\alpha/p}u(x)=\sum_{m=1}^{\infty} \lambda_{m}(1-|x_m|^{2})^{n+s-(n+\alpha)/p} R_{s-\alpha/p}(x,x_m)
\end{equation*}
and $\|\lambda_{m}\|_{\ell^{p}}\lesssim \|D_{s}^{-\alpha/p}u\|_{b^{p}_{0}}\sim \|u\|_{b^{p}_{\alpha}}$.

Apply $D_{s-\alpha/p}^{\alpha/p}$ to both sides. By (\ref{inverse of Dst}) on the left side we get $u(x)$. On the right, as in the proof of part (ii),
by uniform convergence on compact subsets we can push $D_{s-\alpha/p}^{\alpha/p}$ into the series and by (\ref{Dst - Rs}) this leads to
\begin{equation*}
u(x)=\sum_{m=1}^{\infty} \lambda_{m}(1-|x_m|^{2})^{n+s-(n+\alpha)/p} R_{s}(x,x_m).
\end{equation*}
This finishes the proof.
\end{proof}

\bibliographystyle{amsalpha}

\end{document}